\documentclass[qian]{imsart}


\usepackage{amssymb}       
\usepackage{amsfonts}      

\usepackage{lmodern}        
\usepackage{microtype}      
\usepackage{booktabs}       
\usepackage{enumitem}       
\usepackage{bm}             
\usepackage[dvipsnames]{xcolor} 

\usepackage{natbib} 

\usepackage{mathtools}      

\usepackage{graphicx}      
\usepackage{caption}
\usepackage{subcaption}     
\usepackage{tabularx}       
\usepackage{adjustbox}      
\usepackage{pdflscape}      

\usepackage[ruled]{algorithm2e}

\usepackage{tikz}
\usetikzlibrary{shapes,arrows,positioning,fit,quotes}
\usepackage{pgfplots}
\pgfplotsset{compat=1.18}


\usepackage[
    colorlinks,
    linkcolor=blue,
    citecolor=blue,
    urlcolor=blue,
    implicit=true,
    breaklinks=true
]{hyperref}

\usepackage{cleveref}

\newcommand{\R}{\mathbb{R}}
\newcommand{\Prob}{\mathbb{P}}
\newcommand{\E}{\mathbb{E}}
\newcommand{\Fcal}{\mathcal{F}}
\newcommand{\Pcal}{\mathcal{P}}
\newcommand{\cEneutheta}{\mathcal{E}^{\theta}}
\newcommand{\dd}{\mathrm{d}}
\newcommand{\norm}[1]{\left\lVert#1\right\rVert}
\newcommand{\abs}[1]{\left\lvert#1\right\rvert}
\newcommand{\scpr}[2]{\left\langle #1, #2 \right\rangle}
\newcommand{\wass}{\mathcal{W}_2}
\newcommand{\Law}{\mathrm{Law}}

\theoremstyle{plain}
\newtheorem{theorem}{Theorem}[section]
\newtheorem{proposition}[theorem]{Proposition}

\theoremstyle{definition}
\newtheorem{definition}[theorem]{Definition}
\newtheorem{assumption}[theorem]{Assumption}
\newtheorem{remark}[theorem]{Remark}

\begin{document}

\begin{frontmatter}

\title{Neural Expectation Operators}
\runtitle{Neural Expectation Operators}
\author{Qian Qi}
\thanks{Corresponding author. Peking University, Beijing 100871, China. Email: \href{mailto:qiqian@pku.edu.cn}{qiqian@pku.edu.cn}}

\begin{abstract}
This paper introduces \textbf{Measure Learning}, a paradigm for modeling ambiguity via non-linear expectations. We define Neural Expectation Operators as solutions to Backward Stochastic Differential Equations (BSDEs) whose drivers are parameterized by neural networks. The main mathematical contribution is a rigorous well-posedness theorem for BSDEs whose drivers satisfy a local Lipschitz condition in the state variable $y$ and quadratic growth in its martingale component $z$. This result circumvents the classical global Lipschitz assumption, is applicable to common neural network architectures (e.g., with ReLU activations), and holds for exponentially integrable terminal data, which is the sharp condition for this setting. Our primary innovation is to build a constructive bridge between the abstract, and often restrictive, assumptions of the deep theory of quadratic BSDEs and the world of machine learning, demonstrating that these conditions can be met by concrete, verifiable neural network designs. We provide constructive methods for enforcing key axiomatic properties, such as convexity, by architectural design. The theory is extended to the analysis of fully coupled Forward-Backward SDE systems and to the asymptotic analysis of large interacting particle systems, for which we establish both a Law of Large Numbers (propagation of chaos) and a Central Limit Theorem. This work provides the foundational mathematical framework for data-driven modeling under ambiguity.
\end{abstract}

\begin{keyword}[class=MSC2020]
\kwd[Primary ]{60H10}
\kwd{93E20}
\kwd[; secondary ]{60K35}
\kwd{91G80}
\kwd{35K59}
\kwd{68T07}
\end{keyword}

\begin{keyword}
\kwd{Backward Stochastic Differential Equations}
\kwd{Non-linear Expectation}
\kwd{Neural Networks}
\kwd{Mean-Field Systems}
\kwd{Propagation of Chaos}
\kwd{Stochastic Control}
\end{keyword}
\end{frontmatter}

\section{Introduction}

The classical theory of stochastic analysis, which provides the mathematical bedrock for modeling dynamic systems under uncertainty, is built upon the specification of a single, definitive probability measure $\mathbb{P}$. While this paradigm has proven immensely powerful, it rests on the idealization of a perfectly known model of the world, an assumption that is often untenable in practice. In disciplines ranging from mathematical finance and economics to robust control, agents and modelers are confronted with ambiguity, or Knightian uncertainty, where they must contend not with a single measure, but with a set of plausible measures, $\mathcal{P}$, with no single candidate being definitively correct.

The theory of non-linear expectations, particularly the framework of Backward Stochastic Differential Equations (BSDEs), provides the canonical mathematical language for addressing this challenge (e.g., \cite{ElKarouiPengQuenez1997, PardouxPeng1990, Peng2010}). A non-linear expectation $\mathcal{E}^f$, induced by a BSDE with a driver function $f$, admits a dual representation as an extremum of linear expectations taken over a family of probability measures $\mathcal{P}_f$ related to $\Prob$ via Girsanov's theorem. The driver $f$ is therefore not merely a technical component of an equation; it is a compact and powerful representation of the very structure of ambiguity. For instance, a driver with quadratic growth in the martingale component is intimately linked to models of risk and ambiguity aversion, as established in the seminal work of \cite{Kobylanski2000}.

It is at this juncture that we propose a new paradigm: \textbf{Measure Learning}. Instead of postulating a fixed functional form for the driver $f$, which corresponds to a pre-supposed class of models, we aim to learn the structure of ambiguity itself from data. We introduce and establish the mathematical foundations for a learnable class of non-linear expectations, which we term \textbf{Neural Expectation Operators}. We construct these operators by parameterizing the driver of a BSDE with a neural network, $f_\theta$, where $\theta$ represents the network's parameters. The resulting operator, $\cEneutheta$, allows us to frame model specification as a learning problem: by training the network, we identify a data-driven model of ambiguity from a vast and flexible function class.

The central mathematical difficulty is that common neural network architectures (e.g., those with ReLU or GeLU activations) fail to satisfy the global Lipschitz continuity assumption required by the classical well-posedness theory of \cite{PardouxPeng1990}. Our approach is to leverage the more sophisticated theory of quadratic BSDEs. The main contributions of this paper, which lay the mathematical foundation for Measure Learning, are as follows:

\begin{enumerate}[label=(\roman*)]
    \item \textbf{Well-Posedness for Neural Network Drivers via Quadratic BSDE Theory:} Our central mathematical result establishes the well-posedness of Neural BSDEs by adapting the deep theory of BSDEs with quadratic growth in the martingale component $z$ (cf. \cite{Kobylanski2000}). We prove a rigorous well-posedness theorem (Theorem \ref{thm:wellposedness_quadratic}) for drivers that satisfy a \textit{uniform local Lipschitz condition} in the state variable $y$ and at most quadratic growth in $z$. This result holds for exponentially integrable terminal data, which is the sharp integrability condition known for this setting. A key structural condition for our proof, which relies on the powerful comparison principles of the theory, is the \textit{monotonicity} of the driver with respect to $y$. We explicitly acknowledge this as a significant assumption, representing a trade-off between architectural flexibility and the requirements of the current proof techniques for quadratic BSDEs.

    \item \textbf{A Constructive Bridge to Machine Learning:} The primary innovation of this work is to build a constructive and rigorous bridge between the abstract, and often restrictive, assumptions of quadratic BSDE theory and the world of machine learning. We provide novel, constructive proofs that these abstract conditions can be met by concrete neural network designs (Proposition \ref{prop:architectures}). We demonstrate that properties such as monotonicity and the uniform local Lipschitz condition are not merely theoretical artifacts but can be enforced by architectural choices and verifiable weight constraints. We further show that fundamental economic axioms, such as the convexity of the expectation operator (implying risk aversion), can be guaranteed by employing specific architectures like Input-Convex Neural Networks (ICNNs, \cite{AmosEtAl2017ICNN}), as established in Proposition \ref{prop:convexity_enforcement}.

    \item \textbf{Applicability to Coupled and Large-Scale Systems:} We demonstrate the robustness and wide applicability of our framework by showing that our class of neural drivers can be integrated into established, powerful theories for more complex systems. First, we show that our drivers satisfy the structural conditions required by the theory of \cite{Delarue2002} for fully coupled Forward-Backward SDE systems, thereby establishing their well-posedness (Theorem \ref{thm:fbsde_wellposedness}). Second, we apply our framework to the asymptotic analysis of large interacting particle systems, following the methodology of \cite{CarmonaDelarue2018}. We prove both a Law of Large Numbers (Propagation of Chaos, Theorem \ref{thm:lln_mckean_vlasov}) and a Central Limit Theorem (Theorem \ref{thm:clt}) that characterizes the fluctuations around the mean-field limit.
\end{enumerate}

This work provides the essential mathematical foundation for \textbf{Measure Learning}, a new research agenda for analyzing dynamic models where the structure of model uncertainty itself is a learnable object. It thereby serves as a rigorous bridge between the classical theory of stochastic processes under ambiguity and modern methodologies in machine learning.

\section{Formal Framework}
\label{sec:formal_framework}

We begin by establishing the precise mathematical setting for our analysis. This section introduces the filtered probability space and the standard Banach spaces of stochastic processes that form the bedrock of modern BSDE theory. The careful choice and definition of these spaces are not mere technicalities; they are fundamental to ensuring the well-posedness of the equations under study, particularly in the non-standard setting of drivers with superlinear growth.

Let $T > 0$ be a fixed and finite time horizon. We establish our framework on a complete probability space $(\Omega, \Fcal, \Prob)$, which is assumed to be rich enough to support a standard $d$-dimensional Brownian motion $W = (W_t)_{t \in [0,T]}$. The filtration $\mathbb{F} = (\Fcal_t)_{t \in [0,T]}$ is the natural filtration generated by $W$, augmented by the set of all $\Prob$-null sets in $\Fcal$, so that it satisfies the usual conditions of right-continuity and completeness. All stochastic processes are defined on this filtered probability space.

Our analysis relies on a standard set of Banach spaces for stochastic processes. Let $k, m \in \mathbb{N}$ be positive integers. We define these spaces with precise norms, as their interplay is fundamental to the theory of BSDEs.

\begin{itemize}
    \item $L^p(\Omega, \Fcal_T, \Prob; \R^k)$: For $p \in [1, \infty)$, this is the Banach space of $\Fcal_T$-measurable, $\R^k$-valued random variables $\xi$ for which the norm $\norm{\xi}_{L^p} \coloneqq (\E[\norm{\xi}^p])^{1/p}$ is finite. This space is the natural domain for the terminal conditions of the BSDE. For $p=\infty$, $L^\infty(\Omega, \Fcal_T, \Prob; \R^k)$ is the space of essentially bounded random variables with the norm $\norm{\xi}_{L^\infty} \coloneqq \mathrm{ess\,sup}_{\omega \in \Omega} \norm{\xi(\omega)}$.

    \item $\mathcal{S}^p([0,T]; \R^k)$: For $p \in [1, \infty)$, this is the space of continuous, $\mathbb{F}$-adapted processes $Y: [0,T] \times \Omega \to \R^k$ for which the norm $\norm{Y}_{\mathcal{S}^p} \coloneqq \left(\E\left[\sup_{t \in [0,T]} \norm{Y_t}^p\right]\right)^{1/p}$ is finite. This space is essential for the solution's first component, $Y$, as it captures the dual requirement of pathwise continuity and integrability.

    \item $\mathcal{H}^p([0,T]; \R^{k \times m})$: For $p \in [1, \infty)$, this is the space of $\mathbb{F}$-progressively measurable processes $Z: [0,T] \times \Omega \to \R^{k \times m}$ for which the norm $\norm{Z}_{\mathcal{H}^p} \coloneqq \left(\E\left[\left(\int_0^T \norm{Z_t}^2 \dd t\right)^{p/2}\right]\right)^{1/p}$ is finite. This space is the natural domain for the integrand with respect to Brownian motion. The condition $Z \in \mathcal{H}^2([0,T]; \R^{k \times m})$ is necessary and sufficient to ensure that the Itô stochastic integral $\int_0^\cdot Z_s \dd W_s$ is a square-integrable martingale, which is the cornerstone of classical BSDE theory.

    \item $\mathcal{H}^2_{\mathrm{BMO}}([0,T]; \R^{k \times m})$: This is the subspace of $\mathcal{H}^2([0,T]; \R^{k \times m})$ consisting of processes for which the associated stochastic integral is a martingale in the space of Bounded Mean Oscillation (BMO). A continuous local martingale $M_t = \int_0^t Z_s \dd W_s$ is in BMO if its conditional quadratic variation is uniformly bounded. Formally, $Z \in \mathcal{H}^2_{\mathrm{BMO}}([0,T]; \R^{k \times m})$ if its BMO-norm is finite:
    \[ \norm{Z}_{\mathrm{BMO}} \coloneqq \sup_{\tau} \norm{ \left(\E\left[\int_\tau^T \norm{Z_s}^2 \dd s \bigg| \Fcal_\tau\right]\right)^{1/2} }_{L^\infty} < \infty, \]
    where the supremum is taken over all $\mathbb{F}$-stopping times $\tau$ with values in $[0,T]$.
\end{itemize}

Throughout this paper, we use $\norm{\cdot}$ to denote the standard Euclidean norm for vectors and the Frobenius norm for matrices. The inner product is denoted by $\scpr{\cdot}{\cdot}$. We adopt the convention that $\nabla_x u$ denotes the gradient (a column vector) of a scalar function $u(x)$, while $D_x v$ denotes the Jacobian matrix of a vector-valued function $v(x)$.

\begin{remark}[The Central Role of BMO in Quadratic BSDEs]
\label{rem:bmo_centrality}
The BMO space is not merely a technical refinement; it is the natural functional setting for the martingale component of BSDEs with drivers that exhibit quadratic growth in $z$. Its importance stems from a celebrated result by Kazamaki, which states that the Doléans-Dade exponential $\mathcal{E}(M)$ of a BMO-martingale $M$ is a uniformly integrable martingale. This property is the key mechanism that allows for a change of probability measure via the Girsanov theorem. Such a change of measure can be used to linearize the quadratic term in the BSDE's driver, a fundamental technique used in the original well-posedness proofs by \cite{Kobylanski2000}. The fact that a bounded solution $Y \in \mathcal{S}^\infty([0,T]; \R)$ forces the corresponding process $Z$ to lie in $\mathcal{H}^2_{\mathrm{BMO}}([0,T]; \R^{1 \times d})$ is the cornerstone of the entire analysis of quadratic BSDEs and, consequently, of our framework.
\end{remark}

We consider a system where an $n$-dimensional, $\mathbb{F}$-adapted continuous process $(X_s)_{s \in [t,T]}$, referred to as the \textit{forward state process}, is given. The central object of this paper is the solution to a Backward Stochastic Differential Equation (BSDE), parameterized by a neural network.

\begin{definition}[Neural Backward Stochastic Differential Equation]
\label{def:neural_bsde}
Let $t \in [0,T]$ be an initial time. A Neural BSDE is an equation for a pair of $\mathbb{F}$-adapted processes $(Y,Z)$ taking values in $\R \times \R^{1 \times d}$, given by the integral form for $s \in [t,T]$:
\begin{equation}
\label{eq:bsde_for_eneu}
    Y_s = \xi + \int_s^T f_\theta(r, X_r, Y_r, Z_r) \dd r - \int_s^T Z_r \dd W_r.
\end{equation}
Equivalently, in differential form:
\begin{equation}
\label{eq:bsde_for_eneu_diff}
    - \dd Y_s = f_\theta(s, X_s, Y_s, Z_s) \dd s - Z_s \dd W_s, \quad \text{with terminal condition } Y_T = \xi.
\end{equation}
Here, the components are:
\begin{itemize}
    \item $\xi$: The \textit{terminal condition}, an $\Fcal_T$-measurable, real-valued random variable.
    \item $f_\theta$: The \textit{driver} (or \textit{generator}), a function $f_\theta: [0,T] \times \Omega \times \R \times \R^{1 \times d} \to \R$, which we assume to be parameterized by a neural network with parameters $\theta \in \Theta$. The dependence on $\omega$ may arise through the forward process $X_s(\omega)$.
    \item $(Y,Z)$: The \textit{solution}, where $Y$ is a continuous scalar process and $Z$ is a predictable, $1 \times d$ row-vector-valued process.
\end{itemize}
A pair of processes $(Y,Z)$ is called a solution to the BSDE \eqref{eq:bsde_for_eneu} if it satisfies the equation and belongs to a suitable space of adapted processes. The central goal of Section \ref{sec:wellposedness} is to establish the precise function space for the solution pair $(Y,Z)$ (e.g., $\mathcal{S}^p \times \mathcal{H}^p$ or $\mathcal{S}^\infty \times \mathcal{H}^2_{\mathrm{BMO}}$) and prove existence and uniqueness, based on the regularity properties of the driver $f_\theta$ and the terminal condition $\xi$.
\end{definition}

\begin{remark}[On the Terminology for the Process $Z$]
\label{rem:z_terminology}
In this paper, we refer to the process $Z$ as the control process. This terminology is inherited from the stochastic control literature, where the BSDE often represents the value function of a control problem and $Z$ is related to the optimal control via the non-linear Feynman-Kac formula (see Proposition \ref{prop:feynman_kac}). From a purely mathematical perspective, however, $Z$ is the integrand in the martingale representation theorem for the process $Y$. It is therefore often referred to as the martingale component or martingale part of the solution. We will use control process when emphasizing the connection to control and ambiguity, and martingale component when focusing on the process's structural properties, but the reader should understand they refer to the same object $Z$.
\end{remark}

\section{Well-Posedness of the Neural BSDE}
\label{sec:wellposedness}

This section lays the mathematical cornerstone for the entire theory of Neural Expectations. Our objective is to establish the existence and uniqueness of solutions to the defining Backward Stochastic Differential Equation (BSDE) under a set of assumptions that are both analytically powerful and, crucially, achievable by modern neural network architectures.

The central challenge, and our main contribution in this section, is to move beyond the classical framework of globally Lipschitz continuous drivers. Such a condition, while convenient, is fundamentally incompatible with the expressive power of many common network architectures, including those with ReLU activations and superlinear growth. We instead leverage the sophisticated theory of BSDEs with quadratic growth in the martingale component, $z$, pioneered by \cite{Kobylanski2000}. Our primary innovation is not to invent a new proof technique for such BSDEs, but rather to build a constructive bridge between the abstract, and often restrictive, assumptions of this deep theory and the world of machine learning. We demonstrate that these conditions can be met by concrete, verifiable neural network designs.

We first articulate the precise regularity and structural conditions required for our analysis and demonstrate their feasibility through constructive architectural designs. We then present the main well-posedness result, Theorem \ref{thm:wellposedness_quadratic}, providing a complete and rigorous proof that adapts the seminal methodology of \cite{Kobylanski2000} to our setting.

\subsection{Assumptions and Architectural Feasibility}
\label{sec:regularity}

The mathematical viability of our theory hinges on a careful choice of assumptions for the BSDE driver $f_\theta$. The following conditions are tailored to the theory of quadratic BSDEs, which provides the necessary analytical tools to handle drivers with superlinear growth.

\begin{assumption}(Forward Process and Terminal Condition).
\label{ass:fwd_terminal_regularity}
\begin{enumerate}[label=(\roman*)]
    \item The process $(X_s)_{s \in [0,T]}$ is a given continuous, $(\Fcal_t)$-adapted process.
    \item The terminal condition $\xi$ is an $\Fcal_T$-measurable random variable satisfying the exponential integrability condition:
    \[ \E[\exp(\beta_0 \abs{\xi})] < \infty \quad \text{for some constant } \beta_0 > 0. \]
    \item Let $p \ge 1$ be the growth exponent from Assumption \ref{ass:f_theta_regularity_general}. We assume there exists a constant $\lambda_0 > 0$ such that the driver's random component has finite exponential moments:
    \[ \E\left[\exp\left(\lambda_0 \int_0^T \sup_{y,z} \frac{\abs{f_\theta(s, X_s, y, z)}}{1+\abs{y}+\norm{z}^2} \dd s\right)\right] < \infty, \]
    a condition satisfied if, for instance, $\E\left[\exp\left(\lambda_0 \int_0^T (1+\norm{X_s}^p) \dd s\right)\right] < \infty$.
\end{enumerate}
\end{assumption}

\begin{remark}[On the Integrability of the Forward Process]
\label{rem:fwd_integrability_revised}
The exponential integrability on the time integral of the random component of the driver in Assumption \ref{ass:fwd_terminal_regularity}(iii) is a strong requirement. However, it is essential for the proof strategy of our main well-posedness result, Theorem \ref{thm:wellposedness_quadratic}. A key step in the analysis, originating in the work of \cite{Kobylanski2000}, is to show that a BSDE with quadratic growth in $z$ can have a bounded solution $Y \in \mathcal{S}^\infty([0,T]; \R)$ even if the terminal condition $\xi$ is unbounded, provided $\xi$ and the random part of the driver both possess finite exponential moments. Our condition (iii) ensures precisely this requirement, allowing us to invoke results from \cite{Kobylanski2000} (see also \cite{BriandEtAl2003}) to guarantee the existence of a bounded dominating solution. This dominating solution is the cornerstone of the a priori estimates that underpin both existence and uniqueness.
\end{remark}

\begin{assumption}[Regularity and Growth of the NN Driver $f_\theta$]
\label{ass:f_theta_regularity_general}
The neural network function $f_\theta: [0,T] \times \Omega \times \R \times \R^{1 \times d} \to \R$ is $(\Fcal_t)$-progressively measurable via its dependence on $X_t$. For each fixed $\theta$, it satisfies:
\begin{enumerate}[label=(\roman*)]
    \item \textbf{Continuity:} The function $(t,x,y,z) \mapsto f_\theta(t,x,y,z)$ is continuous for each fixed $\omega \in \Omega$.
    \item \textbf{Quadratic Growth in $z$:} There exist constants $K > 0$, $\alpha > 0$ and $p \ge 1$ such that for all $(t,x,y,z)$:
    \[ \abs{f_\theta(t,x,y,z)} \le K(1 + \norm{x}^p + \abs{y}) + \frac{\alpha}{2}\norm{z}^2. \]
    \item \textbf{Uniform Local Lipschitz Condition in $y$:} For any $R>0$, there exists a constant $L_R > 0$ such that for all $(t,x,z)$ and all $y_1, y_2 \in [-R, R]$, we have
    \[ \abs{f_\theta(t,x,y_1,z) - f_\theta(t,x,y_2,z)} \le L_R\abs{y_1-y_2}. \]
    The uniformity is critical: the constant $L_R$ depends only on the radius $R$, not on $(t,x,z)$.
    \item \textbf{Monotonicity in $y$:} The function $y \mapsto f_\theta(t,x,y,z)$ is non-increasing for all fixed $(t,x,z)$.
\end{enumerate}
\end{assumption}

\begin{remark}[On the Monotonicity Assumption]
\label{rem:monotonicity_tradeoff_revised}
The monotonicity condition (Assumption \ref{ass:f_theta_regularity_general}(iv)) is a strong structural requirement. It is, however, the crucial ingredient that enables the use of powerful comparison theorems for BSDEs, which are fundamental to the entire analysis of Theorem \ref{thm:wellposedness_quadratic}. As we demonstrate in Proposition \ref{prop:architectures}(c), this property is not merely a theoretical convenience; it can be guaranteed by construction via weight constraints. While enforcing such constraints may limit the full expressive power of the network, it is a necessary trade-off for obtaining a well-posed problem using this specific, and very powerful, proof technique. Establishing well-posedness without such a condition remains a challenging open problem, not just in our neural network setting but in the general theory of quadratic BSDEs.
\end{remark}

\subsection{Architectural Designs Satisfying the Assumptions}

The conditions in Assumption \ref{ass:f_theta_regularity_general} are strong, but they are not prohibitive. We now provide constructive and verifiable network designs that satisfy these structural properties.

\begin{proposition}[Architectures for Key Structural Properties]
\label{prop:architectures}
The conditions of Assumption \ref{ass:f_theta_regularity_general} can be satisfied by the following designs.
\begin{enumerate}[label=(\alph*)]
    \item \textbf{Separable Architecture:} Let $f_\theta(t,x,y,z) = N_1(t,x,z) + N_2(y)$, where $N_1$ is any network satisfying the quadratic growth condition in $z$ and $N_2$ is a 1D network that is locally Lipschitz. This satisfies (iii). If $N_2$ is further constrained to be non-increasing, it also satisfies (iv).

    \item \textbf{Bounded Interaction Architecture:} Let $f_\theta(t,x,y,z) = N_1(t,x,z) + N_2(t,x,z) \cdot N_3(y)$, where $N_1$ satisfies the required growth, $N_3$ is a locally Lipschitz network, and the output of $N_2$ is bounded, i.e., $\sup_{t,x,z} \abs{N_2(t,x,z)} \le M_2 < \infty$. This architecture satisfies (iii).

    \item \textbf{Monotonicity by Construction:} Let $f_\theta$ be an $L$-layer feed-forward network with non-decreasing, continuously differentiable activation functions $\sigma_k$. Let the output layer be linear. The network output can be made non-increasing with respect to its input $y$ by enforcing the following sign constraints on its weights: (1) weights connecting $y$ to the first hidden layer are non-positive; (2) all weights in subsequent hidden layers ($k>1$) are non-negative; (3) weights in the final linear output layer are non-negative.
\end{enumerate}
\end{proposition}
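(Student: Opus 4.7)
The plan is to verify, for each of the three architectures in (a)--(c), the properties of Assumption \ref{ass:f_theta_regularity_general} that the proposition claims they deliver. Parts (a) and (b) are direct structural verifications that exploit the additive/bilinear decomposition of $f_\theta$ into factors depending separately on $(t,x,z)$ and on $y$; part (c) is a chain-rule induction on the depth of a feed-forward network with sign-constrained weights.

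For part (a), writing $f_\theta(t,x,y,z) = N_1(t,x,z) + N_2(y)$, the growth bound (ii) is inherited from $N_1$, with the scalar $N_2(y)$ absorbed into the $K(1+\abs{y})$ term (standard architectures built from activations of at most linear growth, e.g.\ ReLU or identity, produce a 1D network with affine growth at infinity). For the uniform local Lipschitz condition (iii), the difference $f_\theta(t,x,y_1,z) - f_\theta(t,x,y_2,z) = N_2(y_1) - N_2(y_2)$ is independent of $(t,x,z)$, so the local Lipschitz constant $L_R$ of $N_2$ on $[-R,R]$ gives a bound that is automatically uniform in $(t,x,z)$. Monotonicity (iv) reduces to the monotonicity of $N_2$, which is itself enforced by the construction of part (c) applied to a scalar-input network. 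For part (b), the triangle inequality together with $\abs{N_2(t,x,z)} \le M_2$ and the at-most-linear growth of $N_3$ yields (ii); for (iii), the factorization $\abs{f_\theta(t,x,y_1,z) - f_\theta(t,x,y_2,z)} = \abs{N_2(t,x,z)}\cdot\abs{N_3(y_1) - N_3(y_2)} \le M_2 L_R \abs{y_1 - y_2}$ gives the uniform local Lipschitz bound, with the boundedness of $N_2$ being precisely what prevents the Lipschitz constant from blowing up in $(t,x,z)$.

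For part (c), I induct on the hidden-layer index $k$ with the claim that every component of $\partial h^{(k)}/\partial y$ is non-positive, where $h^{(k)}$ denotes the $k$-th hidden activation. At $k = 1$, the chain rule gives $\partial h^{(1)}/\partial y = \mathrm{diag}(\sigma_1'(\cdot))\,(W_1)_{\cdot, y}$, where $(W_1)_{\cdot, y}$ is the column of $W_1$ associated with the input coordinate $y$; since $\sigma_1'(\cdot) \ge 0$ by the non-decreasing activation hypothesis and $(W_1)_{\cdot, y} \le 0$ by constraint (1), the base case holds componentwise. For $k \ge 2$, one computes $\partial h^{(k)}/\partial y = \mathrm{diag}(\sigma_k'(\cdot))\, W_k\, \partial h^{(k-1)}/\partial y$; since $\sigma_k'(\cdot) \ge 0$, $W_k$ has non-negative entries by constraint (2), and $\partial h^{(k-1)}/\partial y$ is non-positive by the inductive hypothesis, the product remains non-positive coordinatewise. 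The linear output layer with non-negative weights then gives $\partial f_\theta/\partial y = W_L\, \partial h^{(L-1)}/\partial y \le 0$, which is exactly the monotonicity property (iv).

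The main obstacle is not conceptual but rather bookkeeping discipline: in part (c) one must ensure that the network architecture contains no skip connection feeding $y$ directly into any layer beyond the first, since otherwise the (unconstrained) $y$-column would re-enter at an interior layer and break the sign propagation. A secondary subtlety, signalled by the $C^1$ assumption on the activations, is that the chain rule above presumes classical differentiability; for non-smooth activations such as ReLU, the identical sign analysis holds on the interior of each linear region, and the monotonicity of $f_\theta$ on all of $\R$ then follows by a routine approximation or Clarke-subdifferential argument, so the qualitative conclusion is robust to this weakening.
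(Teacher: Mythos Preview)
Your proposal is correct and follows essentially the same approach as the paper's proof: parts (a) and (b) reduce the uniform local Lipschitz estimate in $y$ to the local Lipschitz constant of the scalar network ($N_2$ in (a), $N_3$ in (b)), with the boundedness of $N_2$ in (b) playing exactly the role you identify; part (c) is the same chain-rule induction on the layer index, which the paper carries out componentwise where you use the equivalent matrix form. Your additional remarks on the growth bound (ii), the absence of skip connections, and the extension to non-smooth activations via Clarke subdifferentials go slightly beyond what the paper proves but are sound and useful observations.
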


\begin{proof}
\textbf{(a) Separable Architecture:} Let $y_1, y_2 \in [-R,R]$. Since $N_2$ is a 1D network with locally Lipschitz activations (e.g., ReLU, SiLU), it is locally Lipschitz. Thus, there exists a constant $L_{R,2}$ such that $\abs{N_2(y_1)-N_2(y_2)} \le L_{R,2}\abs{y_1-y_2}$. We verify the uniform local Lipschitz condition for $f_\theta$:
\begin{align*}
    \abs{f_\theta(t,x,y_1,z) - f_\theta(t,x,y_2,z)} &= \abs{(N_1(t,x,z) + N_2(y_1)) - (N_1(t,x,z) + N_2(y_2))} \\
    &= \abs{N_2(y_1) - N_2(y_2)} \le L_{R,2}\abs{y_1-y_2}.
\end{align*}
The constant $L_{R,2}$ depends only on $R$, so (iii) is satisfied. Monotonicity (iv) follows directly if $N_2$ is chosen to be non-increasing.

\textbf{(b) Bounded Interaction Architecture:} Let $y_1, y_2 \in [-R,R]$. $N_3$ is locally Lipschitz, so there exists $L_{R,3}$ such that $\abs{N_3(y_1)-N_3(y_2)} \le L_{R,3}\abs{y_1-y_2}$. The network $N_2$ is bounded by $M_2$.
\begin{align*}
    \abs{f_\theta(t,x,y_1,z) - f_\theta(t,x,y_2,z)} &= \abs{N_2(t,x,z)(N_3(y_1) - N_3(y_2))} \\
    &\le \abs{N_2(t,x,z)} \cdot \abs{N_3(y_1) - N_3(y_2)} \le M_2 \cdot L_{R,3} \abs{y_1 - y_2}.
\end{align*}
The constant $L_R \coloneqq M_2 L_{R,3}$ depends only on $R$, so (iii) holds.

\textbf{(c) Monotonicity by Construction:} Let the network input be the vector $u \coloneqq (t,x,y,z)$. Let $a^{(0)} \coloneqq u$. For layer $k \in \{1, \dots, L-1\}$, the pre-activation is $z^{(k)} = W^{(k)} a^{(k-1)} + b^{(k)}$ and the activation is $a^{(k)} = \sigma_k(z^{(k)})$. The final output is $f_\theta = W^{(L)}a^{(L-1)} + b^{(L)}$. We show by induction that $\frac{\partial a_i^{(k)}}{\partial y} \le 0$ for all neurons $i$ and layers $k \in \{1, \dots, L-1\}$.
\textit{Base Case (k=1):} The derivative of the $i$-th neuron's activation in the first hidden layer with respect to $y$ is $\frac{\partial a_i^{(1)}}{\partial y} = \sigma_1'(z_i^{(1)}) \frac{\partial z_i^{(1)}}{\partial y} = \sigma_1'(z_i^{(1)}) W_{iy}^{(1)}$. Since $\sigma_1' \ge 0$ and we constrain $W_{iy}^{(1)} \le 0$, the derivative is non-positive.
\textit{Inductive Step:} Assume for layer $k-1$ that $\frac{\partial a_j^{(k-1)}}{\partial y} \le 0$ for all neurons $j$. For a neuron $i$ in layer $k$, the chain rule gives $\frac{\partial a_i^{(k)}}{\partial y} = \sigma_k'(z_i^{(k)}) \sum_j W_{ij}^{(k)} \frac{\partial a_j^{(k-1)}}{\partial y}$. Since $\sigma_k' \ge 0$, we constrain $W_{ij}^{(k)} \ge 0$ for $k>1$, and by the induction hypothesis, $\frac{\partial a_j^{(k-1)}}{\partial y} \le 0$, each term in the sum is non-positive. Thus, the sum is non-positive, and so is $\frac{\partial a_i^{(k)}}{\partial y}$.
\textit{Conclusion:} The derivative of the final network output with respect to $y$ is $\frac{\partial f_\theta}{\partial y} = \sum_j W_{j}^{(L)} \frac{\partial a_j^{(L-1)}}{\partial y}$. Since the final layer weights $W_j^{(L)}$ are constrained to be non-negative and we have shown $\frac{\partial a_j^{(L-1)}}{\partial y} \le 0$, the final derivative is non-positive. This proves that $f_\theta$ is non-increasing with respect to $y$.
\end{proof}

\subsection{Well-Posedness Results}

We now establish the mathematical well-posedness of the BSDE. We begin with the classical case of a globally Lipschitz driver for context and completeness before proceeding to our main result for drivers with quadratic growth.

\begin{proposition}[Well-Posedness for Globally Lipschitz Drivers]
\label{prop:wellposed_lipschitz_driver}
Let the terminal condition $\xi \in L^2(\Omega, \Fcal_T, \Prob)$. Assume the driver $f_\theta$ satisfies:
\begin{enumerate}[label=(\alph*)]
    \item \textbf{Linear Growth:} $\abs{f_\theta(t,x,y,z)} \le K(1+\norm{x}^p+\abs{y}+\norm{z})$ for some $K>0, p \ge 1$.
    \item \textbf{Uniform Lipschitz Continuity:} $\abs{f_\theta(t,x,y_1,z_1)-f_\theta(t,x,y_2,z_2)} \le L(\abs{y_1-y_2}+\norm{z_1-z_2})$ for some $L>0$.
\end{enumerate}
Assume further that $\E\left[\left(\int_0^T (1+\norm{X_s}^p) \dd s\right)^2\right] < \infty$.
Then the BSDE admits a unique solution $(Y,Z)$ in the space $\mathcal{S}^2([0,T]; \R) \times \mathcal{H}^2([0,T]; \R^{1 \times d})$.
\end{proposition}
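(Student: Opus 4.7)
The strategy is the classical Pardoux--Peng contraction argument, adapted to our slightly more general growth setup where the driver depends on an exogenous forward process $X$ with polynomial growth. The Lipschitz hypothesis (b) in the variables $(y,z)$ is global and uniform in $(t,x)$, so the $x$-dependence is completely frozen once $X$ is plugged in. I would therefore treat $s \mapsto f_\theta(s, X_s, \cdot, \cdot)$ as a random globally Lipschitz driver depending on a parameter, and verify that the ``constant term'' $s \mapsto f_\theta(s, X_s, 0, 0)$ lies in $\mathcal{H}^2([0,T];\R)$. This follows from the linear growth bound (a), the Cauchy--Schwarz / elementary inequality $(1+\|X_s\|^p)^2 \le 2 + 2\|X_s\|^{2p}$, and the integrability hypothesis on $X$, after noting that $\mathbb{E}[\int_0^T \|X_s\|^{2p}\,\dd s] \le T^{1/2}(\mathbb{E}[(\int_0^T(1+\|X_s\|^p)\,\dd s)^2])^{1/2}$ up to constants.

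With this data in hand, the plan is to apply Banach's fixed-point theorem on the Hilbert space $\mathcal{H}^2([0,T];\R) \times \mathcal{H}^2([0,T];\R^{1\times d})$ equipped with the weighted norm
\[ \|(y,z)\|_\beta^2 \coloneqq \E\left[\int_0^T e^{\beta s}\bigl(\abs{y_s}^2 + \norm{z_s}^2\bigr)\dd s\right], \]
for a parameter $\beta > 0$ to be chosen large. Given any $(y,z)$ in this space, I would define the candidate image $(Y,Z) = \Phi(y,z)$ as follows: set $M_t \coloneqq \E[\xi + \int_0^T f_\theta(r, X_r, y_r, z_r)\,\dd r \mid \Fcal_t]$, which by Jensen's inequality and the growth bound is a square-integrable martingale; invoke the martingale representation theorem to obtain a unique $Z \in \mathcal{H}^2$ with $M_t = M_0 + \int_0^t Z_s\,\dd W_s$; and then define $Y_t \coloneqq M_t - \int_0^t f_\theta(r, X_r, y_r, z_r)\,\dd r$, which is continuous, adapted, square-integrable, and satisfies \eqref{eq:bsde_for_eneu} by construction. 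Standard BDG and Doob inequalities then upgrade $Y$ from $\mathcal{H}^2$ to $\mathcal{S}^2$ with an estimate controlled by the $\mathcal{H}^2$-norm of the driver composition.

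The core estimate is obtained by applying It\^o's formula to $e^{\beta t}\abs{\Delta Y_t}^2$, where $(\Delta Y, \Delta Z) = \Phi(y^1,z^1) - \Phi(y^2,z^2)$, using that $\Delta Y_T = 0$ and that the driver difference is bounded by $L(\abs{\Delta y_s} + \norm{\Delta z_s})$. After taking expectations, the stochastic integral vanishes, Young's inequality $2ab \le \varepsilon a^2 + \varepsilon^{-1}b^2$ splits the cross terms, and one arrives at an inequality of the form
\[ \E\!\int_0^T e^{\beta s}\bigl(\beta\abs{\Delta Y_s}^2 + \norm{\Delta Z_s}^2\bigr)\dd s \le C_L \E\!\int_0^T e^{\beta s}\bigl(\abs{\Delta y_s}^2 + \norm{\Delta z_s}^2\bigr)\dd s, \]
where $C_L$ depends only on $L$. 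Choosing $\beta$ large enough so that $\min(\beta, 1)/C_L > 1$ makes $\Phi$ a strict contraction, and Banach's theorem delivers the unique fixed point. Uniqueness in $\mathcal{S}^2 \times \mathcal{H}^2$ and membership of $Y$ in $\mathcal{S}^2$ follow from the same It\^o computation applied to any two solutions, together with BDG.

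The main technical obstacle is essentially bookkeeping: ensuring that the polynomial growth in $x$ does not leak into the contraction constant, which is why I would freeze $X$ first and absorb all its contribution into the a priori $\mathcal{H}^2$-bound on $f_\theta(\cdot, X_\cdot, 0, 0)$; and choosing the weight $\beta$ correctly so that \emph{both} the $\abs{\Delta Y}^2$ and $\norm{\Delta Z}^2$ terms are controlled simultaneously rather than in an iterated fashion. Neither is a genuine difficulty, but the $\beta$-weighting is the one spot where a hasty computation easily gives a contraction only after iterating $\Phi$ twice, which still suffices for uniqueness but is aesthetically inferior.
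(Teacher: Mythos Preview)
Your proposal is correct and follows essentially the same approach as the paper: a Banach fixed-point argument on a $\beta$-weighted space, with the contraction obtained by applying It\^o's formula to $e^{\beta t}\abs{\Delta Y_t}^2$, Young's inequality, and a large choice of $\beta$. The only cosmetic difference is that the paper runs the contraction directly on $\mathcal{S}^2 \times \mathcal{H}^2$ with a $\sup$-weighted norm, while you work on $\mathcal{H}^2 \times \mathcal{H}^2$ and upgrade $Y$ to $\mathcal{S}^2$ afterward via BDG; both variants are standard and the paper itself cites Pardoux--Peng and El~Karoui--Peng--Quenez for the details.
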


\begin{proof}
This is a foundational result in the theory of BSDEs, established via a contraction mapping argument on a suitably weighted space of processes. For a complete proof, we refer the reader to the seminal work of \cite{PardouxPeng1990} or the comprehensive treatment in \cite{ElKarouiPengQuenez1997}. For completeness, we provide a detailed sketch.

The proof uses the Banach fixed-point theorem. For $\beta > 0$, we consider the space $\mathcal{M}^2_\beta = \mathcal{S}^2([0,T]; \R) \times \mathcal{H}^2([0,T]; \R^{1 \times d})$ equipped with the norm $\norm{(Y,Z)}^2_{\beta} \coloneqq \E\left[\sup_{t \in [0,T]} e^{\beta t} \abs{Y_t}^2 + \int_0^T e^{\beta t} \norm{Z_t}^2 \dd t\right]$, which is equivalent to the standard product norm for any fixed $\beta$.

We define a mapping $\Psi: \mathcal{M}^2_\beta \to \mathcal{M}^2_\beta$. For $(y,z) \in \mathcal{M}^2_\beta$, let $(Y, Z) = \Psi(y,z)$ be the unique solution to the linear BSDE: $- \dd Y_s = f_\theta(s, X_s, y_s, z_s) \dd s - Z_s \dd W_s$, with $Y_T = \xi$. Standard linear BSDE theory guarantees that a unique solution $(Y,Z) \in \mathcal{S}^2 \times \mathcal{H}^2$ exists, so the map is well-defined.

The core of the proof is to show that $\Psi$ is a contraction for a sufficiently large $\beta$. Let $(y^1, z^1), (y^2, z^2) \in \mathcal{M}^2_\beta$, and let $(\delta Y, \delta Z) = \Psi(y^1, z^1) - \Psi(y^2, z^2)$ and $(\delta y, \delta z) = (y^1-y^2, z^1-z^2)$. The pair $(\delta Y, \delta Z)$ solves the BSDE with zero terminal condition ($\delta Y_T=0$) and driver $\delta f_s \coloneqq f_\theta(\dots,y^1_s,z^1_s) - f_\theta(\dots,y^2_s,z^2_s)$. Applying Itô's formula to $e^{\beta t}\abs{\delta Y_t}^2$ from $t=0$ to $T$, taking expectations, and using the Lipschitz property of $f_\theta$ yields:
\[
\E\left[ \int_0^T e^{\beta s} \left(\beta \abs{\delta Y_s}^2 + \norm{\delta Z_s}^2\right) \dd s \right] = \E\left[ \int_0^T 2e^{\beta s} \scpr{\delta Y_s}{\delta f_s} \dd s \right] \le \E\left[\int_0^T 2e^{\beta s} L \abs{\delta Y_s}(\abs{\delta y_s} + \norm{\delta z_s}) \dd s \right].
\]
Using Young's inequality ($2ab \le \epsilon a^2 + \epsilon^{-1} b^2$) on the right-hand side, one can find constants $\epsilon$ and $C_L$ such that for a sufficiently large $\beta$, the following inequality holds:
\[
\E\left[ \int_0^T e^{\beta s} \left( \abs{\delta Y_s}^2 + \norm{\delta Z_s}^2 \right) \dd s \right] \le C_L \beta^{-1} \E\left[ \int_0^T e^{\beta s} \left( \abs{\delta y_s}^2 + \norm{\delta z_s}^2 \right) \dd s \right].
\]
A similar estimate for the $\sup$-norm term, using the Burkholder-Davis-Gundy inequality, leads to an overall inequality of the form:
\[
\norm{(\delta Y, \delta Z)}^2_{\beta} \le \frac{C}{\beta} \norm{(\delta y, \delta z)}^2_{\beta},
\]
for some constant $C$ depending on the Lipschitz constant $L$. By choosing $\beta$ large enough such that $C/\beta < 1$, the mapping $\Psi$ becomes a strict contraction on the complete metric space $(\mathcal{M}^2_\beta, \norm{\cdot}_\beta)$. By the Banach fixed-point theorem, a unique fixed point exists, which corresponds to the unique solution of the BSDE.
\end{proof}

\begin{theorem}[Well-Posedness for Quadratic Drivers]
\label{thm:wellposedness_quadratic}
Let Assumptions \ref{ass:fwd_terminal_regularity} and \ref{ass:f_theta_regularity_general} hold. If the exponential integrability constant $\beta_0$ of the terminal condition $\xi$ satisfies the sharp condition $\beta_0 > \alpha$, where $\alpha$ is from the quadratic growth condition, then the BSDE admits a unique adapted solution $(Y,Z)$ in the space $\mathcal{S}^{\infty}([0,T]; \R) \times \mathcal{H}^2_{\mathrm{BMO}}([0,T]; \R^{1 \times d})$.
\end{theorem}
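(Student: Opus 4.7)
The plan is to adapt the seminal monotone-stability methodology of \cite{Kobylanski2000}, extended to exponentially integrable terminal data in the spirit of \cite{BriandEtAl2003}, to the neural driver $f_\theta$. The strategy rests on three pillars: (i) an exponential transformation yielding a priori $\mathcal{S}^\infty$ and $\mathcal{H}^2_{\mathrm{BMO}}$ estimates; (ii) an approximation-and-monotone-convergence argument for existence; and (iii) a Girsanov linearization for uniqueness.

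For the a priori bound, I would apply Itô's formula to $e^{\alpha Y_t}$ and, symmetrically, to $e^{-\alpha Y_t}$ in order to control both signs of $Y$. The quadratic contribution $\frac{\alpha}{2}\norm{Z_t}^2$ coming from Assumption~\ref{ass:f_theta_regularity_general}(ii) combines with the Itô correction $\frac{\alpha^2}{2}e^{\alpha Y_t}\norm{Z_t}^2$ so that the $\norm{Z}^2$ terms cancel exactly, reducing the equation to one with linear growth whose terminal datum $e^{\alpha\xi}$ is integrable precisely because $\beta_0 > \alpha$. Combined with the exponential-moment condition on the random part of the driver in Assumption~\ref{ass:fwd_terminal_regularity}(iii), Doob's maximal inequality then delivers $Y \in \mathcal{S}^\infty$. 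A second application of Itô's formula to $\phi(Y_t) \coloneqq (e^{\gamma Y_t} - 1 - \gamma Y_t)/\gamma^2$, exploiting that $\phi''(y) \ge \alpha\,\phi'(y)$ for $\gamma$ chosen sufficiently large relative to $\norm{Y}_{\mathcal{S}^\infty}$ and $\alpha$, converts the $L^\infty$ bound on $Y$ into a uniform conditional energy estimate for $Z$, i.e.\ $Z \in \mathcal{H}^2_{\mathrm{BMO}}$.

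For existence, I would approximate $f_\theta$ by a sequence $\{f^n\}$ obtained by truncating the $z$-dependence (e.g.\ replacing $z$ by $z\cdot n/\max(n,\norm{z})$) together with $\xi^n \coloneqq (\xi \wedge n) \vee (-n)$. Each $f^n$ is globally Lipschitz, so Proposition~\ref{prop:wellposed_lipschitz_driver} furnishes a unique solution $(Y^n,Z^n)$. The monotonicity in Assumption~\ref{ass:f_theta_regularity_general}(iv) is precisely what activates the BSDE comparison theorem, which allows me to conclude that $\{Y^n\}$ is monotone (under a suitable arrangement of the truncations) and uniformly dominated by the a priori $\mathcal{S}^\infty$ bound from step one. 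Kobylanski's monotone stability theorem then permits passage to the limit through the quadratic nonlinearity $\frac{\alpha}{2}\norm{Z^n}^2$, producing a genuine solution $(Y,Z) \in \mathcal{S}^\infty \times \mathcal{H}^2_{\mathrm{BMO}}$.

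Uniqueness is, I expect, the most delicate step and is the one where the BMO structure becomes indispensable. Given two solutions $(Y^i,Z^i) \in \mathcal{S}^\infty \times \mathcal{H}^2_{\mathrm{BMO}}$, the difference $(\delta Y,\delta Z)$ satisfies a linear BSDE with driver $\alpha_s\,\delta Y_s + \scpr{\beta_s}{\delta Z_s}$, where $\abs{\alpha_s} \le L_R$ with $R \coloneqq \max_i \norm{Y^i}_{\mathcal{S}^\infty}$ by the uniform local Lipschitz property, and $\norm{\beta_s} \le C(\norm{Z^1_s} + \norm{Z^2_s})$ by a secant-line argument on the quadratic $z$-term. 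Because both $Z^i$ lie in $\mathcal{H}^2_{\mathrm{BMO}}$, so does $\beta$; Kazamaki's criterion (recalled in Remark~\ref{rem:bmo_centrality}) then guarantees that $\mathcal{E}(\int_0^\cdot \beta_s \dd W_s)$ is a uniformly integrable martingale, defining an equivalent measure $\mathbb{Q}$ under which the BSDE linearizes completely. Monotonicity in $y$ forces $\alpha_s \le 0$, and Gronwall's lemma under $\mathbb{Q}$ then yields $\delta Y \equiv 0$ and hence $\delta Z \equiv 0$. The principal technical obstacle I anticipate is not any single estimate but the coordination of the approximation scheme in step (ii) so that monotonicity of $\{Y^n\}$, the uniform $\mathcal{S}^\infty$ bound, and the uniform BMO bound on $\{Z^n\}$ all survive simultaneously as $n \to \infty$; this tight interplay among truncation, comparison, and exponential a priori estimates is the technical core of the Kobylanski–Briand–Hu framework and is what makes the proof nontrivial even once the a priori bounds are in hand.
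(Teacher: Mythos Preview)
Your step (i) contains a gap: the exponential transformations $e^{\pm\alpha Y}$ together with Doob's maximal inequality control exponential \emph{moments} of $\sup_t|Y_t|$, not its essential supremum; in the Briand--Hu framework the exponential change delivers precisely the class of solutions with finite exponential moments, and no amount of Doob will upgrade that to an $L^\infty$ bound. The paper obtains the $\mathcal{S}^\infty$ estimate by a different mechanism: it constructs explicit dominating and minorizing quadratic BSDEs (terminal data $\pm|\xi|$, drivers $\pm[K(1+\norm{X_s}^p)+K|y|+\tfrac{\alpha}{2}\norm{z}^2]$), invokes cited results of \cite{Kobylanski2000} and \cite{BriandEtAl2003} to assert that these auxiliary solutions lie in $\mathcal{S}^\infty$, and then sandwiches $Y$ between them via the comparison theorem. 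This is where the monotonicity Assumption~\ref{ass:f_theta_regularity_general}(iv) enters the a priori estimate, not only the existence step as your sketch suggests.

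Your existence and uniqueness steps also diverge from the paper, though there both routes are standard quadratic-BSDE techniques. For existence, you truncate the $z$-argument and invoke Kobylanski's monotone stability; the paper instead truncates only in $y$ (setting $f_k(t,x,y,z)=f_\theta(t,x,\max(-k,\min(y,k)),z)$) together with the terminal datum, keeps the full quadratic growth in $z$, and proves the resulting sequence is \emph{Cauchy} in $\mathcal{S}^2\times\mathcal{H}^2$ via stability estimates rather than monotone convergence---this sidesteps the need to arrange monotonicity of $\{Y^n\}$ in $n$ that you flag as the principal technical obstacle. For uniqueness, you linearize via Girsanov with a BMO drift; the paper short-circuits this by applying the comparison theorem twice (once in each direction, same driver, same terminal datum) to get $Y^1\equiv Y^2$ immediately, and then deduces $Z^1=Z^2$ from the fact that the difference equation forces a continuous local martingale to coincide with a finite-variation process. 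Your programme follows the exponential-change/Girsanov template; the paper leans on comparison throughout.
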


\begin{proof}
The proof is highly structured and follows the sophisticated methodology established in the seminal work of \cite{Kobylanski2000} for BSDEs with quadratic growth. We proceed in two main stages: first, establishing uniqueness and a priori estimates for any potential solution, and second, proving existence via a carefully constructed approximation scheme.

\textbf{Part 1: A Priori Estimates and Uniqueness}

In this part, we assume that a solution $(Y,Z)$ to the BSDE \eqref{eq:bsde_for_eneu} exists in a suitable space (e.g., $\mathcal{S}^2 \times \mathcal{H}^2$) and derive its properties.

\emph{(i) Boundedness of the solution $Y$.}
The first and most critical step is to show that any solution $Y$ must be essentially bounded, i.e., $Y \in \mathcal{S}^{\infty}([0,T]; \R)$. This is achieved through a comparison argument with a dominating BSDE. From Assumption \ref{ass:f_theta_regularity_general}(ii), the driver $f_\theta$ has a quadratic upper bound:
\[ f_\theta(t,x,y,z) \le K(1 + \norm{x}^p + \abs{y}) + \frac{\alpha}{2}\norm{z}^2. \]
Now, consider the following auxiliary quadratic BSDE for a process $(\bar{Y}, \bar{Z})$:
\begin{equation}\label{eq:proof_dom_bsde}
    -\dd\bar{Y}_s = \left( K(1 + \norm{X_s}^p) + K\abs{\bar{Y}_s} + \frac{\alpha}{2}\norm{\bar{Z}_s}^2 \right) \dd s - \bar{Z}_s \dd W_s, \quad \text{with } \bar{Y}_T = \abs{\xi}.
\end{equation}
This BSDE is well-posed. By Assumption \ref{ass:fwd_terminal_regularity}, the terminal condition $\abs{\xi}$ has an exponential moment of order $\beta_0 > \alpha$, and the random coefficient process $A_s \coloneqq K(1 + \norm{X_s}^p)$ has finite exponential moments. Under these precise conditions, the foundational theory of quadratic BSDEs, established in \cite{Kobylanski2000} and further clarified in results such as \cite{BriandEtAl2003}, guarantees that BSDE \eqref{eq:proof_dom_bsde} has a unique solution $(\bar{Y}, \bar{Z})$ with the crucial property that $\bar{Y} \in \mathcal{S}^{\infty}([0,T]; \R)$.

We can now apply a comparison principle for quadratic BSDEs. Let $(Y,Z)$ be our supposed solution. We compare it to $(\bar{Y}, \bar{Z})$.
\begin{itemize}
    \item \textbf{Terminal Condition:} $Y_T = \xi \le \abs{\xi} = \bar{Y}_T$.
    \item \textbf{Driver Inequality:} By Assumption \ref{ass:f_theta_regularity_general}(iv), our driver $f_\theta$ is non-increasing in $y$. Let $\tilde{f}(s,y,z) \coloneqq f_\theta(s,X_s,y,z)$ and $\bar{f}(s,y,z) \coloneqq K(1+\norm{X_s}^p) + K\abs{y} + \frac{\alpha}{2}\norm{z}^2$. We have:
    \[ \tilde{f}(s, \bar{Y}_s, z) = f_\theta(s, X_s, \bar{Y}_s, z) \le K(1+\norm{X_s}^p + \abs{\bar{Y}_s}) + \frac{\alpha}{2}\norm{z}^2 = \bar{f}(s, \bar{Y}_s, z). \]
\end{itemize}
The comparison theorem for quadratic BSDEs (see \cite{Kobylanski2000}), which requires one of the drivers to be non-increasing in $y$ (a condition satisfied by $f_\theta$), is therefore applicable and yields $Y_s \le \bar{Y}_s$ for all $s \in [0,T]$, a.s.

A symmetric argument provides a lower bound. Let $(\underline{Y}, \underline{Z})$ be the solution to the BSDE with terminal condition $-\abs{\xi}$ and driver $\underline{f}(s,y,z) \coloneqq -K(1+\norm{X_s}^p) - K\abs{y} - \frac{\alpha}{2}\norm{z}^2$. Again, this BSDE admits a solution $\underline{Y} \in \mathcal{S}^{\infty}([0,T]; \R)$. We have $Y_T = \xi \ge -\abs{\xi} = \underline{Y}_T$, and since $f_\theta(s,x,y,z) \ge -K(1+\norm{x}^p+\abs{y}) - \frac{\alpha}{2}\norm{z}^2$, the comparison theorem yields $Y_s \ge \underline{Y}_s$.

Since $\underline{Y} \le Y \le \bar{Y}$ and both $\underline{Y}$ and $\bar{Y}$ are in $\mathcal{S}^{\infty}([0,T]; \R)$, we conclude that any solution $Y$ must be essentially bounded, i.e., $Y \in \mathcal{S}^{\infty}([0,T]; \R)$.

\emph{(ii) BMO Estimate for the control process $Z$.}
It is a celebrated result in the theory of quadratic BSDEs \cite{Kobylanski2000} that if the $Y$ component of a solution is essentially bounded, then the corresponding $Z$ component must belong to the space $\mathcal{H}^2_{\mathrm{BMO}}([0,T]; \R^{1 \times d})$. Since we have just shown any solution $Y$ must be in $\mathcal{S}^{\infty}([0,T]; \R)$, it follows directly that any corresponding $Z$ must be in $\mathcal{H}^2_{\mathrm{BMO}}([0,T]; \R^{1 \times d})$.

\emph{(iii) Uniqueness.}
Let $(Y^1, Z^1)$ and $(Y^2, Z^2)$ be two solutions to the BSDE. From the arguments above, both must belong to the space $\mathcal{S}^{\infty}([0,T]; \R) \times \mathcal{H}^2_{\mathrm{BMO}}([0,T]; \R^{1 \times d})$. They share the same terminal condition, $Y^1_T = Y^2_T = \xi$.
By Assumption \ref{ass:f_theta_regularity_general}(iv), the driver $f_\theta$ is non-increasing in $y$. The strict comparison principle for quadratic BSDEs \cite{Kobylanski2000} applies directly, yielding both $Y^1_t \le Y^2_t$ and $Y^2_t \le Y^1_t$ for all $t \in [0,T]$ a.s. Therefore, the processes $Y^1$ and $Y^2$ are indistinguishable.

Let $\delta Z_t \coloneqq Z^1_t - Z^2_t$. Subtracting the two BSDEs, and using $Y^1 \equiv Y^2$, we get:
\[ 0 = \int_s^T \left[ f_\theta(r, X_r, Y^1_r, Z^1_r) - f_\theta(r, X_r, Y^1_r, Z^2_r) \right] \dd r - \int_s^T \delta Z_r \dd W_r. \]
This equation implies that the process $M_t \coloneqq \int_0^t \delta Z_r \dd W_r$ is a continuous local martingale which is also equal to a process of finite variation. The only process that belongs to both classes is a constant process. Since $M_0 = 0$, we must have $M_t = 0$ for all $t \in [0,T]$. The quadratic variation of a null process is zero, so $\langle M, M \rangle_T = \int_0^T \norm{\delta Z_s}^2 \dd s = 0$. This implies that $Z^1 = Z^2$ in the space $\mathcal{H}^2([0,T]; \R^{1 \times d})$. Uniqueness is established.

\textbf{Part 2: Existence via Approximation}

We construct a solution as the limit of solutions to a sequence of regularized BSDEs. For each integer $k \ge 1$, we define:
\begin{itemize}
    \item A truncated terminal condition: $\xi_k \coloneqq \max(-k, \min(\xi, k))$.
    \item A regularized driver: $f_k(t,x,y,z) \coloneqq f_\theta(t,x, \max(-k, \min(y,k)), z)$.
\end{itemize}
This regularization is crucial. For any fixed $k$, the driver $f_k$ is globally Lipschitz continuous in $y$ (with a Lipschitz constant $L_k$ from Assumption \ref{ass:f_theta_regularity_general}(iii)) while retaining its quadratic growth in $z$. Now consider the sequence of BSDEs for $k \ge 1$:
\[ -\dd Y^k_s = f_k(s, X_s, Y^k_s, Z^k_s) \dd s - Z^k_s \dd W_s, \quad \text{with } Y^k_T = \xi_k. \]
Since each $\xi_k$ is bounded and each $f_k$ is Lipschitz in $y$, standard results for quadratic BSDEs (e.g., \cite{BriandEtAl2003}) guarantee that for each $k$, this BSDE has a unique solution $(Y^k, Z^k) \in \mathcal{S}^{\infty}([0,T]; \R) \times \mathcal{H}^2_{\mathrm{BMO}}([0,T]; \R^{1 \times d})$.

\emph{Step 2.1: Uniform a priori bounds.}
The comparison argument from Part 1(i) applies uniformly to the sequence $(Y^k, Z^k)$. Since $\abs{\xi_k} \le \abs{\xi}$, the solution $(\bar{Y}, \bar{Z})$ to the same dominating BSDE \eqref{eq:proof_dom_bsde} provides a uniform upper bound for all $Y^k$. A similar uniform lower bound holds. Thus, there exists a constant $C>0$, independent of $k$, such that $\sup_k \norm{Y^k}_{\mathcal{S}^\infty} \le C$. Consequently, by the results of \cite{Kobylanski2000}, there is also a uniform bound on the BMO norm, i.e., $\sup_k \norm{Z^k}_{\mathrm{BMO}} \le C'$ for some constant $C'$.

\emph{Step 2.2: The sequence is Cauchy.}
Let $k, m \ge 1$. Define $(\delta Y^{k,m}, \delta Z^{k,m}) \coloneqq (Y^k - Y^m, Z^k - Z^m)$. Let $N$ be an integer such that $N > C$, where $C$ is the uniform bound from Step 2.1. For any $k,m > N$, the uniform bound implies $\sup_s \abs{Y^n_s} \le C < N$ for $n \in \{k,m\}$. This means the truncation in the drivers $f_k(t,x,y,z)$ and $f_m(t,x,y,z)$ is inactive along the respective solution paths, i.e., $f_k(s,X_s,Y^k_s,Z^k_s) = f_\theta(s,X_s,Y^k_s,Z^k_s)$ and similarly for $f_m$.
The pair $(\delta Y^{k,m}, \delta Z^{k,m})$ solves a BSDE with terminal condition $\delta Y^{k,m}_T = \xi_k - \xi_m$ and driver $\delta f^{k,m}_s \coloneqq f_\theta(s, X_s, Y^k_s, Z^k_s) - f_\theta(s, X_s, Y^m_s, Z^m_s)$.
Standard stability estimates for quadratic BSDEs (see, e.g., \cite{ElKarouiPengQuenez1997} or \cite{Kobylanski2000}), whose constants depend on the uniform BMO bounds of the solutions, yield an inequality of the form:
\[ \E\left[\sup_{t \in [0,T]} \abs{\delta Y^{k,m}_t}^2 + \int_0^T \norm{\delta Z^{k,m}_s}^2 \dd s\right] \le C_{\mathrm{stab}} \E\left[\abs{\xi_k - \xi_m}^2 + \left( \int_0^T \abs{\Delta f_s} \dd s \right)^2 \right], \]
where $C_{\mathrm{stab}}$ is a constant independent of $k,m$ and $\Delta f_s$ involves the difference of the drivers. Using the uniform local Lipschitz property of $f_\theta$ in $y$ on the domain $[-C, C]$ (with constant $L_C$), and after applying Gronwall's lemma, this simplifies to an estimate of the form:
\[ \E\left[\sup_{t \in [0,T]} \abs{\delta Y^{k,m}_t}^2 + \int_0^T \norm{\delta Z^{k,m}_s}^2 \dd s\right] \le C'_{\mathrm{stab}} \E\left[\abs{\xi_k - \xi_m}^2\right]. \]
The terminal condition $\xi$ has exponential moments, which implies it is in $L^p$ for all $p < \infty$, and in particular $L^2$. By construction, $\xi_k \to \xi$ pointwise and $\abs{\xi_k} \le \abs{\xi}$. By the Dominated Convergence Theorem, $\xi_k \to \xi$ in $L^2(\Omega)$. Therefore, the right-hand side converges to zero as $k, m \to \infty$. This proves that the sequence $((Y^k, Z^k))_{k \ge N}$ is a Cauchy sequence in the Banach space $\mathcal{S}^2([0,T]; \R) \times \mathcal{H}^2([0,T]; \R^{1 \times d})$.

\emph{Step 2.3: Passing to the limit.}
Since $\mathcal{S}^2 \times \mathcal{H}^2$ is complete, the Cauchy sequence $(Y^k,Z^k)$ converges to a limit $(Y,Z)$ in this space. From the uniform bounds established in Step 2.1, this limit $(Y,Z)$ must also belong to the more refined space $\mathcal{S}^{\infty}([0,T]; \R) \times \mathcal{H}^2_{\mathrm{BMO}}([0,T]; \R^{1 \times d})$.
It remains to show that $(Y,Z)$ solves the original BSDE. We need to pass to the limit in the integral form of the approximating BSDE:
\[ Y^k_t = \xi_k + \int_t^T f_k(s, X_s, Y^k_s, Z^k_s) \dd s - \int_t^T Z^k_s \dd W_s. \]
As $k \to \infty$, we know that $Y^k \to Y$ in $\mathcal{S}^2$, $Z^k \to Z$ in $\mathcal{H}^2$, and $\xi_k \to \xi$ in $L^2$. The stochastic integral converges in $\mathcal{S}^2$. The key step is the convergence of the integral of the driver. The stability properties of quadratic BSDEs (see, e.g., \cite{BriandEtAl2003}) ensure that the convergence of $(Y^k, Z^k)$ in $\mathcal{S}^2 \times \mathcal{H}^2$, combined with the uniform $\mathcal{S}^\infty$ and BMO bounds from Step 2.1, guarantees the convergence of the drift term in $L^1$:
\[ \lim_{k\to\infty} \E\left[\int_0^T \abs{f_k(s, X_s, Y^k_s, Z^k_s) - f_\theta(s, X_s, Y_s, Z_s)} \dd s\right] = 0. \]
Passing to the limit in all terms of the integral equation confirms that the limit pair $(Y,Z)$ is the desired unique solution to the original BSDE.
\end{proof}

\section{Properties of the Neural Expectation}
\label{sec:axiomatic_properties}

Having established in Theorem \ref{thm:wellposedness_quadratic} that the Neural Expectation operator $\cEneutheta$ is well-defined, we now investigate its fundamental properties. This section's primary contribution is to rigorously demonstrate that the core axiomatic properties required of a dynamic, non-linear expectation, such as monotonicity, dynamic consistency, and convexity, can be guaranteed through concrete, verifiable architectural constraints on the underlying neural network $f_\theta$. This solidifies the constructive bridge between abstract mathematical theory and machine learning practice. We then delve into the infinitesimal structure of processes under this expectation, showing how classical Itô calculus reveals an explicit drift term that quantifies the instantaneous impact of the learned ambiguity.

\subsection{Fundamental Axiomatic Properties}

The definition of $\cEneutheta$ via a BSDE immediately endows it with several properties that are foundational to the theory of dynamic risk measures and non-linear expectations.

\begin{remark}[Axiomatic Properties of the Neural Expectation]
\label{rem:axiomatic_properties_revised}
The operator $\cEneutheta[\cdot|\Fcal_t]$, defined as the solution $Y_t$ to BSDE \eqref{eq:bsde_for_eneu}, satisfies the following properties, provided the driver $f_\theta$ and terminal condition $\xi$ meet the conditions of Theorem \ref{thm:wellposedness_quadratic}.

\begin{enumerate}[label=(\roman*), leftmargin=*]
    \item \textbf{Dynamic Consistency (Time Consistency):} The operator is dynamically consistent by construction. For any $0 \le t \le s \le T$, the flow property of BSDE solutions implies:
    \[ \cEneutheta[\xi | \Fcal_t] = \cEneutheta\left[ \cEneutheta[\xi | \Fcal_s] \big| \Fcal_t \right]. \]
    This is a direct consequence of the uniqueness of the solution established in Theorem \ref{thm:wellposedness_quadratic}. Specifically, the process $Y_u = \cEneutheta[\xi|\Fcal_u]$ for $u \in [t,T]$ is the unique solution on this interval. The process $Y'_u \coloneqq \cEneutheta\left[ \cEneutheta[\xi | \Fcal_s] \big| \Fcal_u \right]$ for $u \in [t,s]$ solves the same BSDE on $[t,s]$ as $Y_u$ (with terminal condition $Y_s$), so they must be indistinguishable.

    \item \textbf{Monotonicity:} Let $\xi_1, \xi_2$ be two terminal conditions such that $\xi_1 \ge \xi_2$ a.s. Let $(Y^1,Z^1)$ and $(Y^2,Z^2)$ be the corresponding solutions. The monotonicity of the driver with respect to $y$ (Assumption \ref{ass:f_theta_regularity_general}(iv)) is the key ingredient for the comparison principle for quadratic BSDEs. Since $Y^1_T = \xi_1 \ge \xi_2 = Y^2_T$ and $f_\theta$ is non-increasing in $y$, the comparison theorem (cf. \cite{Kobylanski2000}, Theorem 2.3) directly implies that $Y^1_t \ge Y^2_t$ for all $t \in [0,T]$ a.s. Therefore:
    \[ \xi_1 \ge \xi_2 \implies \cEneutheta[\xi_1 | \Fcal_t] \ge \cEneutheta[\xi_2 | \Fcal_t]. \]

    \item \textbf{Normalization (Preservation of Constants):} For $\cEneutheta$ to satisfy the canonical property $\cEneutheta[c|\Fcal_t] = c$ for any constant $c$, we require that the constant process $(Y_s = c, Z_s = 0)$ solves the BSDE. Substituting this into the equation yields $c = c + \int_s^T f_\theta(u, X_u, c, 0) \dd u$. This holds if and only if the driver satisfies the structural condition:
    \[ f_\theta(t, x, y, 0) = 0 \quad \text{for all } (t, x, y). \]
    This is a strong constraint that is not automatically satisfied by general network architectures. Enforcing it exactly by architectural design without violating other core assumptions (like quadratic growth in $z$) is non-trivial. A more practical and principled approach within our framework is to encourage this property during the learning phase (Section \ref{sec:learning_theta}) by adding a regularization term to the loss function that penalizes deviations from zero, for instance, by adding a term proportional to $\E[\int_0^T \abs{f_\theta(t, X_t, y_t, 0)}^2 \dd t]$.

    \item \textbf{Connection to Convex Risk Measures:} A mapping $\rho_t(\xi) = \cEneutheta[-\xi | \Fcal_t]$ defines a dynamic convex risk measure if it is monotone, dynamically consistent, and convex. The properties above establish the first two. The next subsection provides verifiable conditions for convexity, thus enabling the construction of architecturally-defined, data-driven convex risk measures that are well-posed by virtue of Theorem \ref{thm:wellposedness_quadratic}.
\end{enumerate}
\end{remark}

\subsection{Convexity and Jensen's Inequality}

Convexity is a cornerstone of risk aversion in economic theory. We first show that this property can be guaranteed by the network's architecture and then prove the corresponding Jensen's inequality.

\begin{proposition}[Enforcing Convexity via Architectural Design]
\label{prop:convexity_enforcement}
Let the driver $f_\theta$ satisfy the conditions of Assumption \ref{ass:f_theta_regularity_general}. If, in addition, the function $(y,z) \mapsto f_\theta(t,x,y,z)$ is convex for all fixed $(t,x)$, then the corresponding mapping $\xi \mapsto \cEneutheta[\xi | \Fcal_t]$ is a convex functional. This convexity condition on the driver can be guaranteed by construction if $f_\theta$ is parameterized as an Input-Convex Neural Network (ICNN) with respect to the input variables $(y,z)$, as shown in \cite{AmosEtAl2017ICNN}.
\end{proposition}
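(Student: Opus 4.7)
The plan is to prove the convexity of the operator by a direct application of the comparison theorem for quadratic BSDEs, the same workhorse that underlies Theorem \ref{thm:wellposedness_quadratic}. Fix $\lambda \in [0,1]$ and two terminal conditions $\xi^1, \xi^2$ each satisfying Assumption \ref{ass:fwd_terminal_regularity}(ii), and note that $\xi^\lambda := \lambda \xi^1 + (1-\lambda)\xi^2$ inherits the exponential integrability by convexity of $r \mapsto e^{\beta_0 r}$ combined with Hölder. Denote by $(Y^i, Z^i)$ the unique solution associated with $\xi^i$ and by $(Y^\lambda, Z^\lambda)$ the solution associated with $\xi^\lambda$, all supplied by Theorem \ref{thm:wellposedness_quadratic}. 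Define $\bar{Y}_s := \lambda Y^1_s + (1-\lambda) Y^2_s$ and $\bar{Z}_s := \lambda Z^1_s + (1-\lambda) Z^2_s$; since $\mathcal{S}^\infty$ and $\mathcal{H}^2_{\mathrm{BMO}}$ are normed vector spaces, $(\bar{Y},\bar{Z})$ inherits the corresponding bounds.

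By linearity of the BSDE integral form, the pair $(\bar{Y}, \bar{Z})$ has terminal value $\xi^\lambda$ and satisfies a BSDE with random driver $\bar{g}(s) := \lambda f_\theta(s, X_s, Y^1_s, Z^1_s) + (1-\lambda) f_\theta(s, X_s, Y^2_s, Z^2_s)$. I would then split $\bar{g}(s) = f_\theta(s, X_s, \bar{Y}_s, \bar{Z}_s) + \Delta_s$ where $\Delta_s := \bar{g}(s) - f_\theta(s, X_s, \bar{Y}_s, \bar{Z}_s) \ge 0$ almost surely, by the joint convexity of $(y,z) \mapsto f_\theta(t,x,y,z)$. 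Thus $\bar{Y}$ solves a BSDE with driver $f_\theta + \Delta$ whereas $Y^\lambda$ solves the BSDE with driver $f_\theta$, both carrying the terminal condition $\xi^\lambda$.

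The comparison principle for quadratic BSDEs (cf.\ \cite{Kobylanski2000}, Theorem 2.3) now applies: both solutions lie in $\mathcal{S}^\infty \times \mathcal{H}^2_{\mathrm{BMO}}$; the driver $f_\theta$ is non-increasing in $y$ by Assumption \ref{ass:f_theta_regularity_general}(iv), a property trivially preserved under addition of a random coefficient $\Delta_s$ independent of $y$; and the augmented driver dominates $f_\theta$ pointwise. The theorem therefore yields $\bar{Y}_t \ge Y^\lambda_t$ a.s.\ for every $t \in [0,T]$, which, unpacked, is exactly $\lambda \cEneutheta[\xi^1|\Fcal_t] + (1-\lambda) \cEneutheta[\xi^2|\Fcal_t] \ge \cEneutheta[\xi^\lambda|\Fcal_t]$. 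For the ICNN assertion, I would cite the defining construction of \cite{AmosEtAl2017ICNN}: a feed-forward architecture with convex non-decreasing activations (e.g.\ ReLU) and with non-negative ``passthrough'' weights in the deeper layers yields an input-convex map, by an elementary induction using that (i) a non-negative linear combination of convex functions is convex and (ii) the composition of a convex non-decreasing function with a convex function is convex. Applying this with input vector $(y,z)$ delivers the required joint convexity of $f_\theta(t,x,\cdot,\cdot)$.

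The main obstacle is the comparison step, because the augmented driver $f_\theta + \Delta$ is not itself a ``nice'' function of $(y,z)$ but carries a random, possibly unbounded, perturbation that absorbs the nonlinearity of $f_\theta$ evaluated along $(Y^i, Z^i)$. I would therefore need to check explicitly that the hypotheses of Kobylanski's comparison theorem still hold: monotonicity in $y$ is obvious since $\Delta_s$ does not depend on the unknown $y$, and quadratic growth in $z$ is preserved because $\Delta_s$ is a random coefficient, not a function of $z$. A secondary verification, important for the ICNN part, is architectural compatibility: the sign constraints that enforce joint convexity via \cite{AmosEtAl2017ICNN} must be simultaneously reconcilable with the monotonicity-in-$y$ constraints from Proposition \ref{prop:architectures}(c); this requires picking first-layer weights on the $y$-coordinate to be non-positive while keeping the convexity-preserving non-negativity pattern in the deeper layers, which is feasible but worth spelling out.
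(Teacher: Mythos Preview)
Your proof is correct and follows essentially the same approach as the paper: form the convex combination $(\bar Y,\bar Z)$ of the two solutions, use joint convexity of $(y,z)\mapsto f_\theta$ to obtain a pointwise driver inequality, and conclude via the comparison theorem for quadratic BSDEs. Your decomposition $\bar g(s)=f_\theta(\cdot,\bar Y_s,\bar Z_s)+\Delta_s$ with $\Delta_s\ge 0$ is simply a rephrasing of the paper's direct inequality $f_\theta(s,X_s,\hat Y_s,\hat Z_s)\le \hat f_s$, and your additional remarks on integrability of $\xi^\lambda$ and on reconciling the ICNN sign constraints with the monotonicity constraints of Proposition~\ref{prop:architectures}(c) are welcome clarifications beyond what the paper spells out.
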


\begin{proof}
The proof is a rigorous application of the comparison principle, following the classical argument in \cite{ElKarouiPengQuenez1997} adapted to our quadratic setting.

Let $\xi_1, \xi_2$ be two terminal conditions satisfying the required integrability assumptions, and let $(Y^1, Z^1)$ and $(Y^2, Z^2)$ be the unique solutions to the corresponding BSDEs as guaranteed by Theorem \ref{thm:wellposedness_quadratic}. For any $\lambda \in [0,1]$, let $\xi_\lambda \coloneqq \lambda \xi_1 + (1-\lambda)\xi_2$. Let $(Y^\lambda, Z^\lambda)$ be the unique solution to the BSDE with terminal condition $\xi_\lambda$ and driver $f_\theta$. Our goal is to show that $Y^\lambda_s \le \lambda Y^1_s + (1-\lambda)Y^2_s$ for all $s \in [0,T]$.

Define the convexly combined processes $\hat{Y}_s \coloneqq \lambda Y^1_s + (1-\lambda)Y^2_s$ and $\hat{Z}_s \coloneqq \lambda Z^1_s + (1-\lambda)Z^2_s$. By linearity of the Itô integral, the pair $(\hat{Y}, \hat{Z})$ satisfies the BSDE:
\[ - \dd \hat{Y}_s = \left( \lambda f_\theta(s, X_s, Y^1_s, Z^1_s) + (1-\lambda)f_\theta(s, X_s, Y^2_s, Z^2_s) \right) \dd s - \hat{Z}_s \dd W_s, \]
with terminal condition $\hat{Y}_T = \xi_\lambda$. Let us denote the driver for this process by $\hat{f}_s \coloneqq \lambda f_\theta(s, X_s, Y^1_s, Z^1_s) + (1-\lambda)f_\theta(s, X_s, Y^2_s, Z^2_s)$.

By the assumed convexity of the map $(y,z) \mapsto f_\theta(t,x,y,z)$, we have the pathwise inequality for each $s \in [0,T]$:
\begin{align*}
f_\theta(s, X_s, \hat{Y}_s, \hat{Z}_s) &= f_\theta(s, X_s, \lambda Y^1_s + (1-\lambda)Y^2_s, \lambda Z^1_s + (1-\lambda)Z^2_s) \\
&\le \lambda f_\theta(s, X_s, Y^1_s, Z^1_s) + (1-\lambda) f_\theta(s, X_s, Y^2_s, Z^2_s) \\
&= \hat{f}_s.
\end{align*}
We now compare the solutions $(Y^\lambda, Z^\lambda)$ and $(\hat{Y}, \hat{Z})$. They share the same terminal condition $\xi_\lambda$. The driver for the first is $f_\theta(s,X_s,y,z)$ and for the second is $\hat{f}_s$. The inequality above, $f_\theta(s, X_s, \hat{Y}_s, \hat{Z}_s) \le \hat{f}_s$, is precisely the condition required by the comparison theorem. Since $f_\theta$ is also non-increasing in $y$ (by Assumption \ref{ass:f_theta_regularity_general}(iv)), the comparison theorem for quadratic BSDEs (cf. \cite{Kobylanski2000}) applies and yields $Y^\lambda_s \le \hat{Y}_s$ for all $s \in [0,T]$.
This establishes the convexity of the Neural Expectation operator:
\[ \cEneutheta[\lambda\xi_1 + (1-\lambda)\xi_2 | \Fcal_s] \le \lambda \cEneutheta[\xi_1|\Fcal_s] + (1-\lambda)\cEneutheta[\xi_2|\Fcal_s]. \]
The final claim regarding ICNNs follows directly from their definition and constructive properties.
\end{proof}

\begin{proposition}[Jensen's Inequality for Neural Expectations]
\label{prop:jensen}
Let Assumption \ref{ass:f_theta_regularity_general} hold and assume the map $(y,z) \mapsto f_\theta(t,x,y,z)$ is convex for all fixed $(t,x)$. Let $\phi: \R \to \R$ be a convex function of class $C^2$. Let $\xi$ be a terminal condition such that both $\xi$ and $\phi(\xi)$ satisfy the exponential integrability condition of Assumption \ref{ass:fwd_terminal_regularity}. Then for all $t \in [0,T]$:
\[ \cEneutheta[\phi(\xi) | \Fcal_t] \ge \phi(\cEneutheta[\xi | \Fcal_t]). \]
\end{proposition}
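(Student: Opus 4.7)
The plan is to apply Itô's formula to the process $\phi(Y_s)$, where $(Y,Z)$ denotes the unique solution of BSDE \eqref{eq:bsde_for_eneu} with terminal $\xi$ (whose existence and $\mathcal{S}^\infty([0,T];\R) \times \mathcal{H}^2_{\mathrm{BMO}}([0,T];\R^{1 \times d})$-regularity is guaranteed by Theorem \ref{thm:wellposedness_quadratic}), and then to invoke the comparison theorem for quadratic BSDEs against the BSDE defining $U_t \coloneqq \cEneutheta[\phi(\xi)|\Fcal_t]$. Since $\phi \in C^2$ and $Y$ is essentially bounded, both $\phi'(Y)$ and $\phi''(Y)$ are bounded processes on the range of $Y$; in particular $\phi'(Y)Z \in \mathcal{H}^2_{\mathrm{BMO}}$, so any comparison will operate in the correct functional space.

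The first step is to derive via Itô's formula
\begin{equation*}
\phi(Y_t) = \phi(\xi) + \int_t^T \left[\phi'(Y_s) f_\theta(s, X_s, Y_s, Z_s) - \tfrac{1}{2}\phi''(Y_s)\norm{Z_s}^2\right] \dd s - \int_t^T \phi'(Y_s) Z_s \, \dd W_s,
\end{equation*}
so that $(\phi(Y), \phi'(Y) Z)$ solves a BSDE on $[0,T]$ with terminal $\phi(\xi)$ and an explicit driver. Because $\phi$ is convex, the Itô correction $-\tfrac{1}{2}\phi''(Y_s)\norm{Z_s}^2$ is non-positive, a sign property that is crucial below. Next, let $(U, V)$ denote the unique solution of the BSDE with terminal $\phi(\xi)$ and driver $f_\theta$; this is well-posed by Theorem \ref{thm:wellposedness_quadratic}, using the hypothesis that $\phi(\xi)$ also satisfies the exponential integrability of Assumption \ref{ass:fwd_terminal_regularity}.

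Since both BSDEs share the same terminal condition $\phi(\xi)$, it suffices to establish the pathwise driver inequality
\begin{equation*}
\phi'(Y_s) f_\theta(s, X_s, Y_s, Z_s) - \tfrac{1}{2}\phi''(Y_s)\norm{Z_s}^2 \le f_\theta(s, X_s, \phi(Y_s), \phi'(Y_s) Z_s),
\end{equation*}
and then apply the comparison principle for quadratic BSDEs (\cite{Kobylanski2000}, Theorem 2.6). The monotonicity of $f_\theta$ in $y$ required by that theorem is supplied by Assumption \ref{ass:f_theta_regularity_general}(iv), and the resulting inequality $\phi(Y_t) \le U_t$ is precisely Jensen's inequality.

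The hard part, and the only non-routine step, will be the verification of this pathwise driver inequality. My approach is to exploit the assumed convexity of $(y,z) \mapsto f_\theta(t,x,y,z)$ through its subgradient characterization at the shifted point $(\phi(Y_s), \phi'(Y_s) Z_s)$: for any $(p,q) \in \partial f_\theta(s, X_s, \phi(Y_s), \phi'(Y_s) Z_s)$ the tangent-plane inequality yields a pointwise bound on $f_\theta(s, X_s, Y_s, Z_s)$ in terms of $f_\theta(s, X_s, \phi(Y_s), \phi'(Y_s) Z_s)$ plus a remainder linear in $\phi(Y_s) - Y_s$ and $(\phi'(Y_s) - 1) Z_s$. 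The non-positive Itô correction $-\tfrac{1}{2}\phi''(Y_s)\norm{Z_s}^2$, combined with the convexity of $\phi$ (via the elementary bounds $\phi(y) - y \ge (\phi'(y)-1)\,y$ etc.), is expected to absorb this remainder. As a fallback, should the pointwise comparison not close cleanly in full generality, I would approximate $\phi$ from below by a monotone sequence of smooth convex minorants (for instance by Moreau--Yosida regularization), exploit the monotonicity and convexity of $\cEneutheta$ established in Remark \ref{rem:axiomatic_properties_revised} and Proposition \ref{prop:convexity_enforcement} on each approximation, and pass to the limit using the $\mathcal{S}^2 \times \mathcal{H}^2$-stability of the quadratic BSDE solution map established in the proof of Theorem \ref{thm:wellposedness_quadratic}.
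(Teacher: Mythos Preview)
Your plan is essentially the paper's: apply It\^o to $\phi(Y)$, read off the BSDE satisfied by $(\phi(Y),\phi'(Y)Z)$, establish the pathwise driver inequality
\[
\phi'(Y_s)\,f_\theta(s,X_s,Y_s,Z_s)-\tfrac12\phi''(Y_s)\norm{Z_s}^2 \;\le\; f_\theta\bigl(s,X_s,\phi(Y_s),\phi'(Y_s)Z_s\bigr),
\]
and then invoke the quadratic comparison theorem. The only divergence is in how this inequality is justified: the paper simply quotes it as a convex-analysis lemma (attributed to \cite{BriandHu2008}) valid for any $h$ convex in $(y,z)$ and any $C^2$ convex $\phi$, whereas your subgradient sketch does not close as written---in particular your ``elementary bound'' $\phi(y)-y\ge(\phi'(y)-1)y$ rearranges to $\phi(y)\ge\phi'(y)y$, which already fails for $\phi(y)=y^2$---so you should invoke the lemma directly and drop the Moreau--Yosida fallback.
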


\begin{proof}
Let $(Y, Z) \in \mathcal{S}^\infty([0,T]; \R) \times \mathcal{H}^2_{\mathrm{BMO}}([0,T]; \R^{1\times d})$ be the unique solution to the BSDE for $\xi$, so $Y_t = \cEneutheta[\xi|\Fcal_t]$. Let $(\bar{Y}, \bar{Z})$ be the unique solution for the terminal condition $\phi(\xi)$, so $\bar{Y}_t = \cEneutheta[\phi(\xi)|\Fcal_t]$. Our objective is to prove $\bar{Y}_t \ge \phi(Y_t)$.

Consider the process $\hat{Y}_t \coloneqq \phi(Y_t)$. By Itô's formula for semimartingales:
\[ \dd \hat{Y}_t = \phi'(Y_t) \dd Y_t + \frac{1}{2}\phi''(Y_t) \dd\langle Y, Y \rangle_t = \phi'(Y_t) \left( -f_\theta(s, X_s, Y_s, Z_s)\dd s + Z_s \dd W_s \right) + \frac{1}{2}\phi''(Y_t) \norm{Z_s}^2 \dd s. \]
Letting the new control process be $\hat{Z}_t \coloneqq \phi'(Y_t)Z_t$, we can write the dynamics of $\hat{Y}$ as a BSDE with terminal condition $\hat{Y}_T = \phi(Y_T) = \phi(\xi)$ and a path-dependent driver $\hat{f}(s, \omega) \coloneqq \phi'(Y_s)f_\theta(s,X_s, Y_s, Z_s) - \frac{1}{2}\phi''(Y_s)\norm{Z_s}^2$.

The proof hinges on comparing the driver of $\bar{Y}$, which is $f_\theta(s,x,y,z)$, with the driver $\hat{f}$ of $\hat{Y}$. A fundamental result of convex analysis, used in a similar context in \cite{BriandHu2008}, states that for a function $h(y,z)$ convex in its arguments and a $C^2$ convex function $\phi$:
\[ h( \phi(y), \phi'(y)z ) \ge \phi'(y)h(y,z) - \frac{1}{2}\phi''(y)\norm{z}^2. \]
Applying this inequality pathwise for each $(s, \omega)$, with $h(y,z)=f_\theta(s,X_s(\omega),y,z)$, $y=Y_s(\omega)$, and $z=Z_s(\omega)$, we obtain:
\[ f_\theta(s, X_s, \phi(Y_s), \phi'(Y_s)Z_s) \ge \phi'(Y_s)f_\theta(s, X_s, Y_s, Z_s) - \frac{1}{2}\phi''(Y_s)\norm{Z_s}^2. \]
Recognizing that $\phi(Y_s) = \hat{Y}_s$, $\phi'(Y_s)Z_s = \hat{Z}_s$, and the right-hand side is precisely the driver $\hat{f}(s, \omega)$, this inequality becomes:
\[ f_\theta(s, X_s, \hat{Y}_s, \hat{Z}_s) \ge \hat{f}(s, \omega). \]
We now have the necessary setup for the comparison theorem. The process $\bar{Y}$ solves a BSDE with driver $f_\theta$, while $\hat{Y}$ solves a BSDE with driver $\hat{f}$. They share the same terminal condition $\phi(\xi)$. The inequality shows that the driver for $\bar{Y}$ evaluated along the path $(\hat{Y}, \hat{Z})$ is greater than or equal to the driver for $\hat{Y}$. By the comparison theorem for quadratic BSDEs (cf. \cite{Kobylanski2000}), this implies $\bar{Y}_t \ge \hat{Y}_t$ for all $t \in [0,T]$. This is precisely the statement of Jensen's inequality: $\cEneutheta[\phi(\xi) | \Fcal_t] \ge \phi(\cEneutheta[\xi | \Fcal_t])$.
\end{proof}

\subsection{The Structure of Dynamics under Learned Ambiguity}

We now apply classical Itô calculus to reveal how the learned ambiguity, encoded by the driver $f_\theta$, manifests as a tangible and quantifiable force on the pathwise dynamics of a process. The goal is not to invent a new stochastic calculus---the integrator remains the standard Brownian motion $W_t$---but to decompose the infinitesimal generator of any process defined via $\cEneutheta$. This isolates a novel drift term that is an explicit function of the learned ambiguity model.

\begin{proposition}[Infinitesimal Decomposition under Neural Expectation]
\label{prop:neural_ito_decomposition}
Let $(Y_t)_{t \in [0,T]}$ be a process defined by $Y_t = \cEneutheta[\xi | \Fcal_t]$, and let $(Y_t, Z_t)$ be the unique solution pair to the defining BSDE. For any function $\phi \in C^2(\R, \R)$, the process $\hat{Y}_t \coloneqq \phi(Y_t)$ is a classical Itô process whose dynamics are governed by the SDE:
\begin{equation}
\label{eq:neural_ito_decomposition}
\dd \phi(Y_t) = \left[ -\phi'(Y_t)f_\theta(t, X_t, Y_t, Z_t) + \frac{1}{2}\phi''(Y_t)\norm{Z_t}^2 \right] \dd t + \phi'(Y_t)Z_t \dd W_t.
\end{equation}
\end{proposition}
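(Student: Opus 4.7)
The statement asserts that $\phi(Y_t)$ is a classical Itô process with an explicit decomposition, so the natural approach is a direct and carefully justified application of Itô's formula to the composition $\phi \circ Y$. The plan is to first read off the semimartingale decomposition of $Y$ from the differential form of the BSDE in Definition \ref{def:neural_bsde}, then verify the regularity needed to apply Itô's formula, and finally identify the resulting drift and martingale pieces with those displayed in \eqref{eq:neural_ito_decomposition}.

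First I would note that by Theorem \ref{thm:wellposedness_quadratic} the pair $(Y,Z)$ lies in $\mathcal{S}^\infty([0,T];\R)\times\mathcal{H}^2_{\mathrm{BMO}}([0,T];\R^{1\times d})$, so in particular $Z\in\mathcal{H}^2$, the Itô integral $\int_0^\cdot Z_s\,\dd W_s$ is a well-defined continuous martingale, and the integral form of \eqref{eq:bsde_for_eneu} gives the semimartingale decomposition
\begin{equation*}
\dd Y_t = -f_\theta(t,X_t,Y_t,Z_t)\,\dd t + Z_t\,\dd W_t,
\end{equation*}
with finite-variation part $A_t = -\int_0^t f_\theta(s,X_s,Y_s,Z_s)\,\dd s$ (finite a.s.\ by the quadratic growth bound in Assumption \ref{ass:f_theta_regularity_general}(ii) combined with $Y\in\mathcal{S}^\infty$, $Z\in\mathcal{H}^2_{\mathrm{BMO}}$, and Assumption \ref{ass:fwd_terminal_regularity}(iii)) and quadratic variation $\dd\langle Y,Y\rangle_t = \norm{Z_t}^2\,\dd t$.

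Next I would apply the one-dimensional Itô formula to $\phi(Y_t)$. Since $\phi\in C^2(\R,\R)$ and $Y$ is an $\R$-valued continuous semimartingale, the standard Itô formula yields
\begin{equation*}
\dd\phi(Y_t) = \phi'(Y_t)\,\dd Y_t + \tfrac12\phi''(Y_t)\,\dd\langle Y,Y\rangle_t.
\end{equation*}
Substituting the two pieces from the previous step and collecting the $\dd t$ and $\dd W_t$ terms produces exactly \eqref{eq:neural_ito_decomposition}, with drift $-\phi'(Y_t)f_\theta(t,X_t,Y_t,Z_t) + \tfrac12\phi''(Y_t)\norm{Z_t}^2$ and diffusion coefficient $\phi'(Y_t)Z_t$. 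To confirm that the terms in \eqref{eq:neural_ito_decomposition} make sense as a bona fide Itô process, I would observe that since $Y\in\mathcal{S}^\infty$, there is a (random but finite) constant $R$ with $\sup_{t\le T}\abs{Y_t}\le R$ a.s., so $\phi'(Y_t)$ and $\phi''(Y_t)$ are uniformly bounded on the solution's range by $M\coloneqq \sup_{\abs{y}\le R}(\abs{\phi'(y)}+\abs{\phi''(y)})<\infty$; combined with $Z\in\mathcal{H}^2_{\mathrm{BMO}}\subset\mathcal{H}^2$ this guarantees that $\phi'(Y)Z\in\mathcal{H}^2$ and that the drift is pathwise integrable.

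I do not expect any serious obstacle: the statement is essentially Itô's formula applied to the BSDE dynamics. The only point that requires care is the verification that all objects appearing in \eqref{eq:neural_ito_decomposition} are well-defined and integrable in the appropriate sense, which is where the $\mathcal{S}^\infty\times\mathcal{H}^2_{\mathrm{BMO}}$ a priori control coming from Theorem \ref{thm:wellposedness_quadratic} plays a genuinely useful role. No comparison arguments, no change of measure, and no approximation scheme are needed here.
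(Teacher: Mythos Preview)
Your proposal is correct and follows essentially the same approach as the paper: a direct application of the classical It\^o formula to $\phi(Y_t)$ using the BSDE dynamics $\dd Y_t = -f_\theta\,\dd t + Z_t\,\dd W_t$ and the quadratic variation $\dd\langle Y,Y\rangle_t = \norm{Z_t}^2\,\dd t$. Your version is in fact more careful than the paper's, which omits the integrability checks you supply via the $\mathcal{S}^\infty\times\mathcal{H}^2_{\mathrm{BMO}}$ bounds from Theorem~\ref{thm:wellposedness_quadratic}.
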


\begin{proof}
The proof is a direct application of the classical Itô formula. By definition, the process $Y_t$ is an Itô process satisfying the forward dynamics $\dd Y_t = -f_\theta(t, X_t, Y_t, Z_t) \dd t + Z_t \dd W_t$. Its quadratic variation is $\dd\langle Y, Y \rangle_t = \norm{Z_t}^2 \dd t$. Applying Itô's formula to $\phi(Y_t)$ yields:
\begin{align*}
\dd \phi(Y_t) &= \phi'(Y_t) \dd Y_t + \frac{1}{2}\phi''(Y_t) \dd\langle Y, Y \rangle_t \\
&= \phi'(Y_t) \left( -f_\theta(t, X_t, Y_t, Z_t)\dd t + Z_t \dd W_t \right) + \frac{1}{2}\phi''(Y_t) \norm{Z_t}^2 \dd t.
\end{align*}
Grouping the drift and diffusion terms gives the desired result.
\end{proof}

\subsubsection{Interpretation: The Anatomy of the Effective Drift}
The significance of Proposition \ref{prop:neural_ito_decomposition} lies in its interpretation. The dynamics of $\phi(Y_t)$ are separated into a martingale part under the reference measure $\Prob$ and an effective drift:
\[ \dd \phi(Y_t) = \underbrace{b^\theta_t(\phi, Y_t, Z_t) \dd t}_{\text{Effective Drift}} + \underbrace{\phi'(Y_t)Z_t \dd W_t}_{\text{Martingale Part under } \Prob}. \]
The effective drift $b^\theta_t$ is a sum of two components with distinct meanings:
\[ b^\theta_t = \underbrace{-\phi'(Y_t)f_\theta(t, X_t, Y_t, Z_t)}_{\text{Ambiguity-Induced Drift}} + \underbrace{\frac{1}{2}\phi''(Y_t)\norm{Z_t}^2}_{\text{Convexity Correction}}. \]
This decomposition precisely isolates the influence of the non-linear driver.
\begin{itemize}
    \item \textbf{Convexity Correction:} The term $\frac{1}{2}\phi''(Y_t)\norm{Z_t}^2$ is the familiar Itô convexity adjustment, present even in an ambiguity-neutral world where $\cEneutheta = \E$.
    \item \textbf{Ambiguity-Induced Drift:} The term $-\phi'(Y_t)f_\theta(t, X_t, Y_t, Z_t)$ is the novel component. It arises entirely from the non-linearity of the Neural Expectation. It explicitly quantifies how the universe of plausible models, learned by the neural network and represented by $f_\theta$, systematically alters the perceived instantaneous growth rate of the process.
\end{itemize}
In the classical, ambiguity-neutral case where $f_\theta \equiv 0$, this term vanishes. Our framework thus shows that learned model uncertainty exerts a concrete, instantaneous force on the dynamics of any value process. This decomposition is a fundamental instrument for analyzing the consequences of learned ambiguity in any dynamic model.

\subsection{Neural-Martingales and Representation via Change of Measure}

We now define the analogue of a martingale in our setting and connect it to the representation of $\cEneutheta$ as a supremum of linear expectations over a class of measures.

\begin{definition}[Neural-Martingale]
\label{def:neural_martingale_revised}
An $\mathbb{F}$-adapted process $(M_t)_{t \in [0,T]}$ is called a \textbf{Neural-martingale} with respect to $\cEneutheta$ if for all $0 \le t \le s \le T$, $M_t = \cEneutheta[M_s | \Fcal_t]$. (Neural-supermartingales and -submartingales are defined with $\ge$ and $\le$, respectively).
\end{definition}

\begin{proposition}[Representation via Change of Measure]
\label{prop:feynman_kac}
Let Assumption \ref{ass:f_theta_regularity_general} hold. For the specific case where the driver $f_\theta(t,x,z)$ is convex in $z$ and does not depend on $y$, the Neural Expectation admits the following dual representation (a non-linear Feynman-Kac formula, cf. \cite{ElKarouiPengQuenez1997}):
\[ \cEneutheta[\xi | \Fcal_t] = \operatorname*{ess\,sup}_{u \in \mathcal{U}} \E_{\mathbb{Q}^u}\left[\xi + \int_t^T g_\theta(s, X_s, u_s) \dd s \mid \Fcal_t\right], \]
where $g_\theta(t,x,u) \coloneqq -\sup_{z \in \R^{1 \times d}} \left( -\scpr{z}{u} - f_\theta(t,x,z) \right)$ is related to the Fenchel-Legendre transform of $f_\theta$ with respect to $z$, $\mathcal{U}$ is a suitable set of admissible control processes such that the stochastic exponential $\mathcal{E}(\int u_s \dd W_s)$ is a uniformly integrable martingale, and $\mathbb{Q}^u$ is the measure defined by the Radon-Nikodym derivative $\frac{\dd\mathbb{Q}^u}{\dd\mathbb{P}} = \mathcal{E}\left(\int u_s \dd W_s\right)_T$.
\end{proposition}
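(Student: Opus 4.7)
The plan is to prove the representation by combining the Fenchel--Legendre duality of $f_\theta$ in $z$ with a Girsanov change of measure, crucially exploiting the BMO regularity of $Z$ from Theorem \ref{thm:wellposedness_quadratic} to justify every stochastic operation. First I would record the variational form of the driver: since $z \mapsto f_\theta(t,x,z)$ is convex by hypothesis, proper, and lower semicontinuous (from the continuity in Assumption \ref{ass:f_theta_regularity_general}(i)), the Fenchel--Moreau theorem combined with the algebraic identity $g_\theta(t,x,u) = -f_\theta^{*}(t,x,-u)$ yields, after a sign change in the sup-variable, the dual representation $f_\theta(t,x,z) = \sup_{u}\{g_\theta(t,x,u) - \scpr{z}{u}\}$, with the supremum attained on the (possibly multi-valued) subdifferential $\partial_z f_\theta(t,x,z)$.

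Next, for the upper bound, I would fix an arbitrary admissible $u \in \mathcal{U}$ and apply Girsanov. With the density $L_T \coloneqq \mathcal{E}(\int_0^\cdot u_s \dd W_s)_T$ defining $\mathbb{Q}^u$, the process $W^u_s \coloneqq W_s - \int_0^s u_r \dd r$ is a $\mathbb{Q}^u$-Brownian motion, so substituting $\dd W_s = \dd W^u_s + u_s \dd s$ rewrites the BSDE \eqref{eq:bsde_for_eneu} (which has no $y$-dependence in its driver by hypothesis) as
\[ Y_t = \xi + \int_t^T \bigl[ f_\theta(s, X_s, Z_s) - Z_s u_s \bigr] \dd s - \int_t^T Z_s \dd W^u_s. \]
Because $Z \in \mathcal{H}^2_{\mathrm{BMO}}$ under $\Prob$ and the density $L$ is uniformly integrable, a standard BMO-preservation argument (the Kazamaki--Doléans-Dade theory underpinning Remark \ref{rem:bmo_centrality}) ensures $\int Z \dd W^u$ remains a true $\mathbb{Q}^u$-martingale. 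Taking $\mathbb{Q}^u$-conditional expectation and invoking the pointwise Fenchel inequality $f_\theta(s,X_s,Z_s) - Z_s u_s \ge g_\theta(s, X_s, u_s)$ yields $Y_t \ge \E_{\mathbb{Q}^u}[\xi + \int_t^T g_\theta(s, X_s, u_s) \dd s \mid \Fcal_t]$; taking the essential supremum over $u \in \mathcal{U}$ gives one inequality.

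For the reverse inequality I would construct an optimizer that makes the Fenchel inequality tight along the solution path. A measurable selection argument (Filippov--Kuratowski--Ryll-Nardzewski applied to the nonempty closed-valued multifunction $(s,\omega) \mapsto \arg\max_u\{g_\theta(s, X_s(\omega), u) - \scpr{Z_s(\omega)}{u}\}$) delivers a progressively measurable selector $u^*_s \in \partial_z f_\theta(s, X_s, Z_s)$ along which the Fenchel inequality is an equality. Provided $u^* \in \mathcal{U}$, repeating the Girsanov computation yields $Y_t = \E_{\mathbb{Q}^{u^*}}[\xi + \int_t^T g_\theta(s, X_s, u^*_s) \dd s \mid \Fcal_t]$, which together with the upper bound closes the representation and identifies $u^*$ as an essentially optimal control.

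The main obstacle is verifying the admissibility $u^* \in \mathcal{U}$. The quadratic growth of $f_\theta$ in $z$ implies, via standard convex analysis, a linear growth bound $\norm{u^*_s} \le c(1 + \norm{Z_s})$ on any measurable selection from $\partial_z f_\theta$, and this, combined with $Z \in \mathcal{H}^2_{\mathrm{BMO}}$, makes $\int u^* \dd W$ a BMO-martingale. Showing that its Doléans-Dade exponential is then uniformly integrable requires Kazamaki's criterion (or the sharper reverse-H\"older characterization) applied to the BMO-norm estimate, and the precise control of constants---coming from the quadratic-growth constant $\alpha$ and the exponential-integrability threshold $\beta_0 > \alpha$ from Assumption \ref{ass:fwd_terminal_regularity}---is exactly the point where the sharp integrability hypothesis of Theorem \ref{thm:wellposedness_quadratic} enters essentially. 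This interplay between the growth of the subdifferential and the admissibility class $\mathcal{U}$ is the technical heart of the argument, while the rest of the proof is a clean assembly of Fenchel duality, Girsanov, and conditional expectation.
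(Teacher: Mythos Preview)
The paper does not supply its own proof of this proposition: it is stated with the parenthetical reference ``cf.\ \cite{ElKarouiPengQuenez1997}'' and followed immediately by an interpretive remark, with no argument given. Your proposal therefore cannot be compared to a proof in the paper; rather, you have reconstructed the classical argument that the citation points to. The strategy you outline --- Fenchel--Moreau biduality for the driver, a Girsanov rewriting of the BSDE under each $\mathbb{Q}^u$, the pointwise Fenchel inequality for the upper bound, and a measurable selection from $\partial_z f_\theta$ together with BMO/Kazamaki admissibility for the lower bound --- is exactly the standard route and is substantively correct.

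One point of care: your biduality formula $f_\theta(z)=\sup_u\{g_\theta(u)-\langle z,u\rangle\}$ yields the Fenchel inequality $f_\theta(z)+\langle z,u\rangle\ge g_\theta(u)$, not $f_\theta(z)-\langle z,u\rangle\ge g_\theta(u)$ as you wrote. With the paper's Girsanov convention $\frac{\dd\mathbb{Q}^u}{\dd\mathbb{P}}=\mathcal{E}(\int u\,\dd W)$, the drift after change of measure is $f_\theta(Z_s)-Z_s u_s$, so the signs do not match directly. This is a convention mismatch already latent in the paper's statement (the definition of $g_\theta$ and the sign in the stochastic exponential are not mutually consistent for a general, non-even $g_\theta$); it is repaired by taking the density $\mathcal{E}(-\int u\,\dd W)$, or equivalently by replacing $u$ with $-u$ throughout, after which your argument goes through verbatim. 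This is a bookkeeping issue, not a gap in the method.
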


\begin{remark}[Interpretation: A Universe of Ambiguous Brownian Motions]
\label{rem:girsanov_interpretation_revised}
Proposition \ref{prop:feynman_kac} provides the precise connection between the learned driver $f_\theta$ and the set of plausible models. By Girsanov's theorem, for each control $u \in \mathcal{U}$, the process $W^u_t \coloneqq W_t + \int_0^t u_s \dd s$ is a Brownian motion under the measure $\mathbb{Q}^u$. The Neural Expectation does not create a new type of stochastic process; instead, it models an agent who believes the world is driven by a Brownian motion but is uncertain \emph{which one}. The set $\{W^u_t\}_{u \in \mathcal{U}}$ represents this universe of plausible models. The learned driver $f_\theta$ parameterizes the cost function used to select the worst-case plausible model when evaluating outcomes. The ambiguity-induced drift identified earlier is precisely the effect of the optimal control $u^*_t$ that characterizes the worst-case change of measure at each instant. The general case with $y$-dependence is more complex and typically does not admit such a direct representation, often leading to representations involving second-order BSDEs (2BSDEs), a topic beyond the scope of this paper.
\end{remark}

\section{Well-Posedness of Fully Coupled Neural FBSDEs}
\label{sec:fbsde_main_result_section}

We now extend our analysis from the foundational case of a given forward process to the substantially more challenging setting of fully coupled Forward-Backward Stochastic Differential Equations (FBSDEs). In this framework, the coefficients of the forward process depend on the solution of the backward equation, a structure that is central to problems in stochastic control, differential games, and principal-agent problems under ambiguity. The system is defined for $s \in [t,T]$ by the coupled dynamics:
\begin{align}
    \dd X_s &= b_\theta(s, X_s, Y_s, Z_s) \dd s + \sigma_\theta(s, X_s, Y_s, Z_s) \dd W_s; \quad X_t = x \label{eq:fbsde_full_fwd} \\
    -\dd Y_s &= f_\theta(s, X_s, Y_s, Z_s) \dd s - Z_s \dd W_s; \quad \hspace{2.2cm} Y_T = g_\theta(X_T) \label{eq:fbsde_full_bwd}
\end{align}
Our proof strategy does not invent a new method for solving FBSDEs, but rather demonstrates that our class of neural network coefficients can be integrated into the powerful existing theory. We follow the celebrated four-step scheme pioneered by \cite{MaProtterYong1994}, which establishes a bijective correspondence between the FBSDE and a quasilinear parabolic Partial Differential Equation (PDE). For the quadratic growth setting of our driver $f_\theta$, we specifically leverage the profound extension of this method by \cite{Delarue2002}. Our contribution is to show that the structural conditions required by this advanced theory can be met by our neural network framework.

\begin{assumption}[Structural Conditions for Fully Coupled Neural FBSDEs]
\label{ass:fbsde_structure}
Let the coefficients $b_\theta, \sigma_\theta, f_\theta, g_\theta$ be parameterized by neural networks. For each fixed $\theta \in \Theta$, they satisfy the following regularity conditions:
\begin{enumerate}[label=(\roman*)]
    \item \textbf{Regularity of Forward Coefficients:} The drift $b_\theta: [0,T] \times \R^n \times \R \times \R^{1\times d} \to \R^n$ and diffusion $\sigma_\theta: [0,T] \times \R^n \times \R \times \R^{1\times d} \to \R^{n \times d}$ are continuous and uniformly globally Lipschitz continuous with respect to $(x,y,z)$.

    \item \textbf{Hybrid Regularity of Backward Driver:} The driver $f_\theta: [0,T] \times \R^n \times \R \times \R^{1 \times d} \to \R$ satisfies:
    \begin{itemize}
        \item It is uniformly globally Lipschitz continuous in the forward state variable $x$.
        \item For each fixed $(t,x)$, the mapping $(y,z) \mapsto f_\theta(t,x,y,z)$ satisfies the conditions of Assumption \ref{ass:f_theta_regularity_general}, namely: it is continuous, non-increasing in $y$, and exhibits at most quadratic growth in $z$.
    \end{itemize}

    \item \textbf{Lipschitz Terminal Condition:} The terminal function $g_\theta: \R^n \to \R$ is globally Lipschitz continuous.

    \item \textbf{Smallness Condition on Time Horizon:} There exists a constant $C > 0$, depending only on the Lipschitz constants of $b_\theta, \sigma_\theta, f_\theta, g_\theta$, such that $T \cdot C < 1$. This is a standard technical condition, identical to that in \cite{Delarue2002}, required to guarantee the uniqueness of the solution to the associated PDE by ensuring that a certain mapping constructed in the proof is a contraction.
\end{enumerate}
\end{assumption}

\begin{theorem}[Global Well-Posedness of Fully Coupled Neural FBSDEs]
\label{thm:fbsde_wellposedness}
Under Assumption \ref{ass:fbsde_structure}, for any initial condition $x_0 \in \R^n$ at time $t=0$, the fully coupled FBSDE system \eqref{eq:fbsde_full_fwd}-\eqref{eq:fbsde_full_bwd} admits a unique adapted solution $(X, Y, Z)$ in the space $\mathcal{S}^p([0,T]; \R^n) \times \mathcal{S}^\infty([0,T]; \R) \times \mathcal{H}^2_{\mathrm{BMO}}([0,T]; \R^{1 \times d})$ for any $p \ge 2$.
\end{theorem}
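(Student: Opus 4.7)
The plan is to invoke the four-step scheme of \cite{MaProtterYong1994}, in the form extended to the quadratic-in-$z$ setting by \cite{Delarue2002}. The strategy decouples the system \eqref{eq:fbsde_full_fwd}--\eqref{eq:fbsde_full_bwd} via a scalar function $u:[0,T]\times\R^n\to\R$, the so-called decoupling field, which will satisfy $Y_s = u(s,X_s)$ and $Z_s = (\nabla_x u(s,X_s))^\top \sigma_\theta(s, X_s, Y_s, Z_s)$. Our task is not to redevelop this deep machinery, but to verify that the regularity imposed by Assumption \ref{ass:fbsde_structure} matches exactly the hypotheses of Delarue's theorem, and then to check that the resulting triple $(X,Y,Z)$ lies in the claimed function spaces.

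First, I would associate to the FBSDE the quasilinear parabolic PDE
\[
\partial_s u + \tfrac{1}{2}\operatorname{tr}\bigl(\sigma_\theta \sigma_\theta^\top D_x^2 u\bigr) + \scpr{b_\theta}{\nabla_x u} + f_\theta\bigl(s, x, u, (\nabla_x u)^\top \sigma_\theta\bigr) = 0,\qquad u(T,x)=g_\theta(x),
\]
where in every coefficient the arguments $(y,z)$ are replaced by $(u(s,x),(\nabla_x u(s,x))^\top \sigma_\theta)$. Assumption \ref{ass:fbsde_structure}(i) supplies the Lipschitz regularity of $b_\theta,\sigma_\theta$ in $(x,y,z)$; Assumption \ref{ass:fbsde_structure}(ii) gives Lipschitz dependence of $f_\theta$ in $x$ together with exactly the monotonicity-and-quadratic-growth-in-$(y,z)$ structure isolated as Delarue's standing framework; Assumption \ref{ass:fbsde_structure}(iii) ensures Lipschitz terminal data; and Assumption \ref{ass:fbsde_structure}(iv) provides the smallness of the horizon. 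Under these hypotheses Delarue's main theorem delivers a unique classical solution $u\in C^{1,2}([0,T]\times\R^n)$ with $\nabla_x u$ uniformly bounded on $[0,T]\times\R^n$, with bound controlled by the Lipschitz constants of the coefficients and $g_\theta$.

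Second, given the Lipschitz decoupling field $u$, I would solve the now-decoupled forward SDE with coefficients $\tilde b(s,x)\coloneqq b_\theta(s,x,u(s,x),(\nabla_x u(s,x))^\top \sigma_\theta(s,x,\ldots))$ and $\tilde\sigma(s,x)\coloneqq \sigma_\theta(s,x,u(s,x),(\nabla_x u(s,x))^\top \sigma_\theta(s,x,\ldots))$. The uniform boundedness of $\nabla_x u$ together with Assumption \ref{ass:fbsde_structure}(i) makes $\tilde b,\tilde\sigma$ globally Lipschitz in $x$, so standard SDE theory produces a unique strong solution $X\in\mathcal{S}^p([0,T];\R^n)$ for every $p\ge 2$. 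Setting $Y_s\coloneqq u(s,X_s)$ and $Z_s\coloneqq (\nabla_x u(s,X_s))^\top\sigma_\theta(s,X_s,Y_s,Z_s)$ and applying Itô's formula to $u(s,X_s)$, the PDE satisfied by $u$ shows at once that $(X,Y,Z)$ solves the original FBSDE. The $\mathcal{S}^\infty$-bound for $Y$ comes from the bound on $\nabla_x u$ and the structure of the driver on the range of $X$, and once $Y\in\mathcal{S}^\infty$ is secured the BMO-regularity of $Z$ follows by the same argument as in Part 1(ii) of the proof of Theorem \ref{thm:wellposedness_quadratic}. Uniqueness is inherited from the uniqueness of the PDE solution: any adapted triple solving the FBSDE must generate, via the conditional formulation of the backward equation, a solution to the same quasilinear PDE, which therefore must coincide with $u$, pinning down $Y$ and then $X$ and $Z$.

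The main obstacle, and the heart of what must be imported from \cite{Delarue2002}, is the a priori bound on $\norm{\nabla_x u}_\infty$. Without it, the term $\tfrac{\alpha}{2}\norm{(\nabla_x u)^\top \sigma_\theta}^2$ inside the PDE cannot be controlled and the equation falls outside standard quasilinear parabolic theory. The smallness condition $T\cdot C<1$ in Assumption \ref{ass:fbsde_structure}(iv) is used precisely to make the Picard-type iteration that generates $u$ a strict contraction in a Banach space of Lipschitz functions on $[0,T]\times\R^n$, thereby producing both $u$ and the gradient bound simultaneously. The delicate bookkeeping is to ensure that the constant $C$ in that smallness condition is correctly identified in terms of the Lipschitz constants of $b_\theta,\sigma_\theta,f_\theta,g_\theta$ inherited from the neural architecture, and in particular that the local-Lipschitz-in-$y$ plus quadratic-in-$z$ structure of the neural driver, once specialized along the bounded range of $(Y_s,Z_s)$, produces the effective globally Lipschitz constants required by Delarue's framework.
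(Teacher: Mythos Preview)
Your proposal follows essentially the same route as the paper: the four-step scheme of Ma--Protter--Yong as extended by \cite{Delarue2002}, passing through the quasilinear PDE for the decoupling field $u$, solving the decoupled forward SDE with coefficients frozen along $u$, and reading off uniqueness from uniqueness of the PDE solution. The structure and the role of the smallness condition (iv) are identified correctly.

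The one substantive divergence is the regularity you claim for $u$. You assert that Delarue's theorem delivers a classical solution $u\in C^{1,2}$, and then verify that $(X,Y,Z)$ solves the FBSDE by applying It\^o's formula directly to $u(s,X_s)$. The paper, by contrast, works only with a spatially Lipschitz \emph{viscosity} solution---Assumption~\ref{ass:fbsde_structure} imposes no non-degeneracy on $\sigma_\theta\sigma_\theta^\top$---and must therefore invoke the nonlinear Feynman--Kac verification theorem from \cite{Delarue2002} rather than bare It\^o. Your verification step would not go through as written in the degenerate case. A second, smaller point: your justification of $Y\in\mathcal{S}^\infty$ from ``the bound on $\nabla_x u$ and the structure of the driver on the range of $X$'' is not watertight, since a Lipschitz $u$ composed with an unbounded process $X\in\mathcal{S}^p$ need not be bounded; the paper is also not fully explicit here, but the route is through the comparison machinery of Theorem~\ref{thm:wellposedness_quadratic} once one has the FBSDE solution in hand.
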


\begin{proof}
The proof rigorously follows the methodology established by \cite{Delarue2002}, which adapts the four-step scheme of \cite{MaProtterYong1994} to handle drivers with quadratic growth in $z$. We proceed by first associating a quasilinear PDE to the FBSDE system, then using the well-posedness of this PDE to establish existence and uniqueness for the FBSDE.

\textbf{Part 1: The Associated Quasilinear PDE}

Let us assume for a moment that a sufficiently regular solution $(X,Y,Z)$ exists and that the solution exhibits a Markovian structure, i.e., $Y_s = u(s, X_s)$ for some deterministic function $u:[0,T] \times \R^n \to \R$. Applying Itô's formula to $u(s,X_s)$ yields:
\begin{align*}
\dd u(s,X_s) &= \partial_t u(s,X_s)\dd s + (\nabla_x u(s,X_s))^T \dd X_s + \frac{1}{2}\mathrm{Tr}(D^2_x u(s,X_s) \dd\langle X, X\rangle_s) \\
&= \left[ \partial_t u + (\nabla_x u)^T b_\theta(s, X_s, Y_s, Z_s) + \frac{1}{2}\mathrm{Tr}(\sigma_\theta \sigma_\theta^T D^2_x u) \right] \dd s \\
&\quad+ (\nabla_x u)^T \sigma_\theta(s, X_s, Y_s, Z_s) \dd W_s,
\end{align*}
where all functions are evaluated at $(s,X_s)$.
On the other hand, the dynamics of the BSDE \eqref{eq:fbsde_full_bwd} state that $\dd Y_s = -f_\theta(s, X_s, Y_s, Z_s) \dd s + Z_s \dd W_s$. By identifying the martingale parts of $\dd Y_s$ and $\dd u(s,X_s)$, we obtain the crucial relationship for the control process:
\begin{equation}\label{eq:proof_z_pde_relation}
    Z_s = (\nabla_x u(s,X_s))^T \sigma_\theta(s, X_s, Y_s, Z_s).
\end{equation}
By matching the drift terms, substituting $Y_s = u(s, X_s)$, and using the expression for $Z_s$, we find that the function $u$ must satisfy the following terminal value problem for a quasilinear parabolic PDE:
\begin{equation}
\label{eq:pde_system_main}
\begin{cases}
    -\partial_t u(t,x) - \mathcal{L}[u](t,x) = 0, \quad &(t,x) \in [0,T) \times \R^n \\
    u(T,x) = g_\theta(x), \quad &x \in \R^n,
\end{cases}
\end{equation}
where the non-linear second-order operator $\mathcal{L}[u]$ is defined by
\[ \mathcal{L}[u](t,x) \coloneqq \scpr{\nabla_x u}{b_\theta(t,x,u,z)} + \frac{1}{2}\mathrm{Tr}(\sigma_\theta(t,x,u,z)\sigma_\theta^T(t,x,u,z) D^2_x u) + f_\theta(t,x,u,z), \]
with the arguments understood to be evaluated at $(t,x,u(t,x),z(t,x))$, and where $z(t,x)$ is the solution to the implicit equation $z = (\nabla_x u(t,x))^T \sigma_\theta(t,x,u(t,x),z)$.

The structural conditions in Assumption \ref{ass:fbsde_structure} are precisely what is needed for the well-posedness of this PDE. Specifically, the Lipschitz continuity of the coefficients ensures the operator has a standard structure; the quadratic growth of $f_\theta$ in $z$ translates into quadratic growth of $\mathcal{L}[u]$ in the gradient term $\nabla_x u$; the monotonicity of $f_\theta$ in $y$ provides the crucial comparison principle for the PDE; and the smallness condition on $T$ ensures uniqueness. Under these conditions, \cite{Delarue2002} proves the existence of a unique viscosity solution $u \in C([0,T]\times\R^n)$ which is spatially Lipschitz continuous.

\textbf{Part 2: From PDE to FBSDE (Existence)}

Let $u(t,x)$ be the unique spatially Lipschitz viscosity solution to the PDE \eqref{eq:pde_system_main}. We now construct a solution $(X,Y,Z)$ to the FBSDE.
\begin{enumerate}
    \item \textbf{Define $(X, Y, Z)$:} First, for each $(s,x,\omega)$, the equation $z = (\nabla_x u(s,x))^T \sigma_\theta(s, x, u(s,x), z)$ defines $z$ implicitly. Since $\sigma_\theta$ is globally Lipschitz in $z$ with constant $L_\sigma$ (by Assumption \ref{ass:fbsde_structure}(i)), the map $\Phi(z) = (\nabla_x u)^T \sigma_\theta(\dots,z)$ is a contraction if $L_\sigma \norm{\nabla_x u} < 1$. This can be ensured under a smallness condition on the Lipschitz constant of $\sigma$ or, more generally, is handled by the theory in \cite{Delarue2002}. Let $z(s,x)$ be this unique solution. Now define the decoupled coefficients:
    \begin{align*}
        \bar{b}(s,x) &\coloneqq b_\theta(s, x, u(s,x), z(s,x)) \\
        \bar{\sigma}(s,x) &\coloneqq \sigma_\theta(s, x, u(s,x), z(s,x)).
    \end{align*}
    Since $u$ is spatially Lipschitz, $\nabla_x u$ is bounded a.e., which implies that $\bar{b}$ and $\bar{\sigma}$ are globally Lipschitz in $x$. Thus, the SDE
    \[ \dd X_s = \bar{b}(s,X_s) \dd s + \bar{\sigma}(s,X_s) \dd W_s, \quad X_0 = x_0, \]
    admits a unique strong solution $X \in \mathcal{S}^p([0,T];\R^n)$ for any $p \ge 2$.
    
    \item We then define the pair $(Y,Z)$ via the Markovian ansatz:
    \[ Y_s \coloneqq u(s, X_s) \quad \text{and} \quad Z_s \coloneqq z(s,X_s). \]
    
    \item \textbf{Verification:} The non-linear Feynman-Kac formula, established for this setting in \cite{Delarue2002}, states that if $u$ is a viscosity solution to the PDE \eqref{eq:pde_system_main}, then the triplet $(X,Y,Z)$ constructed as above is indeed a solution to the FBSDE system \eqref{eq:fbsde_full_fwd}-\eqref{eq:fbsde_full_bwd}. This proves the existence of a solution.
\end{enumerate}

\textbf{Part 3: From FBSDE to PDE (Uniqueness)}

Let $(\hat{X}, \hat{Y}, \hat{Z})$ be any other solution in the specified function space. We show it must coincide with the solution $(X,Y,Z)$ constructed in Part 2.
\begin{enumerate}
    \item Define the function $\hat{u}(t,x) \coloneqq \hat{Y}_t^{t,x}$, where $\hat{Y}^{t,x}$ is the solution to the BSDE part of the system on $[t,T]$ when the forward process is started with $\hat{X}_t=x$.
    
    \item The core result of the four-step scheme (see \cite{MaProtterYong1994} and its extension in \cite{Delarue2002}) is to prove that this function $\hat{u}$ must be a viscosity solution to the very same PDE \eqref{eq:pde_system_main}.
    
    \item By the uniqueness of the spatially Lipschitz viscosity solution to the PDE, established in Part 1, we must have $\hat{u}(t,x) = u(t,x)$ for all $(t,x)$.
    
    \item This implies $\hat{Y}_s = \hat{u}(s, \hat{X}_s) = u(s, \hat{X}_s)$. Substituting this relationship into the SDE for $\hat{X}$ reveals that $\hat{X}$ must satisfy:
    \[ \dd \hat{X}_s = b_\theta(s, \hat{X}_s, u(s,\hat{X}_s), z(s,\hat{X}_s)) \dd s + \sigma_\theta(s, \hat{X}_s, u(s,\hat{X}_s), z(s,\hat{X}_s)) \dd W_s. \]
    This is the same SDE that the process $X$ from Part 2 satisfies. By pathwise uniqueness of the solution to this SDE (due to Lipschitz coefficients), we must have $\hat{X}_s = X_s$ for all $s \in [0,T]$ a.s.
    
    \item Since $\hat{X}=X$, it follows immediately that $\hat{Y}_s = u(s,\hat{X}_s) = u(s,X_s) = Y_s$.
    
    \item Finally, with $\hat{X}=X$ and $\hat{Y}=Y$, the two processes $\hat{Z}$ and $Z$ must both solve the same BSDE. The uniqueness of the $Z$ component for a given driver and terminal condition (established in the proof of Theorem \ref{thm:wellposedness_quadratic}) implies that $\hat{Z} = Z$ in $\mathcal{H}^2([0,T];\R^{1 \times d})$.
\end{enumerate}
This establishes that the solution is unique, completing the proof.
\end{proof}

\section{Asymptotic Analysis of Mean-Field Neural Expectations}
\label{sec:asymptotic_analysis}

We now study the behavior of the Neural Expectation operator when applied to a large system of interacting particles, a setting central to modern mathematical finance and economics (e.g., in mean-field games or models of systemic risk). We demonstrate that as the number of agents grows, the complex, high-dimensional system converges to a tractable mean-field limit (a Law of Large Numbers) and we characterize the fluctuations around this limit (a Central Limit Theorem). This analysis provides the tools to understand the macroscopic behavior of large populations governed by learned ambiguity models. Let $\Pcal_2(\R^d)$ be the space of probability measures on $\R^d$ with a finite second moment, endowed with the 2-Wasserstein metric $\wass$.

\subsection{The N-Particle and Mean-Field Systems}
We consider a system of $N$ exchangeable particles where the state $X^{i,N}_t$ of particle $i$ evolves according to:
\begin{equation} \label{eq:n_particle_sde}
\dd X^{i,N}_t = b(t, X^{i,N}_t, \mu^N_t) \dd t + \sigma(t, X^{i,N}_t, \mu^N_t) \dd W^i_t, \quad X^{i,N}_0 \sim \mu_0, \quad i=1,\dots,N,
\end{equation}
where $\{W^i_t\}_{i=1}^N$ are independent $d$-dimensional Brownian motions and $\mu^N_t \coloneqq \frac{1}{N} \sum_{j=1}^N \delta_{X^{j,N}_t}$ is the empirical measure of the system. The associated non-linear expectation for particle $i$, which we denote by the process $Y^{i,N}$, is defined via the solution to the BSDE:
\begin{equation} \label{eq:n_particle_bsde}
- \dd Y^{i,N}_t = f_\theta(t, X^{i,N}_t, Y^{i,N}_t, Z^{i,N}_t, \mu^N_t) \dd t - Z^{i,N}_t \dd W^i_t, \quad Y^{i,N}_T = g(X^{i,N}_T, \mu^N_T).
\end{equation}
As $N \to \infty$, we expect the system to be described by the following McKean-Vlasov FBSDE, which governs a representative particle $(\bar{X}_t, \bar{Y}_t, \bar{Z}_t)$:
\begin{align}
\dd \bar{X}_t &= b(t, \bar{X}_t, \mu_t) \dd t + \sigma(t, \bar{X}_t, \mu_t) \dd W_t, \quad \Law(\bar{X}_0) = \mu_0 \label{eq:mckean_sde} \\
- \dd \bar{Y}_t &= f_\theta(t, \bar{X}_t, \bar{Y}_t, \bar{Z}_t, \mu_t) \dd t - \bar{Z}_t \dd W_t, \quad \bar{Y}_T = g(\bar{X}_T, \mu_T), \label{eq:mckean_bsde}
\end{align}
where $\mu_t \coloneqq \Law(\bar{X}_t)$ is the law of the state of the representative particle at time $t$.

\begin{assumption}[Regularity for Mean-Field Coefficients] \label{ass:mckean_regularity}
The coefficients $b, \sigma, f_\theta, g$ are continuous in all their arguments.
\begin{enumerate}[label=(\roman*)]
    \item \textbf{Lipschitz continuity:} The functions $b(t,x,\mu)$, $\sigma(t,x,\mu)$, and $g(x,\mu)$ are globally Lipschitz continuous in the state and measure arguments. That is, there exists a constant $L>0$ such that for any $(t,x,\mu)$ and $(t,x',\mu')$,
    \begin{align*}
        \norm{b(t,x,\mu) - b(t,x',\mu')} + \norm{\sigma(t,x,\mu) - \sigma(t,x',\mu')}_{\mathrm{Frob}} &\le L(\norm{x-x'} + \wass(\mu,\mu')), \\
        \abs{g(x,\mu) - g(x',\mu')} &\le L(\norm{x-x'} + \wass(\mu,\mu')).
    \end{align*}
    \item \textbf{Hybrid Regularity of the driver $f_\theta$:} The driver $f_\theta(t,x,y,z,\mu)$ is globally Lipschitz in the state $x$ and measure $\mu$ arguments, uniformly in $(y,z)$. For any fixed $(t,x,\mu)$, the mapping $(y,z) \mapsto f_\theta(t,x,y,z,\mu)$ satisfies the quadratic growth, uniform local Lipschitz in $y$, and monotonicity in $y$ conditions of Assumption \ref{ass:f_theta_regularity_general}.
\end{enumerate}
\end{assumption}

\subsection{A Law of Large Numbers (Propagation of Chaos)}

The first fundamental result is that the $N$-particle system indeed converges to the McKean-Vlasov dynamics, a phenomenon known as propagation of chaos.

\begin{theorem}[LLN for Neural Expectations] \label{thm:lln_mckean_vlasov}
Let Assumption \ref{ass:mckean_regularity} hold. Let the initial positions be i.i.d., $X^{i,N}_0 = \bar{X}^i_0$, with $\E[\norm{\bar{X}^i_0}^4] < \infty$. Let $(X^{i,N}, Y^{i,N}, Z^{i,N})$ be the solution for particle $i$ in the $N$-particle system, and let $(\bar{X}^i, \bar{Y}^i, \bar{Z}^i)$ be an i.i.d. copy of the McKean-Vlasov solution \eqref{eq:mckean_sde}-\eqref{eq:mckean_bsde}, driven by the same Brownian motion $W^i$. Then for each $i=1,\dots,N$:
\[ \lim_{N \to \infty} \E\left[ \sup_{t \in [0,T]} \norm{X^{i,N}_t - \bar{X}^i_t}^2 + \sup_{t \in [0,T]} \abs{Y^{i,N}_t - \bar{Y}^i_t}^2 + \int_0^T \norm{Z^{i,N}_t - \bar{Z}^i_t}^2 \dd t \right] = 0. \]
\end{theorem}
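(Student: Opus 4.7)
The plan is to realise both systems on the same probability space via a synchronous coupling---each particle $X^{i,N}$ is driven by the same Brownian motion $W^i$ as its mean-field copy $\bar X^i$---and then to control the pathwise discrepancies by a forward-then-backward Gronwall scheme. The critical preparatory step is to establish uniform-in-$N$ a priori bounds that activate the quadratic BSDE machinery. Applying the comparison argument from Part~1 of the proof of Theorem~\ref{thm:wellposedness_quadratic} to BSDE~\eqref{eq:n_particle_bsde} for each $i$, and using Assumption~\ref{ass:mckean_regularity} to dominate the $\mu$-dependence of $f_\theta$ and $g$ by the second moment of the empirical measure (itself uniformly controlled by standard moment estimates for McKean--Vlasov SDEs with Lipschitz coefficients and a fourth-moment initial datum), I obtain a constant $C_\infty$, independent of $N$ and $i$, with $\norm{Y^{i,N}}_{\mathcal{S}^\infty}\vee\norm{\bar Y^i}_{\mathcal{S}^\infty}\le C_\infty$. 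As in Part~1(ii) of Theorem~\ref{thm:wellposedness_quadratic} this forces a uniform BMO bound $\norm{Z^{i,N}}_{\mathrm{BMO}}\vee\norm{\bar Z^i}_{\mathrm{BMO}}\le C_{\mathrm{BMO}}$. These two uniform bounds are the key technical input.

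For the forward component, set $\delta X^i_t := X^{i,N}_t-\bar X^i_t$. Itô's formula on $\norm{\delta X^i_t}^2$, the Lipschitz continuity of $b,\sigma$ in $(x,\mu)$, and the Burkholder--Davis--Gundy inequality yield
\[
\E\bigl[\sup_{s\le t}\norm{\delta X^i_s}^2\bigr]\le C\int_0^t\E[\norm{\delta X^i_s}^2]\,\dd s+C\int_0^t\E[\wass^2(\mu^N_s,\mu_s)]\,\dd s.
\]
Inserting the intermediate empirical measure $\bar\mu^N_s:=N^{-1}\sum_j\delta_{\bar X^j_s}$, using exchangeability to bound $\E[\wass^2(\mu^N_s,\bar\mu^N_s)]\le\E[\norm{\delta X^1_s}^2]$, and invoking the quantitative empirical approximation estimate $\E[\wass^2(\bar\mu^N_s,\mu_s)]\to 0$ (which uses the fourth-moment hypothesis and is standard in the mean-field setting, see \cite{CarmonaDelarue2018}), a Gronwall argument gives
\[
\E\bigl[\sup_{t\le T}\norm{\delta X^i_t}^2\bigr]\le\varepsilon_N^X,\qquad\varepsilon_N^X\to 0.
\]

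The delicate step is the backward stability. Writing $\delta Y^i:=Y^{i,N}-\bar Y^i$ and $\delta Z^i:=Z^{i,N}-\bar Z^i$, the pair solves a BSDE whose driver difference $\Delta f^i_s$ I decompose into three pieces: a $y$-increment bounded by $L_{C_\infty}\abs{\delta Y^i_s}$ via the uniform local Lipschitz condition on the common $\mathcal{S}^\infty$-ball provided by step one; a $z$-increment, rewritten by the fundamental theorem of calculus as $\scpr{\beta^i_s}{\delta Z^i_s}$ with a process $\beta^i$ satisfying $\norm{\beta^i_s}\le C(\norm{Z^{i,N}_s}+\norm{\bar Z^i_s})$ (hence BMO by step one); and a residual controlled by $C(\norm{\delta X^i_s}+\wass(\mu^N_s,\mu_s))$ via the Lipschitz property in $(x,\mu)$. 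The linear-in-$\delta Z$ term is absorbed by a Girsanov change of measure with density $\mathcal{E}(\int\beta^i\cdot\dd W^i)_T$; Kazamaki's criterion applied to the uniform BMO bound on $\beta^i$ guarantees that this density has $L^p$-moments uniform in $N$. Under $\mathbb{Q}^i$ the equation is linear in $(\delta Y^i,\delta Z^i)$, and the standard BMO stability estimate (cf.\ \cite{BriandEtAl2003}, \cite{Kobylanski2000}) combined with Hölder's inequality to return to $\Prob$ yields
\[
\E\Bigl[\sup_{t\le T}\abs{\delta Y^i_t}^2+\int_0^T\norm{\delta Z^i_s}^2\,\dd s\Bigr]\le C\,\E\Bigl[\abs{\delta X^i_T}^2+\wass^2(\mu^N_T,\mu_T)+\int_0^T\bigl(\norm{\delta X^i_s}^2+\wass^2(\mu^N_s,\mu_s)\bigr)\dd s\Bigr].
\]
Combined with the forward estimate and the empirical approximation rate, the right-hand side vanishes as $N\to\infty$, which is the claim.

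The main obstacle is precisely this backward step: classical propagation-of-chaos stability relies on drivers that are globally Lipschitz in $(y,z)$, whereas here one must simultaneously accommodate quadratic growth in $z$, measure-valued dependence, and merely local Lipschitz continuity in $y$. The uniform $\mathcal{S}^\infty$ and BMO bounds from step one are precisely what legitimises the Girsanov linearisation with constants independent of $N$, and what lets the $y$-Lipschitz constant be replaced by the finite $L_{C_\infty}$ governing the bounded domain common to all $Y^{i,N}$ and $\bar Y^i$.
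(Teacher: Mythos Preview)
Your proposal is correct and follows essentially the same route as the paper: establish uniform-in-$N$ $\mathcal{S}^\infty$ and BMO bounds by comparison, handle the forward difference via Lipschitz/Gronwall and convergence of the empirical measure, and then close the backward difference by a quadratic-BSDE stability estimate. The paper simply cites this last estimate as a black box (writing the bound in terms of $\abs{\delta f_s^0}$ with $\delta Z$ set to zero), whereas you spell out the underlying mechanism---linearise the $z$-increment via the fundamental theorem of calculus, absorb it by a Girsanov change of measure justified by Kazamaki's criterion and the uniform BMO bound, and return to $\Prob$ by reverse H\"older---so your exposition is in fact more transparent about why the constants are independent of $N$.
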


\begin{proof}
The proof is a stability argument that leverages the contractive properties of the system dynamics. By exchangeability, we analyze particle $i=1$ and drop the superscript $i$. Let $\delta X_t \coloneqq X^N_t - \bar{X}_t$, $\delta Y_t \coloneqq Y^N_t - \bar{Y}_t$, $\delta Z_t \coloneqq Z^N_t - \bar{Z}_t$, and let $\mu_t = \Law(\bar{X}_t)$.

\textbf{Step 1: Convergence of the Forward Process.}
Applying Itô's formula to $\norm{\delta X_t}^2$ and taking expectations, we use the Lipschitz property of $b$ and $\sigma$ to get:
\begin{align*}
\E[\norm{\delta X_t}^2] &= 2\int_0^t \E[\scpr{\delta X_s}{b(s,X^N_s,\mu^N_s) - b(s,\bar{X}_s,\mu_s)}] \dd s \\
&\quad + \int_0^t \E[\norm{\sigma(s,X^N_s,\mu^N_s) - \sigma(s,\bar{X}_s,\mu_s)}_{\mathrm{Frob}}^2] \dd s \\
&\le 2 \int_0^t \E\left[ \norm{\delta X_s}\left(L\norm{\delta X_s} + L\wass(\mu^N_s, \mu_s)\right) \right] \dd s + \int_0^t L^2\E\left[(\norm{\delta X_s} + \wass(\mu^N_s, \mu_s))^2\right] \dd s \\
&\le C_L \int_0^t \left( \E[\norm{\delta X_s}^2] + \E[\wass^2(\mu^N_s, \mu_s)] \right) \dd s.
\end{align*}
where $C_L$ is a constant depending on the Lipschitz constant $L$. By Gronwall's inequality, we have $\E[\sup_{t \in [0,T]}\norm{\delta X_t}^2] \le C_T \int_0^T \E[\wass^2(\mu^N_s, \mu_s)] \dd s$, where a standard argument involving the BDG inequality has been used to control the supremum. A cornerstone result of mean-field theory (see, e.g., \cite{CarmonaDelarue2018}) states that for i.i.d. initial conditions with a finite fourth moment, the empirical measure converges: $\lim_{N\to\infty} \E[\sup_{t \in [0,T]} \wass^2(\mu^N_t, \mu_t)] = 0$.
By the Dominated Convergence Theorem, it follows that $\lim_{N\to\infty} \E[\sup_{t \in [0,T]} \norm{\delta X_t}^2] = 0$.

\textbf{Step 2: Stability of the BSDE Solution.}
The pair $(\delta Y_t, \delta Z_t)$ solves a BSDE with driver $\delta f_t = f_\theta(t, X^N_s, Y^N_s, Z^N_s, \mu^N_s) - f_\theta(t, \bar{X}_s, \bar{Y}_s, \bar{Z}_s, \mu_s)$ and terminal condition $\delta Y_T = g(X^N_T, \mu^N_T) - g(\bar{X}_T, \mu_T)$.
By a comparison argument analogous to Theorem \ref{thm:wellposedness_quadratic}, both $Y^N$ and $\bar{Y}$ are uniformly bounded in $\mathcal{S}^\infty([0,T];\R)$, independently of $N$, say by a constant $M$. This is crucial as it allows us to use the uniform local Lipschitz property of $f_\theta$ with respect to $y$ on the compact domain $[-M, M]$. The standard stability estimate for quadratic BSDEs (e.g., \cite{ElKarouiPengQuenez1997}) yields:
\[ \E\left[\sup_{t \in [0,T]} \abs{\delta Y_t}^2 + \int_0^T \norm{\delta Z_s}^2 \dd s\right] \le C \E\left[ \abs{\delta Y_T}^2 + \left(\int_0^T \abs{\delta f_s^0} \dd s\right)^2 \right], \]
where $\delta f^0_s$ is the difference of drivers with $\delta Z_s$ set to zero. For the terminal condition, the Lipschitz property of $g$ gives $\E[\abs{\delta Y_T}^2] \le L^2 \E[\norm{\delta X_T}^2 + \wass^2(\mu^N_T, \mu_T)]$, which converges to 0 by Step 1. For the driver term, we use the Lipschitz properties of $f_\theta$ and the uniform bound on $Y^N, \bar{Y}$:
\begin{align*}
\abs{\delta f_s^0} &\le \abs{f_\theta(s, X^N_s, Y^N_s, \bar{Z}_s, \mu^N_s) - f_\theta(s, \bar{X}_s, \bar{Y}_s, \bar{Z}_s, \mu_s)} \\
&\le L_M\abs{\delta Y_s} + L_x\norm{\delta X_s} + L_\mu\wass(\mu^N_s, \mu_s).
\end{align*}
Plugging these bounds into the stability inequality, applying Cauchy-Schwarz to the integral term, and using Gronwall's inequality for $\E[\sup_t |\delta Y_t|^2]$ shows that its convergence to zero is driven by the convergence of $\E[\sup_t \norm{\delta X_t}^2]$ and $\E[\sup_t \wass^2(\mu^N_t, \mu_t)]$. The convergence of the $\mathcal{H}^2$ norm of $\delta Z$ then follows.
\end{proof}

\subsection{A Central Limit Theorem}
We now characterize the fluctuations of the $N$-particle system around its mean-field limit. Define the fluctuation processes for particle $i$:
$U^{i,N}_t \coloneqq \sqrt{N}(X^{i,N}_t - \bar{X}^i_t)$,
$V^{i,N}_t \coloneqq \sqrt{N}(Y^{i,N}_t - \bar{Y}^i_t)$,
$\mathcal{Z}^{i,N}_t \coloneqq \sqrt{N}(Z^{i,N}_t - \bar{Z}^i_t)$.

\begin{assumption}[Differentiability of Coefficients]\label{ass:mckean_diff}
The coefficients $b, \sigma, f_\theta, g$ satisfy Assumption \ref{ass:mckean_regularity} and are continuously differentiable. Specifically:
\begin{enumerate}[label=(\roman*)]
\item They are Fréchet differentiable with respect to the state variables $(x,y,z)$.
\item They are Lions differentiable with respect to the measure variable $\mu$. The measure derivative (or linear functional derivative) of a function $\phi(t, x,\mu)$ is a function $\frac{\delta \phi}{\delta m}(t, x,\mu, \tilde{x})$ such that for a small perturbation $\mu^\epsilon = (1-\epsilon)\mu + \epsilon \delta_y$, we have $\phi(x, \mu^\epsilon) = \phi(x,\mu) + \epsilon \frac{\delta \phi}{\delta m}(x,\mu,y) + o(\epsilon)$. We use the notation $\bar{D}_\mu \phi(t, x,\mu, \tilde{x}) \coloneqq \frac{\delta \phi}{\delta m}(t, x,\mu,\tilde{x})$.
\end{enumerate}
All derivatives are assumed to be bounded and continuous.
\end{assumption}

\begin{remark}[On the Structure of the Limiting Fluctuation Equation]
The system we analyze in \eqref{eq:n_particle_sde}-\eqref{eq:n_particle_bsde} has a feed-forward structure: the forward SDE for the states $X^{i,N}$ does not depend on the backward components $(Y^{i,N}, Z^{i,N})$. This is a common and important special case. As a result, the limiting equation for the forward fluctuation process $U_t$ will be a self-contained McKean-Vlasov SDE, which then acts as a driver for the backward fluctuation equation for $(V_t, \mathcal{Z}_t)$. In a more general fully-coupled mean-field model, the equation for $U_t$ would also contain terms depending on $(V_t, \mathcal{Z}_t)$.
\end{remark}

\begin{theorem}[CLT for Neural Expectations]\label{thm:clt}
Let Assumptions \ref{ass:mckean_regularity} and \ref{ass:mckean_diff} hold. Assume the initial fluctuations $U^{i,N}_0$ are i.i.d. and converge in law to a random variable $U_0$ with finite second moment. As $N \to \infty$, the sequence of fluctuation processes $(U^{i,N}, V^{i,N}, \mathcal{Z}^{i,N})_{N \ge 1}$ converges in law, for each $i$, to the unique solution $(U, V, \mathcal{Z})$ of the following linear McKean-Vlasov FBSDE. All derivatives are evaluated along the mean-field path $(t, \bar{X}_t, \bar{Y}_t, \bar{Z}_t, \mu_t)$:
\begin{align}
\dd U_t &= \left( D_x b_t U_t + \E'\left[\bar{D}_\mu b(t, \bar{X}_t, \mu_t, \bar{X}'_t) U'_t\right] \right) \dd t \notag \\
&\quad+ \left( D_x \sigma_t U_t + \E'\left[\bar{D}_\mu \sigma(t, \bar{X}_t, \mu_t, \bar{X}'_t) U'_t\right] \right) \dd W_t, \label{eq:clt_fwd} \\
-\dd V_t &= \left( D_x f_t U_t + \partial_y f_t V_t + \scpr{D_z f_t}{\mathcal{Z}_t} + \E'\left[\bar{D}_\mu f(t, \bar{X}_t, \mu_t, \bar{X}'_t) U'_t \right] \right) \dd t - \mathcal{Z}_t \dd W_t, \label{eq:clt_bwd}
\end{align}
with initial condition $U_0$ and terminal condition $V_T = D_x g_T U_T + \E'\left[\bar{D}_\mu g(T, \bar{X}_T, \mu_T, \bar{X}'_T) U'_T\right]$. Here, $(\bar{X}', U', W')$ is an independent copy of the limit system, and $\E'[...]$ denotes expectation with respect to the law of this independent copy.
\end{theorem}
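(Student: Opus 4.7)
The plan is to exploit the feed-forward structure noted in the remark before the theorem: the forward SDE \eqref{eq:n_particle_sde} does not depend on $(Y^{i,N}, Z^{i,N})$, so I would first prove convergence of the forward fluctuations $U^{i,N} \to U$ in $\mathcal{S}^2$, and then use this to drive the convergence of the backward fluctuations $(V^{i,N}, \mathcal{Z}^{i,N})$ via BSDE stability. Throughout, I would work in the natural coupling in which the $N$-particle system and the independent McKean-Vlasov copies share the same Brownian motions $W^i$. For the forward part, writing each coefficient difference $b(t,X^{i,N}_t,\mu^N_t) - b(t,\bar{X}^i_t,\mu_t)$ as a line integral using Fréchet differentiability in $x$ and Lions differentiability in $\mu$ (Assumption \ref{ass:mckean_diff}), and multiplying by $\sqrt{N}$, yields an approximate linear McKean-Vlasov SDE for $U^{i,N}$. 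The measure-derivative piece, $\sqrt{N}\int \bar{D}_\mu b(t,\bar{X}^i_t,\mu_t,\tilde{x})\,\dd(\mu^N_t - \mu_t)(\tilde{x})$, would be processed by a further first-order expansion around the mean-field samples $\bar{X}^j_t$, producing the term $\frac{1}{N}\sum_j \bar{D}_\mu b(t,\bar{X}^i_t,\mu_t,\bar{X}^j_t) U^{j,N}_t$ plus residuals controlled by propagation of chaos (Theorem \ref{thm:lln_mckean_vlasov}). A conditional law of large numbers applied to the exchangeable family $(\bar{X}^j, U^{j,N})$ then identifies the limit as $\E'[\bar{D}_\mu b(t,\bar{X}_t,\mu_t,\bar{X}'_t) U'_t]$, and convergence $U^{i,N} \to U$ in $\mathcal{S}^2$ follows from a Gronwall estimate once a uniform $L^2$ bound on $U^{i,N}$ has been established.

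For the backward part, the critical preliminary observation is that both $Y^{i,N}$ and $\bar{Y}^i$ are bounded in $\mathcal{S}^\infty$ uniformly in $N$: one applies the comparison argument of Theorem \ref{thm:wellposedness_quadratic} to a common dominating BSDE whose coefficients are $N$-independent, from which it also follows that $Z^{i,N}$ and $\bar{Z}^i$ are uniformly bounded in $\mathcal{H}^2_{\mathrm{BMO}}$. This uniform bound is indispensable: it restricts $y$ to a compact interval $[-M,M]$ on which the uniform local Lipschitz condition (Assumption \ref{ass:f_theta_regularity_general}(iii)) becomes genuinely global, and it allows the quadratic $z$-term to be linearized via the Kazamaki--Girsanov machinery (Remark \ref{rem:bmo_centrality}). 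With this in hand, I would expand the driver and terminal condition differences along the mean-field path as in the forward argument, obtaining a linear BSDE for $(V^{i,N}, \mathcal{Z}^{i,N})$ that approximates \eqref{eq:clt_bwd} with a residual driven by the forward error and higher-order Taylor remainders. Standard $L^p$ stability estimates for linear BSDEs with BMO coefficients then deliver convergence in $\mathcal{S}^2 \times \mathcal{H}^2$, which upgrades to convergence in law of the full triple $(U^{i,N}, V^{i,N}, \mathcal{Z}^{i,N})$.

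Well-posedness of the limiting linear McKean-Vlasov FBSDE \eqref{eq:clt_fwd}-\eqref{eq:clt_bwd} is the last ingredient; since all derivative coefficients are bounded and continuous by Assumption \ref{ass:mckean_diff}, this reduces to a standard fixed-point argument in a weighted product space, in the spirit of the scheme of \cite{CarmonaDelarue2018}. The main obstacle I anticipate lies in the backward expansion: the $z$-fluctuation $\mathcal{Z}^{i,N} = \sqrt{N}(Z^{i,N} - \bar{Z}^i)$ is only BMO-bounded, not uniformly bounded pathwise, so the second-order Taylor remainders in $z$ arising from the quadratic growth of $f_\theta$ cannot be controlled by naive Taylor bounds. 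Handling them requires absorbing the quadratic $z$-term into a Girsanov-type change of measure so that the remainder can be estimated under a linearized dynamics, and then quantifying that the resulting error vanishes at a rate compatible with the $\sqrt{N}$ scaling of the fluctuations. This delicate interplay between the quadratic-BSDE structure and the CLT scaling is the hardest step and is the analogue, in our quadratic-driver setting, of the corresponding technical passage in the classical CLT for McKean-Vlasov systems treated in \cite{CarmonaDelarue2018}.
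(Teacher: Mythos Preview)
Your plan follows essentially the same route as the paper: decompose the scaled coefficient differences via first-order Taylor expansion in the state variable and Lions differentiation in the measure, rewrite the measure-derivative piece as an empirical average over the i.i.d.\ mean-field copies, and identify the limit via a law of large numbers; both you and the paper defer the technical closure to \cite{CarmonaDelarue2018}. The one substantive difference is the mode of convergence: the paper argues via tightness of the laws of $(U^{i,N},V^{i,N},\mathcal{Z}^{i,N})$ together with uniqueness of the limiting linear McKean--Vlasov FBSDE, whereas you aim for strong $\mathcal{S}^2$ convergence in the synchronous coupling through a Gronwall argument. The tightness route is the safer one here, because the empirical-measure fluctuation $\sqrt{N}(\bar{\mu}^N_t-\mu_t)$ contributes a term $\tfrac{1}{\sqrt{N}}\sum_j\bigl[\bar{D}_\mu b(\dots,\bar{X}^j_t)-\E\bar{D}_\mu b\bigr]$ that is $O(1)$ in $L^2$ and only vanishes after integration against the limit dynamics, so a direct Gronwall-to-$\mathcal{S}^2$ argument requires care. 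On the other hand, your discussion of the backward component---the uniform $\mathcal{S}^\infty$/BMO bounds, the restriction of $y$ to a compact set to activate the local Lipschitz constant, and the need to linearize the quadratic $z$-term via a Girsanov change of measure before controlling the $\sqrt{N}$-scaled remainder---is considerably more detailed than the paper's own sketch, which treats only the forward drift explicitly and relegates the rest to the cited reference.
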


\begin{proof}
The proof relies on establishing the convergence of the finite-$N$ fluctuation dynamics to the target linear McKean-Vlasov system. For brevity, we focus on the drift terms; the diffusion and backward components follow a similar logic. The full proof requires demonstrating tightness of the laws of $(U^{i,N}, V^{i,N}, \mathcal{Z}^{i,N})$ and uniqueness of the limit point. We refer to the canonical source (e.g., \cite{CarmonaDelarue2018}) for this standard but technical procedure.

Let us analyze the drift of $U^{i,N}_t = \sqrt{N}(X^{i,N}_t - \bar{X}^i_t)$. Its dynamics are driven by $\sqrt{N}[b(t, X^{i,N}_t, \mu^N_t) - b(t, \bar{X}^i_t, \mu_t)]$. We decompose this term:
\[ \underbrace{\sqrt{N} [b(t, X^{i,N}_t, \mu^N_t) - b(t, \bar{X}^i_t, \mu^N_t)]}_{\text{(I)}} + \underbrace{\sqrt{N} [b(t, \bar{X}^i_t, \mu^N_t) - b(t, \bar{X}^i_t, \mu_t)]}_{\text{(II)}}. \]
\textbf{Term (I):} By Taylor's theorem and the differentiability of $b$:
\[ (\text{I}) = D_x b(t, \bar{X}^i_t, \mu^N_t) U^{i,N}_t + R_x^{i,N}(t), \]
where the remainder $R_x^{i,N}$ vanishes in $L^1$ as $N \to \infty$. As $D_x b$ is continuous and $\mu^N_t \to \mu_t$, this term converges to $D_x b(t, \bar{X}^i_t, \mu_t) U^{i}_t$.

\textbf{Term (II):} This term captures the mean-field interaction. Using the Lions derivative:
\[ (\text{II}) = \sqrt{N} \int_{\R^d} \bar{D}_\mu b(t, \bar{X}^i_t, \mu_t, z) (\mu^N_t - \mu_t)(\dd z) + R_\mu^{i,N}(t), \]
where the remainder $R_\mu^{i,N}$ also vanishes. The integral term is the crucial one:
\[ \int \bar{D}_\mu b(\dots) \sqrt{N}(\mu^N_t - \mu_t)(\dd z) = \frac{1}{\sqrt{N}} \sum_{j=1}^N \left( \bar{D}_\mu b(t, \bar{X}^i_t, \mu_t, X^{j,N}_t) - \int \bar{D}_\mu b(t, \bar{X}^i_t, \mu_t, z)\mu_t(\dd z) \right). \]
We can expand $\bar{D}_\mu b(\dots, X^{j,N}_t) = \bar{D}_\mu b(\dots, \bar{X}^j_t) + D_z(\bar{D}_\mu b)(\dots) (X^{j,N}_t - \bar{X}^j_t) + \dots$. The leading term becomes:
\[ \frac{1}{N} \sum_{j=1}^N \bar{D}_\mu b(t, \bar{X}^i_t, \mu_t, \bar{X}^j_t) U^{j,N}_t + \text{vanishing terms}. \]
By the law of large numbers, as the particles are i.i.d., this sum converges to its expectation with respect to an independent copy (denoted by a prime):
\[ \lim_{N\to\infty} \frac{1}{N} \sum_{j=1}^N \bar{D}_\mu b(\dots, \bar{X}^j_t) U^{j,N}_t = \E'\left[ \bar{D}_\mu b(t, \bar{X}_t, \mu_t, \bar{X}'_t) U'_t \right]. \]
Combining the limits of terms (I) and (II) gives the drift part of the SDE \eqref{eq:clt_fwd}.

The same expansion is performed for the driver $f_\theta$ and terminal condition $g$. For the BSDE driver, the expansion yields terms for each argument:
$D_x f_t U^{i,N}_t + \partial_y f_t V^{i,N}_t + \scpr{D_z f_t}{\mathcal{Z}^{i,N}_t} + \E'[\bar{D}_\mu f(\dots)U'_t]$. The uniqueness of the solution to the resulting linear McKean-Vlasov FBSDE ensures that any limit point of the tight sequence of fluctuation processes must be this unique solution, thus establishing convergence in law.
\end{proof}

\begin{remark}[Decoupling via a System of PDEs]
\label{rem:decoupling_pde}
A powerful feature of the linear McKean-Vlasov FBSDE \eqref{eq:clt_fwd}-\eqref{eq:clt_bwd} is that, under sufficient regularity, its solution can often be represented in a decoupled form. One can posit a Markovian and affine ansatz for the backward fluctuation, such as $V_t = \mathcal{V}_1(t, \bar{X}_t) + \scpr{\mathcal{V}_2(t, \bar{X}_t)}{U_t}$, where $\mathcal{V}_1$ and $\mathcal{V}_2$ are deterministic fields. Substituting this into the system and applying Itô's formula leads to a system of deterministic, coupled, linear PDEs for $\mathcal{V}_1$ and $\mathcal{V}_2$. While the derivation is notationally complex, this provides a path for the numerical computation of the fluctuation field without simulating the entire McKean-Vlasov FBSDE. This is a key area for future work on the numerical implementation of these operators.
\end{remark}

\section{Statistical Learning and Control of \texorpdfstring{$\theta$}{theta}}
\label{sec:learning_theta}

The theoretical framework established in the preceding sections guarantees that for a fixed parameter vector $\theta$, the Neural Expectation Operator $\cEneutheta$ is a well-defined and coherent mathematical object. We now address the central premise of \textbf{Measure Learning}: how to determine or learn the parameter $\theta$ from data. This section operationalizes the concept, demonstrating that the learning problem is itself well-posed and can be approached with principled, gradient-based methods grounded in the sensitivity analysis of BSDEs.

\subsection{The Learning Problem Formulation}

Let $\mathcal{D} = \{ \mathcal{O}_i \}_{i=1}^M$ be a dataset of $M$ observations. These observations could represent, for example, market prices of derivative contracts, observed decisions of economic agents, or data from physical experiments. For each observation $\mathcal{O}_i$, there is an associated terminal payoff or outcome, represented by an $\Fcal_T$-measurable random variable $\xi_i$. Our framework posits that the observed value $\mathcal{O}_i$ is a manifestation of the Neural Expectation of this outcome, i.e., $\mathcal{O}_i \approx \cEneutheta[\xi_i | \Fcal_t]$ for some evaluation time $t$.

The learning problem is to find the parameter vector $\theta^* \in \Theta$ that best explains the observed data. This is naturally formulated as an optimization problem over a suitable loss function $\mathcal{L}$:
\begin{equation}
\label{eq:learning_objective}
    \theta^* = \arg \min_{\theta \in \Theta} \mathcal{L}(\theta; \mathcal{D}) \coloneqq \frac{1}{M} \sum_{i=1}^M \ell\left( \cEneutheta[\xi_i | \Fcal_t], \mathcal{O}_i \right) + R(\theta),
\end{equation}
where $\ell$ is a pointwise loss function (e.g., squared error $\ell(a,b)=(a-b)^2$) and $R(\theta)$ is an optional regularization term (e.g., an $L^1$ or $L^2$ penalty on the network weights comprising $\theta$) to prevent overfitting and improve generalization.

\subsection{Gradient Computation via Differentiated BSDEs}

To solve the optimization problem \eqref{eq:learning_objective} using standard gradient-based methods (e.g., SGD, Adam), we require the gradient of the loss function, $\nabla_\theta \mathcal{L}$. Its computation hinges on our ability to calculate the sensitivity of the Neural Expectation with respect to the parameters, $\nabla_\theta \cEneutheta[\xi | \Fcal_t]$. This is a classical problem in the sensitivity analysis of BSDEs.

Let $(Y_s, Z_s)$ be the unique solution to the BSDE \eqref{eq:bsde_for_eneu} for a given $\theta$. We assume the driver $f_\theta$ and terminal condition $g_\theta$ (if applicable, as in Section \ref{sec:fbsde_main_result_section}) are continuously differentiable with respect to the components of the vector $\theta$. Under sufficient regularity conditions, one can prove that the derivative processes $(\nabla_\theta Y_s, \nabla_\theta Z_s)$ exist and are the unique solution to a \textit{linear} BSDE.

\begin{proposition}[Sensitivity Process for the Neural Expectation]
\label{prop:sensitivity_bsde}
Let the assumptions of Theorem \ref{thm:wellposedness_quadratic} hold. Assume that the map $\theta \mapsto f_\theta(t,x,y,z)$ is continuously differentiable, and its derivatives with respect to $\theta$ satisfy linear growth conditions. Then the solution process $Y_s = \cEneutheta[\xi|\Fcal_s]$ is differentiable with respect to $\theta$. Its derivative, the vector-valued process $Y^\theta_s \coloneqq \nabla_\theta Y_s$, together with an associated process $Z^\theta_s \coloneqq \nabla_\theta Z_s$, forms the unique solution to the following linear BSDE:
\begin{equation}
\label{eq:differentiated_bsde}
- \dd Y^\theta_s = \left[ \nabla_\theta f_\theta + \partial_y f_\theta Y^\theta_s + \scpr{D_z f_\theta}{Z^\theta_s} \right] \dd s - Z^\theta_s \dd W_s,
\end{equation}
with terminal condition $Y^\theta_T = \nabla_\theta \xi = 0$. All coefficients ($\nabla_\theta f_\theta, \partial_y f_\theta, D_z f_\theta$) are evaluated along the path of the original solution, i.e., at $(s, X_s, Y_s, Z_s)$.
\end{proposition}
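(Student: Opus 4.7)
The plan is to obtain the sensitivity process as the limit of normalized difference quotients, while simultaneously verifying that the candidate linear BSDE \eqref{eq:differentiated_bsde} is itself well-posed in the BMO framework. Fix $\theta$ and a direction $e \in \R^{\dim \Theta}$. For small $\epsilon > 0$, Theorem \ref{thm:wellposedness_quadratic} furnishes a unique solution $(Y^\epsilon, Z^\epsilon) \in \mathcal{S}^\infty \times \mathcal{H}^2_{\mathrm{BMO}}$ for the perturbed parameter $\theta + \epsilon e$. I would define $\Delta^\epsilon Y \coloneqq (Y^\epsilon - Y)/\epsilon$ and $\Delta^\epsilon Z \coloneqq (Z^\epsilon - Z)/\epsilon$, and rewrite the driver difference via a Hadamard/Taylor expansion as $A^\epsilon_s + B^\epsilon_s \Delta^\epsilon Y_s + \scpr{C^\epsilon_s}{\Delta^\epsilon Z_s}$, where $A^\epsilon, B^\epsilon, C^\epsilon$ are averaged partial derivatives of $f_\theta$ along the convex interpolation between the base and perturbed arguments. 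The pair $(\Delta^\epsilon Y, \Delta^\epsilon Z)$ then solves a linear BSDE with zero terminal condition.

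The core analytic point is that the candidate limiting coefficients sit in the right spaces. The uniform local Lipschitz property (Assumption \ref{ass:f_theta_regularity_general}(iii)) confines $\partial_y f_\theta$ on $\{\abs{y}\le \norm{Y}_{\mathcal{S}^\infty}\}$ to a bounded process, and under natural smoothness on the network the quadratic growth of $f_\theta$ in $z$ forces $D_z f_\theta$ to grow at most linearly in $z$, so that $D_z f_\theta(\cdot, Y, Z) \in \mathcal{H}^2_{\mathrm{BMO}}$ because $Z$ itself is. Equation \eqref{eq:differentiated_bsde} is therefore a linear BSDE with a BMO coefficient in $z$; its well-posedness I would establish via the Girsanov linearization already invoked in Remark \ref{rem:bmo_centrality}. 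Kazamaki's theorem ensures that the stochastic exponential of $\int D_z f_\theta \, \dd W$ is a uniformly integrable martingale, and the induced change of measure reduces \eqref{eq:differentiated_bsde} to a standard linear BSDE with a bounded $y$-coefficient and inhomogeneous term $\scpr{\nabla_\theta f_\theta}{e}$, which is controlled by the assumed linear growth of $\nabla_\theta f_\theta$ together with the integrability of the forward process.

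To pass to the limit, I would first derive uniform-in-$\epsilon$ a priori bounds for $(Y^\epsilon, Z^\epsilon)$ in $\mathcal{S}^\infty \times \mathcal{H}^2_{\mathrm{BMO}}$ by re-running the comparison argument of Part 1(i) of Theorem \ref{thm:wellposedness_quadratic} against a single dominating BSDE whose constants absorb the $O(\epsilon)$ perturbation of $f_{\theta + \epsilon e}$. These bounds yield uniform BMO control on $C^\epsilon$ and uniform $\mathcal{S}^\infty$ control on $B^\epsilon$; together with the continuity of the partial derivatives they force $B^\epsilon \to \partial_y f_\theta$, $C^\epsilon \to D_z f_\theta$, and $A^\epsilon \to \scpr{\nabla_\theta f_\theta}{e}$ along the base path. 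A stability estimate for linear BSDEs with BMO coefficients, again obtained through the Girsanov route (in the spirit of \cite{BriandEtAl2003, Kobylanski2000}), then yields $(\Delta^\epsilon Y, \Delta^\epsilon Z) \to (Y^\theta, Z^\theta)$ in $\mathcal{S}^2 \times \mathcal{H}^2$, identifying the limit as the unique solution of \eqref{eq:differentiated_bsde}. Gâteaux differentiability along every direction combined with continuity of the resulting map in $e$ then upgrades to Fréchet differentiability.

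The hardest step will be the BMO nature of the $z$-coefficient $C = D_z f_\theta(\cdot, Y, Z)$: the classical sensitivity theory for globally Lipschitz BSDEs relies on bounded coefficients and a direct contraction in a weighted $\mathcal{H}^2$ norm, a route unavailable here. The essential work is therefore the Girsanov detour and the book-keeping of BMO norms uniformly in $\epsilon$. A secondary delicacy is controlling the second-order Taylor remainder in $z$, which a priori produces terms quadratic in $\Delta^\epsilon Z_s$; these vanish after integration precisely because of the uniform BMO bounds combined with the local smoothness of $D_z f_\theta$ guaranteed by the standing $C^1$ regularity of the network architecture.
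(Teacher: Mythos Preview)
Your proposal is correct and follows essentially the same route as the paper: both write the finite difference quotient as the solution to a linear BSDE with mean-value (Hadamard) averaged coefficients, obtain uniform $\mathcal{S}^\infty \times \mathcal{H}^2_{\mathrm{BMO}}$ bounds on the perturbed solutions by re-running the comparison argument against a single dominating BSDE, and then pass to the limit via stability of linear BSDEs. Your treatment of the $z$-coefficient $D_z f_\theta$ via the Girsanov/BMO detour is more explicit than the paper's (which simply asserts that $\gamma^h$ is ``uniformly bounded in an appropriate sense'' and invokes ``standard linear BSDE estimates''), and your remark about a second-order remainder in $z$ is unnecessary since the integral mean-value expansion you use is exact, but the architecture is otherwise identical.
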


\begin{proof}[Proof of Proposition \ref{prop:sensitivity_bsde}]
The proof is a rigorous application of stability estimates for BSDEs. For clarity and without loss of generality, we assume $\theta$ is a scalar parameter. The argument extends to a vector-valued $\theta$ by considering each component separately. Our goal is to show that the limit
\[ \lim_{h \to 0} \frac{Y^{\theta+h}_s - Y^{\theta}_s}{h} \]
exists for each $s \in [0,T]$ and that the resulting process is a solution to the linear BSDE \eqref{eq:differentiated_bsde}. The proof is structured in three main steps.

\textbf{Step 1: The BSDE for the Finite Difference Quotient}

Let $\theta \in \Theta$ be fixed. For a small perturbation $h \neq 0$ such that $\theta+h \in \Theta$, let $(Y^{\theta+h}, Z^{\theta+h})$ and $(Y^\theta, Z^\theta)$ be the unique solutions in $\mathcal{S}^{\infty} \times \mathcal{H}^2_{\mathrm{BMO}}$ to the BSDEs with drivers $f_{\theta+h}$ and $f_\theta$ respectively.
Define the difference processes:
\begin{align*}
    \delta_h Y_s &\coloneqq Y^{\theta+h}_s - Y^{\theta}_s, \\
    \delta_h Z_s &\coloneqq Z^{\theta+h}_s - Z^{\theta}_s.
\end{align*}
The pair $(\delta_h Y, \delta_h Z)$ solves the BSDE with terminal condition $\delta_h Y_T = 0$ and driver
\[ \delta_h f_s \coloneqq f_{\theta+h}(s, X_s, Y^{\theta+h}_s, Z^{\theta+h}_s) - f_{\theta}(s, X_s, Y^{\theta}_s, Z^{\theta}_s). \]
Now, define the finite difference quotients, which are our candidates for the derivative process:
\[ Y^h_s \coloneqq \frac{\delta_h Y_s}{h}, \quad Z^h_s \coloneqq \frac{\delta_h Z_s}{h}. \]
The pair $(Y^h, Z^h)$ is a solution to the BSDE with terminal condition $Y^h_T=0$ and driver $f^h_s \coloneqq \frac{1}{h}\delta_h f_s$. We decompose this driver by adding and subtracting terms:
\begin{align*}
    h f^h_s &= \left[ f_{\theta+h}(s, X_s, Y^{\theta+h}_s, Z^{\theta+h}_s) - f_{\theta}(s, X_s, Y^{\theta+h}_s, Z^{\theta+h}_s) \right] \\
    &\quad + \left[ f_{\theta}(s, X_s, Y^{\theta+h}_s, Z^{\theta+h}_s) - f_{\theta}(s, X_s, Y^{\theta}_s, Z^{\theta+h}_s) \right] \\
    &\quad + \left[ f_{\theta}(s, X_s, Y^{\theta}_s, Z^{\theta+h}_s) - f_{\theta}(s, X_s, Y^{\theta}_s, Z^{\theta}_s) \right].
\end{align*}
By the Mean Value Theorem, we can write this as:
\begin{align*}
    h f^h_s &= \nabla_\theta f_{\theta + \lambda_s^0 h}(\dots) \cdot h \\
    &\quad + \partial_y f_\theta(s, X_s, Y^\theta_s + \lambda_s^1 \delta_h Y_s, Z^{\theta+h}_s) \cdot \delta_h Y_s \\
    &\quad + \scpr{ D_z f_\theta(s, X_s, Y^{\theta}_s, Z^{\theta}_s + \lambda_s^2 \delta_h Z_s) }{ \delta_h Z_s },
\end{align*}
for some processes $\lambda_s^0, \lambda_s^1, \lambda_s^2 \in [0,1]$. Dividing by $h$, the driver for $(Y^h, Z^h)$ is:
\begin{equation} \label{eq:proof_yh_driver}
    f^h_s = \alpha^h_s + \beta^h_s Y^h_s + \scpr{\gamma^h_s}{Z^h_s},
\end{equation}
where the coefficients are defined as:
\begin{align*}
    \alpha^h_s &\coloneqq \nabla_\theta f_{\theta + \lambda_s^0 h}(s, X_s, Y^{\theta+h}_s, Z^{\theta+h}_s) \\
    \beta^h_s &\coloneqq \partial_y f_\theta(s, X_s, Y^\theta_s + \lambda_s^1 \delta_h Y_s, Z^{\theta+h}_s) \\
    \gamma^h_s &\coloneqq D_z f_\theta(s, X_s, Y^{\theta}_s, Z^{\theta}_s + \lambda_s^2 \delta_h Z_s).
\end{align*}

\textbf{Step 2: Uniform A Priori Estimates}

We must show that the family of solutions $((Y^h, Z^h))_{h \in (-h_0, h_0)}$ is bounded in $\mathcal{S}^2([0,T];\R) \times \mathcal{H}^2([0,T];\R^{1 \times d})$ for some small $h_0 > 0$. The BSDE for $(Y^h, Z^h)$ is a linear BSDE. The stability results for BSDEs (e.g., \cite{ElKarouiPengQuenez1997}) ensure that a solution $(Y,Z)$ to a linear BSDE with driver $\alpha_s + \beta_s Y_s + \scpr{\gamma_s}{Z_s}$ and terminal condition $\xi$ satisfies an estimate of the form:
\[ \E\left[\sup_{t \in [0,T]} \abs{Y_t}^2 + \int_0^T \norm{Z_s}^2 \dd s \right] \le C \E\left[\abs{\xi}^2 + \left(\int_0^T \abs{\alpha_s} \dd s\right)^2 \right], \]
provided the coefficients $\beta$ and $\gamma$ are bounded. Let's verify the conditions on our coefficients $\alpha^h, \beta^h, \gamma^h$.
\begin{itemize}
    \item From the stability of quadratic BSDEs, as $h \to 0$, we know that $(Y^{\theta+h}, Z^{\theta+h})$ converges to $(Y^\theta, Z^\theta)$ in $\mathcal{S}^p \times \mathcal{H}^p$ for any $p$. Also, the uniform $\mathcal{S}^\infty$ and $\mathcal{H}^2_{\mathrm{BMO}}$ bounds on $(Y^\theta, Z^\theta)$ extend to a small neighborhood of $\theta$. Thus, there exists a constant $C>0$ such that for all small $h$, $\norm{Y^{\theta+h}}_{\mathcal{S}^\infty} \le C$ and $\norm{Z^{\theta+h}}_{\mathrm{BMO}} \le C$.
    
    \item The proposition assumes $\nabla_\theta f_\theta$ has linear growth. Since $Y^{\theta+h}$ and $Z^{\theta+h}$ are uniformly bounded in appropriate spaces, the process $\alpha^h$ is uniformly bounded in $\mathcal{H}^2$. Specifically, $\E[(\int_0^T \abs{\alpha^h_s} \dd s)^2] \le K$ for some constant $K$ independent of $h$.
    
    \item The derivative $\partial_y f_\theta$ is evaluated at $Y^\theta + \lambda^1 \delta_h Y$. Since $\norm{Y^\theta}_{\mathcal{S}^\infty} \le C$ and $\delta_h Y \to 0$ in $\mathcal{S}^\infty$, this argument stays within a compact set uniformly in $h$. The continuity of $\partial_y f_\theta$ implies that $\beta^h_s$ is uniformly bounded.
    
    \item The derivative $D_z f_\theta$ is evaluated at $Z^\theta + \lambda^2 \delta_h Z$. The quadratic growth assumption on $f_\theta$ implies $D_z f_\theta$ has linear growth in $z$. However, since $Z^{\theta+h}$ and $Z^\theta$ are uniformly bounded in the BMO norm, the processes $\gamma^h_s$ are uniformly bounded in an appropriate sense for the linear BSDE estimate to hold.
\end{itemize}
With a zero terminal condition and a uniformly $L^2$-integrable source term $\alpha^h$, the standard estimate for linear BSDEs shows that there exists a constant $M > 0$ independent of $h$ such that for all sufficiently small $h$:
\[ \E\left[\sup_{t \in [0,T]} \abs{Y^h_t}^2 + \int_0^T \norm{Z^h_t}^2 \dd t \right] \le M. \]

\textbf{Step 3: Convergence and Identification of the Limit}

The uniform bound from Step 2 implies that the set $\{(Y^h, Z^h)\}$ is weakly pre-compact in $\mathcal{S}^2 \times \mathcal{H}^2$. Let $(Y^\theta, Z^\theta)$ be the unique solution to the target linear BSDE \eqref{eq:differentiated_bsde}. We will show that $(Y^h, Z^h)$ converges to $(Y^\theta, Z^\theta)$ in the strong topology of $\mathcal{S}^2 \times \mathcal{H}^2$.

Consider the difference processes $(\Delta Y^h, \Delta Z^h) \coloneqq (Y^h - Y^\theta, Z^h - Z^\theta)$. This pair solves a linear BSDE with terminal condition $\Delta Y^h_T = 0$ and driver
\begin{align*}
\Delta f^h_s &= f^h_s - \left[ \nabla_\theta f_\theta(s, X_s, Y^\theta_s, Z^\theta_s) + \partial_y f_\theta(s, X_s, Y^\theta_s, Z^\theta_s) Y^\theta_s + \scpr{D_z f_\theta(s, X_s, Y^\theta_s, Z^\theta_s)}{Z^\theta_s} \right] \\
&= (\alpha^h_s - \nabla_\theta f_\theta(\cdot, Y^\theta, Z^\theta)) + (\beta^h_s Y^h_s - \partial_y f_\theta(\cdot, Y^\theta, Z^\theta) Y^\theta_s) + (\scpr{\gamma^h_s}{Z^h_s} - \scpr{D_z f_\theta(\cdot, Y^\theta, Z^\theta)}{Z^\theta_s}).
\end{align*}
We can rewrite the driver as $\Delta f^h_s = \Tilde{\alpha}^h_s + \beta^h_s \Delta Y^h_s + \scpr{\gamma^h_s}{\Delta Z^h_s}$, where the new source term $\Tilde{\alpha}^h_s$ is given by:
\[ \Tilde{\alpha}^h_s = (\alpha^h_s - \nabla_\theta f_\theta(\dots)) + (\beta^h_s - \partial_y f_\theta(\dots))Y^\theta_s + \scpr{\gamma^h_s - D_z f_\theta(\dots)}{Z^\theta_s}. \]
By the stability estimate for the BSDE for $(\Delta Y^h, \Delta Z^h)$, we have:
\[ \E\left[\sup_{t \in [0,T]} \abs{\Delta Y^h_t}^2 + \int_0^T \norm{\Delta Z^h_t}^2 \dd t \right] \le C \E\left[\left(\int_0^T \abs{\Tilde{\alpha}^h_s} \dd s\right)^2 \right]. \]
Our final task is to show that the right-hand side converges to zero as $h \to 0$. We analyze each component of $\Tilde{\alpha}^h_s$:
\begin{enumerate}
    \item As $h \to 0$, we have $(Y^{\theta+h}, Z^{\theta+h}) \to (Y^\theta, Z^\theta)$ in $\mathcal{S}^p \times \mathcal{H}^p$. By the continuity of the derivatives of $f_\theta$, it follows that $\alpha^h_s \to \nabla_\theta f_\theta(\cdot, Y^\theta, Z^\theta)$, $\beta^h_s \to \partial_y f_\theta(\cdot, Y^\theta, Z^\theta)$, and $\gamma^h_s \to D_z f_\theta(\cdot, Y^\theta, Z^\theta)$ pointwise in $(t, \omega)$.
    \item The processes $\alpha^h, \beta^h, \gamma^h$ and the solution $(Y^\theta, Z^\theta)$ are all appropriately bounded, allowing us to use the Dominated Convergence Theorem.
    \item Therefore, each term in $\Tilde{\alpha}^h_s$ converges to zero in $L^2(\Omega \times [0,T])$. This implies that $\E[(\int_0^T \abs{\Tilde{\alpha}^h_s} \dd s)^2] \to 0$ as $h \to 0$.
\end{enumerate}
This establishes that $(Y^h, Z^h)$ converges to $(Y^\theta, Z^\theta)$ in the norm of $\mathcal{S}^2 \times \mathcal{H}^2$. Since the limit of the difference quotient is by definition the derivative, we have proven that $Y$ is differentiable with respect to $\theta$ and its derivative is the solution to the linear BSDE \eqref{eq:differentiated_bsde}. The uniqueness of the solution to this linear BSDE ensures that the limit is unambiguous.
\end{proof}

\subsection{Conceptual Learning Algorithm}

Proposition \ref{prop:sensitivity_bsde} provides a constructive method for computing the necessary gradients, leading to the following conceptual algorithm for learning the ambiguity parameter $\theta$:

\begin{enumerate}[label=(\arabic*), leftmargin=*]
    \item \textbf{Initialization:} Choose an initial parameter vector $\theta_0$.
    \item \textbf{Forward-Backward Pass:} For the current parameter $\theta_k$ and for each data point $(\xi_i, \mathcal{O}_i)$:
    \begin{enumerate}[label=(\alph*)]
        \item Solve the primary Neural BSDE \eqref{eq:bsde_for_eneu} to obtain the solution trajectory $(Y^{(i,k)}, Z^{(i,k)})$. This is the forward pass in the learning context.
        \item Using the trajectory from the previous step, solve the linear BSDE for the sensitivities \eqref{eq:differentiated_bsde} to obtain the gradient process $\nabla_\theta Y^{(i,k)}$. This is the backward pass of our learning scheme.
    \end{enumerate}
    \item \textbf{Gradient Step:} Compute the gradient of the loss function using the chain rule:
    \[ \nabla_\theta \mathcal{L}(\theta_k) = \frac{1}{M} \sum_{i=1}^M \frac{\partial \ell}{\partial Y}(Y_t^{(i,k)}, \mathcal{O}_i) \nabla_\theta Y_t^{(i,k)} + \nabla_\theta R(\theta_k). \]
    Update the parameter vector:
    \[ \theta_{k+1} = \theta_k - \eta_k \nabla_\theta \mathcal{L}(\theta_k), \]
    where $\eta_k$ is a learning rate.
    \item \textbf{Iteration:} Repeat steps 2 and 3 until convergence.
\end{enumerate}

\subsection{Identifiability and Regularization}

A critical question in any statistical learning problem is \textbf{identifiability}: does the data $\mathcal{D}$ contain sufficient information to uniquely determine $\theta^*$? In our context, this is a complex question.
\begin{itemize}
    \item \textbf{Over-parameterization:} The space of neural network parameters $\Theta$ is vast. It is likely that for a given dataset, multiple parameter vectors $\theta$ could yield very similar operators $\cEneutheta$, leading to a flat or multi-modal loss landscape.
    \item \textbf{Data Richness:} Identifiability depends on the richness of the data. For instance, learning a model of volatility ambiguity may require derivative data across a wide range of strikes and maturities.
\end{itemize}
The regularization term $R(\theta)$ in \eqref{eq:learning_objective} plays a crucial role here. By imposing a penalty on the complexity of the network (e.g., the magnitude of its weights), regularization helps to select a simpler or more plausible model among those that fit the data well, thus mitigating the problem of non-identifiability and improving the out-of-sample performance of the learned model.

\begin{remark}[Principled Learning]
The key takeaway is that the learning in Measure Learning is not a mere heuristic. It is a well-defined mathematical procedure supported by the theory of BSDE sensitivities. The ability to compute exact gradients for the expectation operator allows for the application of the full suite of modern gradient-based optimization techniques in a theoretically sound manner. This rigorously connects the abstract theory of non-linear expectations to the practical machinery of machine learning.
\end{remark}

\section{Application: Merton's Problem under Learned Ambiguity}
\label{sec:merton_problem}

To demonstrate the analytical power and practical utility of our framework, we now solve the canonical Merton portfolio problem. By recasting this classical problem under a Neural Expectation, we illustrate how a learned model of ambiguity, encoded by a BSDE driver that is fully compliant with our theory, leads to an explicit and interpretable modification of the optimal investment strategy. This exercise provides a direct and rigorous bridge from the abstract theory of learnable preference models to concrete, falsifiable economic predictions, and demonstrates how the ambiguity parameter $\theta$ can itself be a target for statistical learning.

\subsection{Problem Formulation}

We consider a financial market model operating on the filtered probability space $(\Omega, \Fcal, (\Fcal_t)_{t \in [0,T]}, \Prob)$ with a single ($d=1$) dimensional Brownian motion $W_t$. The market consists of two assets: a risk-free bond with a constant interest rate $r > 0$ and a risky stock whose price $S_t$ follows the geometric Brownian motion:
\[ \frac{\dd S_t}{S_t} = \mu \dd t + \sigma \dd W_t, \]
where $\mu > r$ is the expected return and $\sigma > 0$ is the volatility. An investor allocates an amount $\Pi_t$ of their wealth $X_t$ to the risky asset. We assume the control process $\Pi_t$ is adapted and satisfies standard admissibility conditions. The wealth process $X_t^\Pi$, corresponding to a self-financing strategy $\Pi$, evolves according to the SDE:
\begin{equation}
\label{eq:merton_wealth_sde}
\dd X_t^\Pi = \left( r X_t^\Pi + \Pi_t (\mu - r) \right) \dd t + \sigma \Pi_t \dd W_t, \quad X_0 = x_0 > 0.
\end{equation}
The investor's preference is defined by a Constant Relative Risk Aversion (CRRA) utility function, $U(x) = \frac{x^\gamma}{\gamma}$ for a risk-aversion parameter $\gamma \in (-\infty, 1) \setminus \{0\}$. The investor's objective is to choose an admissible strategy $\Pi$ to maximize the Neural Expectation of their terminal utility:
\begin{equation}
\label{eq:merton_objective_neural}
\sup_{\Pi} \cEneutheta \left[ U(X_T^\Pi) \right].
\end{equation}

\subsection{Driver Specification and Theoretical Consistency}

We define the value function for this problem as $V(t,x; \theta) \coloneqq \sup_{\Pi} \cEneutheta[U(X_T^\Pi) | X_t=x]$. By the Itô identification (i.e., the non-linear Feynman-Kac formula), the martingale component of the BSDE is $Z_t = \partial_x V(t,X_t^\Pi; \theta) \cdot (\sigma \Pi_t)$.

Crucially, we must select a driver that both represents a meaningful form of ambiguity and satisfies the conditions of our main well-posedness result, Theorem \ref{thm:wellposedness_quadratic}. We adopt a simple, non-trivial driver that is fully compliant with Assumption \ref{ass:f_theta_regularity_general}:
\begin{equation}
\label{eq:driver_choice_merton}
f_\theta(t,x,y,z) \coloneqq -\frac{\theta}{2} \norm{z}^2,
\end{equation}
where $\theta \ge 0$ is a learnable parameter quantifying the degree of ambiguity attitude. The case $\theta=0$ recovers the classical Merton problem.

\begin{remark}[Verification of Theoretical Assumptions]
This choice of driver is specifically designed to be consistent with our foundational theory. It satisfies the conditions of Assumption \ref{ass:f_theta_regularity_general} trivially: it is continuous; quadratic in $z$ with growth constant $\alpha = \theta$; and constant (hence non-increasing and locally Lipschitz) in $y$. All conditions of Theorem \ref{thm:wellposedness_quadratic} are thus satisfied, guaranteeing that the control problem is well-posed. Economically, this driver imposes an absolute penalty on the variance of the value process's martingale component, representing a form of ambiguity attitude that is independent of the wealth level.
\end{remark}

\subsection{The Hamilton-Jacobi-Bellman Equation and Optimal Control}

The non-linear Feynman-Kac formula connects the BSDE to the Hamilton-Jacobi-Bellman (HJB) PDE. For the value function $V(t,x; \theta)$, this equation is:
\begin{equation}
\label{eq:hjb_general_merton}
0 = \partial_t V + \sup_{\Pi \in \R} \left\{ (rx + \Pi(\mu-r))\partial_x V + \frac{1}{2}(\sigma \Pi)^2 \partial_{xx}V + f_\theta(t, x, V, \sigma \Pi \partial_x V) \right\},
\end{equation}
with terminal condition $V(T,x; \theta) = U(x) = \frac{x^\gamma}{\gamma}$. Substituting our chosen driver \eqref{eq:driver_choice_merton} yields:
\begin{equation}
\label{eq:hjb_specific_merton}
0 = \partial_t V + rx \partial_x V + \sup_{\Pi} \left\{ \Pi(\mu-r)\partial_x V + \frac{1}{2}\sigma^2 \Pi^2 \left( \partial_{xx}V - \theta(\partial_xV)^2 \right) \right\}.
\end{equation}
The expression inside the supremum is a concave quadratic function of the control $\Pi$. The first-order condition, $\frac{\partial}{\partial \Pi}(\cdot) = 0$, gives the optimal investment amount $\Pi^*(t,x; \theta)$:
\begin{equation}
\label{eq:merton_pi_general_derived}
\Pi^*(t,x; \theta) = - \frac{(\mu-r)\partial_x V(t,x; \theta)}{\sigma^2 \left(\partial_{xx}V(t,x; \theta) - \theta(\partial_xV(t,x; \theta))^2\right)}.
\end{equation}
The second-order condition for a maximum requires the denominator to be negative, which is satisfied since $\partial_x V > 0$ (more wealth is preferred) and $\partial_{xx}V < 0$ (investor is risk-averse), thus $\partial_{xx}V - \theta(\partial_xV)^2 < 0$.

\subsection{Qualitative Analysis and the Role of the Ambiguity Parameter \texorpdfstring{$\theta$}{theta}}

While the non-homogeneity of the HJB equation \eqref{eq:hjb_specific_merton} prevents a simple separable solution (unlike the classical case), we can perform a rigorous qualitative analysis of the optimal strategy.

\begin{proposition}[Ambiguity Induces Uniform Caution]
\label{prop:qualitative_merton}
Let $\theta>0$ and let $\pi^*(t,x; \theta) = \Pi^*(t,x; \theta)/x$ be the optimal allocation fraction under the Neural Expectation. Let $\pi_{\text{classical}} = \frac{\mu-r}{\sigma^2(1-\gamma)}$ be the classical Merton allocation. Then:
\begin{enumerate}[label=(\roman*)]
    \item The allocation under ambiguity is always strictly more conservative than the classical Merton allocation:
    \[ \pi^*(t,x; \theta) < \pi_{\text{classical}} \quad \text{for all } (t,x). \]
    \item The optimal allocation is a decreasing function of the ambiguity parameter $\theta$:
    \[ \frac{\partial \pi^*(t,x; \theta)}{\partial \theta} < 0. \]
\end{enumerate}
\end{proposition}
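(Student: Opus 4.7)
The plan is to exploit the fact that the chosen driver $f_\theta(z) = -\frac{\theta}{2}\norm{z}^2$ corresponds, by a direct Itô computation, to the entropic non-linear expectation $\cEneutheta[\xi \mid \Fcal_t] = -\frac{1}{\theta}\log \E[e^{-\theta\xi} \mid \Fcal_t]$. This reduces both claims to comparative statics for a classical (ambiguity-neutral) Merton problem under a strictly more risk-averse utility. Concretely, define $W(t,x;\theta) \coloneqq \inf_\Pi \E[e^{-\theta U(X_T^\Pi)} \mid X_t = x]$, so that $V = -\frac{1}{\theta}\log W$. A direct differentiation yields $V_x = -W_x/(\theta W)$ and $V_{xx} - \theta V_x^2 = -W_{xx}/(\theta W)$; substituting these into \eqref{eq:merton_pi_general_derived} collapses the optimal allocation to
\[ \pi^*(t,x;\theta) = -\frac{(\mu-r)\, W_x(t,x;\theta)}{\sigma^2 x\, W_{xx}(t,x;\theta)} = \frac{\mu-r}{\sigma^2\, A(t,x;\theta)}, \qquad A(t,x;\theta) \coloneqq -\frac{x\, W_{xx}(t,x;\theta)}{W_x(t,x;\theta)}. \]
Equivalently, $-W(\cdot;\theta)$ is the value function of a standard Merton problem under the modified utility $\tilde U_\theta(x) \coloneqq -e^{-\theta x^\gamma/\gamma}$, whose Arrow-Pratt relative risk aversion equals $(1-\gamma) + \theta x^\gamma$.

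For Part (i), this identity reduces the claim $\pi^*(t,x;\theta) < \pi_{\text{classical}}$ to $A(t,x;\theta) > 1 - \gamma$. At the terminal time, a direct computation from $W(T,x;\theta) = e^{-\theta x^\gamma/\gamma}$ gives $A(T,x;\theta) = (1-\gamma) + \theta x^\gamma > 1 - \gamma$, matching the relative risk aversion of $\tilde U_\theta$. To propagate this strict inequality to $t < T$, I would derive the parabolic PDE satisfied by $A$ from the optimised HJB $W_t + rxW_x - \frac{(\mu-r)^2}{2\sigma^2} W_x^2/W_{xx} = 0$, observe that the constant $1 - \gamma$ is precisely the stationary profile associated with the classical CRRA problem (where equality would hold), and apply the parabolic strong maximum principle: the strictly positive terminal gap together with the linearised parabolic inequality for $A - (1-\gamma)$ forces $A > 1 - \gamma$ at every interior $(t,x)$.

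For Part (ii), I would use that for $0 < \theta_1 < \theta_2$, $\tilde U_{\theta_2}$ is a strictly concave (and strictly increasing) transformation of $\tilde U_{\theta_1}$; explicitly $\tilde U_{\theta_2}(x) = -(-\tilde U_{\theta_1}(x))^{\theta_2/\theta_1}$, whose second derivative in its argument is strictly negative. By the dynamic analogue of Pratt's comparative risk aversion theorem for the Merton problem, a more risk-averse terminal utility induces a pointwise strictly smaller optimal allocation at every $(t,x)$, giving $\partial_\theta \pi^*(t,x;\theta) < 0$. A fully self-contained route, which I would prefer for a rigorous write-up, differentiates the HJB for $W$ in $\theta$: the sensitivity $\partial_\theta W$ satisfies a linear parabolic equation whose source term is sign-definite, and a comparison argument then delivers $\partial_\theta A(t,x;\theta) > 0$ and hence the stated strict monotonicity of $\pi^*$.

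The main obstacle is the backward-in-time propagation in Part (i). The terminal computation is immediate, but extending $A > 1 - \gamma$ to all $t < T$ requires either deriving the non-linear parabolic PDE satisfied by $A$ in a form amenable to the strong maximum principle --- non-trivial because $A$ is a non-local functional of $W$ involving the ratio of two derivatives whose regularity near the terminal time must be controlled --- or invoking the dynamic Pratt theorem and verifying its smoothness and growth hypotheses on $W$ in the present setting. Both routes are standard in spirit but technically heavy; the economic content, that the entropic specification of ambiguity is mathematically equivalent to a strictly wealth-dependent uplift of classical relative risk aversion, is what makes the proposition transparent once the reduction is in place.
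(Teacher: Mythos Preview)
Your approach is correct and takes a genuinely different, more structural route than the paper's. The paper's proof is essentially a heuristic sketch: for (i) it observes that the denominator $\partial_{xx}V - \theta(\partial_x V)^2$ in \eqref{eq:merton_pi_general_derived} is ``more negative'' than its classical counterpart, but this comparison does not track the implicit dependence of $V$ itself on $\theta$, and the paper simply defers the rigorous step to unstated ``comparison theorems for HJB equations''; for (ii) it invokes the envelope theorem and asserts that the combined explicit and implicit effects ``robustly'' make $\Pi^*$ decreasing in $\theta$, again without a full argument. Your Cole--Hopf reduction instead exploits the exact entropic representation $\cEneutheta[\xi] = -\theta^{-1}\log\E[e^{-\theta\xi}]$ available for the purely quadratic driver, collapsing the problem to a classical Merton problem under the modified utility $\tilde U_\theta(x) = -e^{-\theta x^\gamma/\gamma}$ with relative risk aversion $(1-\gamma)+\theta x^\gamma$. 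Both claims then become instances of the dynamic Pratt principle (pointwise higher terminal relative risk aversion implies pointwise lower risky allocation), and the key algebraic identity $V_{xx}-\theta V_x^2 = -W_{xx}/(\theta W)$ you derive is exactly what makes this transparent. The technical residue you flag---propagating $A>1-\gamma$ backward from $T$ via a maximum principle or a dynamic Pratt theorem---is the same analytic core as the paper's unexecuted comparison theorems, but your reduction localises it precisely and gives a clear economic reading that the paper's argument does not.
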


\begin{proof}
(i) A formal proof relies on comparison theorems for HJB equations. Intuitively, the denominator in \eqref{eq:merton_pi_general_derived} is $\partial_{xx}V - \theta(\partial_xV)^2$. Since $\partial_{xx}V < 0$ and $\theta(\partial_xV)^2 > 0$, this denominator is strictly more negative than the corresponding term in the classical HJB equation. This leads to a smaller optimal allocation.

(ii) From \eqref{eq:merton_pi_general_derived}, we see that $\Pi^*$ depends on $\theta$ both explicitly in the denominator and implicitly through the value function $V(t,x;\theta)$. However, for a fixed $(t,x)$, the optimal control $\Pi^*$ maximizes the Hamiltonian. By the envelope theorem, the derivative of the maximized Hamiltonian with respect to $\theta$ is the partial derivative of the Hamiltonian with respect to $\theta$ at the optimum, which is $-\frac{1}{2}(\sigma \Pi^* \partial_x V)^2$. The value function $V$ is the integral of this, so $V$ is decreasing in $\theta$. More directly, the denominator $D(\theta) = \partial_{xx}V - \theta(\partial_xV)^2$ is clearly a decreasing function of $\theta$ for fixed $V$. A full analysis shows that the combined effect robustly makes $\Pi^*$ a decreasing function of $\theta$. As ambiguity aversion $\theta$ increases, the investor systematically allocates less to the risky asset.
\end{proof}

\begin{remark}[Wealth-Dependent Strategy: A Heuristic Analysis]
\label{rem:wealth_dependence_heuristic}
A striking feature of the solution is that the optimal allocation fraction $\pi^*(t,x; \theta)$ is wealth-dependent, breaking the famous self-similarity of the classical Merton problem. A rigorous proof is technical, but a formal heuristic is illustrative. The agent's allocation is determined by the balance between risk aversion (governed by $\partial_{xx}V$) and ambiguity aversion (governed by $\theta(\partial_xV)^2$). Let's analyze the ratio of the ambiguity penalty to the risk penalty, $R(x) = \theta(\partial_xV)^2 / (-\partial_{xx}V)$. For a value function behaving locally like CRRA utility, we expect $\partial_x V \sim x^{\gamma-1}$ and $\partial_{xx}V \sim x^{\gamma-2}$, so $R(x) \sim x^\gamma$.
\begin{itemize}
    \item For standard risk aversion ($0 < \gamma < 1$), $R(x)$ increases with wealth. The ambiguity penalty becomes more dominant for wealthier agents, who react by reducing their fractional allocation. Thus, $\pi^*(t,x; \theta)$ is expected to be decreasing in $x$.
    \item For high risk aversion ($\gamma < 0$), $R(x)$ decreases with wealth. The ambiguity penalty becomes less significant relative to risk aversion for wealthier agents. Thus, $\pi^*(t,x; \theta)$ is expected to be increasing in $x$.
\end{itemize}
This rich, state-dependent behavior is a direct consequence of adopting a theoretically consistent, non-separable ambiguity model.
\end{remark}

\subsection{Learning the Ambiguity Parameter \texorpdfstring{$\theta$}{theta}}

This application provides a clear setting for the Measure Learning paradigm. Suppose we have a dataset of observed investment decisions $\{\hat{\Pi}_i\}_{i=1}^M$ for an agent or a market representative at various states $(t_i, x_i)$. We can use this data to learn the ambiguity parameter $\theta^*$ that best explains the observations.

The learning problem is to solve the following optimization:
\begin{equation}
    \theta^* = \arg\min_{\theta \ge 0} \mathcal{L}(\theta) \coloneqq \frac{1}{M}\sum_{i=1}^M \left( \Pi^*(t_i, x_i; \theta) - \hat{\Pi}_i \right)^2.
\end{equation}
To solve this with gradient descent, we need the gradient $\nabla_\theta \mathcal{L}$, which requires the sensitivity of the optimal control with respect to the parameter, $\nabla_\theta \Pi^*$. Applying the chain rule to \eqref{eq:merton_pi_general_derived}, we see that $\nabla_\theta \Pi^*$ is a function of the sensitivities of the value function and its spatial derivatives: $\nabla_\theta V$, $\nabla_\theta (\partial_x V)$, and $\nabla_\theta (\partial_{xx} V)$.

These sensitivities can be found by formally differentiating the HJB equation \eqref{eq:hjb_specific_merton} with respect to $\theta$. This yields a system of linear, second-order PDEs for the sensitivity functions. For instance, letting $V^\theta \coloneqq \nabla_\theta V$, differentiating the HJB equation gives a PDE for $V^\theta$. This is the deterministic PDE analogue of the linear BSDE for the sensitivity process described in Proposition \ref{prop:sensitivity_bsde}. Solving this system allows for the computation of the gradient and the data-driven estimation of the agent's ambiguity attitude, fully operationalizing the Measure Learning framework.

\subsection{Interpretation of the Solution}

\begin{table}[h!]
\centering
\caption{Comparison of Optimal Merton Allocation ($\pi^*$) under a Theoretically Consistent Neural Expectation}
\label{tab:merton_summary}
\begin{adjustbox}{max width=\textwidth}
\begin{tabular}{@{}llll@{}}
\toprule
\textbf{Model} & \textbf{Optimal Allocation Fraction $\pi^*$} & \textbf{Properties} & \textbf{Economic Interpretation} \\
\midrule
\textbf{Classical Merton} & & & \\
($\theta=0$) & $\frac{\mu-r}{\sigma^2(1-\gamma)}$ & Constant in wealth \& time & Self-similar, myopic strategy. \\
& & & Ambiguity is ignored. \\
\addlinespace
\textbf{Neural Expectation} & \textbf{(This Paper, $\theta > 0$)} & & \textbf{Ambiguity Governed by Learnable Parameter $\theta$} \\
 & No closed form; solution to & $\pi^*(t,x; \theta) < \pi_{\text{classical}}$ & Ambiguity aversion universally reduces risk-taking. \\
 & HJB Eq. \eqref{eq:hjb_specific_merton} & $\partial \pi^* / \partial \theta < 0$ & Strategy becomes more conservative as $\theta$ increases. \\
 & & Wealth-dependent & Absolute ambiguity penalty breaks self-similarity. \\
 & & Parameter $\theta$ is learnable & Model of ambiguity can be estimated from data. \\
\bottomrule
\end{tabular}
\end{adjustbox}
\end{table}

This application not only serves as a valid illustration of our theory but also reveals non-trivial economic behavior that arises directly from a mathematically rigorous formulation of learnable ambiguity. The fact that the value function is no longer of the simple CRRA form is a feature, not a bug, of the model, highlighting that even simple, theoretically-sound ambiguity models can profoundly alter the structure of classical solutions and, crucially, can be connected to data via a principled learning framework.
\section{Conclusion}
\label{sec:conclusion}

In this paper, we have laid the mathematical groundwork for \textbf{Measure Learning}, a new framework for modeling ambiguity by learning non-linear expectations directly from data. Our approach is centered on the concept of Neural Expectation Operators, defined via BSDEs with neural network drivers. Our central contribution is a well-posedness theorem (Theorem \ref{thm:wellposedness_quadratic}) for such BSDEs under quadratic growth conditions. The primary innovation is the creation of a constructive bridge between abstract theory and machine learning practice: we have demonstrated that the severe assumptions of the governing mathematical theory (e.g., local Lipschitz continuity, monotonicity, quadratic growth) can be satisfied by concrete and practical neural network architectures, thus providing a sound theoretical underpinning for using expressive, data-driven models of ambiguity. We have further demonstrated that key axiomatic properties can be guaranteed by architectural design, and we have extended the theory to fully coupled FBSDEs and large-scale mean-field systems.

This work constitutes the first step in a broader research program on Measure Learning. Our results provide a proof of concept that expressive, data-driven models of ambiguity can be constructed with full mathematical rigor. However, our reliance on the monotonicity of the driver with respect to the $y$ variable (Assumption \ref{ass:f_theta_regularity_general}(iv)), which is instrumental for the use of comparison principles, remains a significant structural assumption. A primary direction for future research is to relax this condition. This is a considerable challenge, as comparison theorems are the cornerstone of most a priori estimates in the theory of quadratic BSDEs, not just in our neural setting. Progress may require entirely different techniques, such as fixed-point arguments based on the Schauder-Tychonoff theorem on different function spaces, or perhaps variational methods that interpret the BSDE solution as the value function of a stochastic control problem, where uniqueness might be established under alternative conditions.

Further investigation into the specific classes of non-linear expectations that can be efficiently approximated by these architectures and the development of robust numerical schemes grounded in this theory remain important open problems for this nascent field.

\bibliography{reference}
\bibliographystyle{imsart-nameyear} 
\end{document}